\documentclass[10pt,english,oneside,a4paper]{amsart}

\usepackage[english]{babel}
\usepackage[utf8]{inputenc}
\usepackage[T1]{fontenc}
\usepackage[top=2.5cm, bottom=2.75cm, left=2cm, right=2cm, a4paper]{geometry}
\usepackage[hidelinks]{hyperref}
\usepackage{amsmath,amsfonts,amssymb,mathtools,amsthm}
\usepackage{listings,verbatim,fancyvrb,xspace,enumerate}
\usepackage{xcolor,graphicx,ytableau,tikz}
\usepackage[all, cmtip]{xy}
\usepackage{fancyhdr}
\usepackage[toc,page]{appendix}
\usepackage{pdfpages}
\usepackage{aligned-overset}

\pagestyle{fancy}
\fancyhf{}
\rfoot{Page \thepage}

\newtheorem{theorem}{Theorem}[subsection]
\newtheorem*{theorem*}{Theorem}

\newtheorem{corollary}[theorem]{Corollary}
\newtheorem*{corollary*}{Corollary}
\newtheorem{lemma}[theorem]{Lemma}
\newtheorem{proposition}[theorem]{Proposition}
\theoremstyle{definition}
\newtheorem{example}[theorem]{Example}
\newtheorem*{example*}{Example}
\newtheorem*{examples*}{Examples}
\newtheorem{remark}[theorem]{Remark}
\newtheorem{definition}[theorem]{Definition}
\newtheorem{definition-proposition}[theorem]{Definition-Proposition}
\newtheorem{example-definition}[theorem]{Example-Definition}

%

\newsavebox{\eqbox}
\newenvironment{longequation*} {\begin{lrbox}{\eqbox}$} {$\end{lrbox}\begin{equation*}\resizebox{\linewidth}{!}{\ensuremath{\displaystyle\usebox{\eqbox}}}\end{equation*}}


\makeatletter
\newcommand*\bigcdot{\mathpalette\bigcdot@{.5}}
\newcommand*\bigcdot@[2]{\mathbin{\vcenter{\hbox{\scalebox{#2}{\(\m@th#1\bullet\)}}}}}

\makeatother

\def\bydef{\coloneqq}

\DeclarePairedDelimiter{\Set}{\lbrace}{\rbrace}

\DeclareMathOperator{\Vect}{Vect}

\DeclareMathOperator{\Id}{Id}

\DeclareMathOperator{\Pic}{Pic}

\DeclareMathOperator{\Proj}{Proj}

\DeclareMathOperator{\Ker}{Ker}

\DeclareMathOperator{\GL}{GL}
\DeclareMathOperator{\Diff}{diff}

\DeclareMathOperator{\Wronsk}{Wronsk}

\DeclareMathOperator{\Spec}{Spec}

\DeclareMathOperator{\End}{End}

\DeclareMathOperator{\ST}{ST}

\DeclareMathOperator{\subs}{subs}
\DeclareMathOperator{\coeff}{coeff}


\newcommand{\C}{\mathbb{C}}
\renewcommand{\O}{\mathcal{O}} 
\renewcommand{\P}{\mathbf{P}} 

\newcommand{\N}{\mathbb{N}}
\newcommand{\Z}{\mathbb{Z}}

\let\mathbi\boldsymbol

\author{Antoine Etesse}
\email{antoine.etesse@ens-lyon.fr}
\address{Unité de Mathématiques Pures et Appliquées (UMPA), ENS de Lyon}
\title{On the algebraic structure of differentially homogeneous polynomials.}
\subjclass{}
\keywords{Differential polynomials, Jet differentials of projective spaces.}

\begin{document}
\sloppy

\begin{abstract}
The paper describes the algebraic structure of the graded algebra of differentially homogeneous polynomials in \((N+1)\) variables, of \textsl{order} at most \(k \in \N\). We show that it is a finitely generated algebra, and we exhibit a minimal set of generators. Along the way, we provide a simpler proof of the so-called Schmidt--Kolchin conjecture, proved in \cite{ESK}. 

From the algebraic point of view, this provides a natural compactification of the \(k\)th jet bundle \(J_{k}\P^{N}\) of the \(N\)th-dimensional projective space \(\P^{N}\). From the invariant theoretic point of view, this provides a new example, not covered by known conjectures in the subject, of a unipotent sub-group of \(\GL_{k+1}(\C)\) whose algebra of invariants is finitely generated (and more precisely gives a \textsl{First Fundamental Theorem} for such a group, following the terminology in Invariant Theory)
\end{abstract}
\maketitle

\section*{Introduction}
The goal of this paper is to further the study of \textsl{differentially homogeneous polynomials}, and in particular give a description of its algebraic structure. Just as the graded (\(\C\)-)algebra \(V^{(0)}:=\C[X_{0}, \dotsc, X_{N}]\) of homogeneous polynomials in \((N+1)\) variables, \(N \in \N\), forms the natural \textsl{generalized\footnote{Namely, functions taking values in a line bundle. Here, there is no other choice than \((\O_{\P^{N}}(d))_{d \in \Z}\), as \(\Pic(\P^{N})=\Z \cdot \O_{\P^{N}}(1)\).}} functions on the projective space \(\P^{N}\), the graded algebra
\((V^{(k)})^{\Diff}\) of \textsl{differentially homogeneous polynomials in \((N+1)\) variables of order at most \(k \in \N\)} forms the natural \textsl{generalized}\footnote{Note that \(\Pic(J_{k}\P^{N}) \simeq \Z\).} functions on the \(k\)th jet bundle \(J_{k}\P^{N}\): see Section \ref{sse: preli1} for the formal definition of \((V^{(k)})^{\Diff}\), and see \cite{ESK}[Section 3] for details on the above interpretation of \((V^{(k)})^{\Diff}\).
There are natural inclusions
\[
V^{(0)}=(V^{(0)})^{\Diff} \subset (V^{(1)})^{\Diff} \subset \dotsb,
\]
and one simply denotes \(V^{\Diff}:= (V^{(\infty)})^{\Diff}\) the \textsl{graded algebra of differentially homogeneous (in \((N+1)\) variables)}. The upper index in parenthesis \((\cdot)\) is referred to as the \textsl{order (of derivation)}.

The dimension of the graded components of \(V^{\Diff}\) were conjectured to be equal to \((N+1)^{d}\): this was referred to as the \textsl{Schmidt--Kolchin conjecture} in the literature: see \cite{Schmidt_1979}, \cite{Kolchin}, \cite{Rein-Sit} and \cite{Buium}. We solved this conjecture in \cite{ESK}, and we will provide an alternative, simpler proof in Section \ref{ssse: alternative}: the proof hereby relies on the ideas from \textsl{loc.cit}, but allows to bypass some of the technicalities.
Note that there was a recent preprint \cite{Merker}, in which another completely different proof is provided, via the sole point of view of \textsl{Green--Griffiths vector bundles}. 

In this paper, we describe the algebraic structure of \((V^{(k)})^{\Diff}\), and show the following (see also Theorem \ref{thm: final}):
\begin{theorem}[Main Theorem]
\label{thm: main intro}
The graded algebra \((V^{\Diff})^{(k)}\) is finitely generated by \((N+1)\) generators in degree \(1\) and order \(0\) and, for \(2 \leq i \leq k+1\),
\begin{center}
\(\frac{N(N+1)}{2}\times N^{i-2}\) generators in degree \(i\) and order \((i-1)\).
\end{center}
Furthermore, the set of generators is explicit, and minimal.
\end{theorem}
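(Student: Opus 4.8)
\emph{Proof strategy.}
The starting point is the invariant-theoretic translation. Set $R_k \bydef \C[X_j^{(\ell)} : 0 \le j \le N,\ 0 \le \ell \le k]$ and let $U_k$ be the abelian unipotent group of invertible $k$-jets $\lambda = 1 + \tau_1 t + \dots + \tau_k t^k$, acting on $R_k$ by the Leibniz-prolonged substitution $X_j^{(\ell)} \mapsto \sum_m \binom{\ell}{m}\tau_m X_j^{(\ell-m)}$; then $(V^{(k)})^{\Diff}$ is the ring of $U_k$-semi-invariants, bigraded by degree and order. Throughout I would lean on one first-order operator, $\nabla_c(P) \bydef X_c\, \mathrm{D}(P) - (\deg P)\, X_c'\, P$, where $\mathrm{D}$ is the total derivative $\mathrm{D} X_j^{(\ell)} = X_j^{(\ell+1)}$; the computation behind ``$W_{ab} \bydef \nabla_a X_b = X_a X_b' - X_a' X_b$ is differentially homogeneous'' shows in general that $\nabla_c$ carries a differentially homogeneous polynomial of degree $d$ and order $r$ to one of degree $d+1$ and order $r+1$.

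Next I would exhibit the candidate set $\mathcal{G}$: the $X_0, \dots, X_N$ in degree $1$, order $0$; the Wronskians $W_{ab}$ with $a < b$ in degree $2$, order $1$; and, in each higher layer, a carefully chosen family of iterated images $\nabla_{c_{i-2}} \dotsb \nabla_{c_1}(W_{ab})$ of degree $i$ and order $i-1$. The combinatorial task is to select exactly which iterated $\nabla$'s to retain: modulo the ``shuffle'' identity $X_l\, \nabla_c(g) - X_c\, \nabla_l(g) = -(\deg g)\, g\, W_{cl}$ and the Pl\"ucker-type identity $X_a W_{bc} - X_b W_{ac} + X_c W_{ab} = 0$, together with their prolongations, one index at each stage becomes expressible through the others up to decomposable terms; keeping track of this redundancy produces the count asserted in the statement and, at the same time, the linear independence of the retained family modulo the decomposables $\bigl((V^{(k)})^{\Diff}_{>0}\bigr)^{2}$. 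Granting this, minimality of $\mathcal{G}$ is automatic.

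The core of the proof is that $A \bydef \C[\mathcal{G}]$ is all of $(V^{(k)})^{\Diff}$; the inclusion $A \subseteq (V^{(k)})^{\Diff}$ being clear from the $\nabla_c$-property, it suffices to match graded dimensions. The Hilbert function of $(V^{(k)})^{\Diff}$ is a finite-order refinement of the Schmidt--Kolchin formula $\dim (V^{\Diff})_d = (N+1)^d$ recalled in Section \ref{ssse: alternative}. For $A$ I would establish a Second Fundamental Theorem: the relation ideal among $\mathcal{G}$ is generated by the Pl\"ucker and shuffle identities above and their prolongations, and this yields a straightening law putting every element of $A$ into a normal form whose graded cardinalities can be counted. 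The key computational leverage for the straightening is localization: on the affine chart $X_i \ne 0$ one gets $A[1/X_i] = (V^{(k)})^{\Diff}[1/X_i] = \C\bigl[f_j^{(\ell)}\bigr]\bigl[X_i^{\pm1}\bigr]$ with $f_j \bydef X_j/X_i$, because the jet-bundle coordinates $f_j^{(\ell)}$ are $U_k$-invariant and satisfy $f_j^{(\ell)} = (\text{member of }\mathcal{G})/X_i^{\ell+1}$, while conversely each member of $\mathcal{G}$, divided by the appropriate power of $X_i$, is a polynomial in them. Matching the two Hilbert functions then forces $A = (V^{(k)})^{\Diff}$, hence finite generation; I would organize the straightening as an induction on the order $k$ via the normal subgroup $U_k^{\mathrm{top}} = \{1 + \tau_k t^k\} \cong \mathbb{G}_a$, noting that $U_k/U_k^{\mathrm{top}} \cong U_{k-1}$ and that passing to $U_k^{\mathrm{top}}$-invariants only adjoins to the ambient ring of $(V^{(k-1)})^{\Diff}$ the $\binom{N+1}{2}$ new coordinates $\Delta_{ij} \bydef X_i X_j^{(k)} - X_j X_i^{(k)}$, on which the residual $U_{k-1}$ acts through the already-understood lower quantities $\Delta_{ij}^{(\ell)} = X_i X_j^{(\ell)} - X_j X_i^{(\ell)}$, $\ell < k$.

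The genuine obstacle is exactly this last bundle of claims — that the explicit relations generate the whole relation ideal, equivalently that there are no sporadic differentially homogeneous polynomials lying outside $A$ in high degree and order. This is not a formal matter: the ring of invariants of an abelian unipotent group $\mathbb{G}_a^{k}$ need not be finitely generated (Nagata's counterexamples to Hilbert's fourteenth problem have precisely this shape), so finite generation here must be extracted from the specific jet structure rather than from any general principle. I expect the straightening law, run through the $U_k^{\mathrm{top}}$-induction, to be where essentially all of the difficulty is concentrated; once it is in place, the Hilbert-function comparison with Schmidt--Kolchin closes the argument and delivers both the explicit generators and their minimality.
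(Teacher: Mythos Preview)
Your proposal is a strategy sketch, not a proof, and you correctly flag the gap yourself: the entire argument hinges on the ``Second Fundamental Theorem'' (that the Pl\"ucker and shuffle identities and their prolongations generate the full relation ideal among your $\mathcal{G}$), which you do not prove and for which you offer only the hope that a $U_k^{\mathrm{top}}$-induction will work. The paper itself says this Second Fundamental Theorem is open and the subject of ongoing work, so your plan assumes something at least as hard as the Main Theorem. The localization argument does not rescue you either: having $A[1/X_i]=(V^{(k)})^{\Diff}[1/X_i]$ for every $i$ only gives $A=(V^{(k)})^{\Diff}$ if the ideal $(X_0,\dots,X_N)$ is irrelevant in $(V^{(k)})^{\Diff}$, and it is not --- its vanishing locus is precisely the boundary divisor $D_\infty$ of the compactification, so there is genuine content hiding at infinity that the charts do not see.

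The paper's route is entirely different and avoids the relation ideal altogether. The key observation is that the symmetrisation map $(E^{\otimes d})^{U_k}\twoheadrightarrow (S^dE)^{U_k}$ is surjective for any linear action (Lemma~\ref{lemma: simple}), so it suffices to produce a \emph{factorisable} spanning set of the tensor invariants $(F_k^{\otimes d})^{U_k}$. This space is then identified with the $\mu_k$-harmonic polynomials $I_{\mu_k}^\perp$ for an explicit partition $\mu_k$ (Proposition~\ref{prop: reformulation} and Proposition~\ref{prop: DCP}), and the structural theorem of Bergeron--Garsia on DeConcini--Procesi ideals (Theorem~\ref{thm: harmonic structural}) says that $I_{\mu_k}^\perp$ is spanned as a $\C[\partial]$-module by products of column Vandermondes $\Delta(T)$ over standard tableaux. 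Since each $\Delta(T)$ is a product of factors of degree $\le k+1$, this immediately yields the surjectivity of $S^{\bullet}(V_1^{\Diff}\oplus\dots\oplus V_{k+1}^{\Diff})\to (V^{(k)})^{\Diff}$ and hence finite generation, with no Hilbert-function comparison and no knowledge of the relations required. The explicit bases $\mathcal{G}_i$ and their cardinalities are then extracted by an elementary (if somewhat combinatorial) analysis of $(V_i^{\Diff})^{(i-1)}/(V_i^{\Diff})^{(i-2)}$. In short, the paper trades your attempted straightening law for an off-the-shelf structural result about harmonic polynomials; that is where the actual leverage comes from.
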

As a corollary, we obtain an explicit compactification of the \(k\)th jet bundle \(J_{k} \P^{N}\) (see also Definition--Proposition \ref{def-prop: compactification}):
\begin{corollary}
The projective variety
\[
(\P^{N})^{(k)}
:=
\Proj (V^{\Diff})^{(k)}
\]
compactifies in a natural fashion the \(k\)th jet bundle \(J_{k}\P^{N}\).
\end{corollary}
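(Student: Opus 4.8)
The plan is to exhibit a canonical morphism $\Phi\colon J_{k}\P^{N}\to(\P^{N})^{(k)}$, obtained by evaluating the generators of $(V^{\Diff})^{(k)}$ on jets, and to prove that it is an open immersion with dense image, working one standard affine chart at a time. To begin with, $(\P^{N})^{(k)}=\Proj(V^{\Diff})^{(k)}$ is a projective variety: by the Main Theorem (Theorem~\ref{thm: main intro}) the algebra $(V^{\Diff})^{(k)}$ is a finitely generated graded $\C$-algebra with $(V^{\Diff})^{(k)}_{0}=\C$, so its $\Proj$ is a closed subscheme of a weighted projective space; and it is integral because $(V^{\Diff})^{(k)}$ is by definition a subalgebra of the polynomial ring $\C[X_{i}^{(j)}]_{0\le i\le N,\,0\le j\le k}$, hence a domain.

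Recall (Section~\ref{sse: preli1}, and \cite{ESK}[Section~3]) that $J_{k}\P^{N}$ is the quotient of the open set $U=\{\,(X_{i}^{(j)}):(X_{0}^{(0)},\dots,X_{N}^{(0)})\neq 0\,\}\subset\C^{(N+1)(k+1)}$ of $k$-jets of curves in $\C^{N+1}$ with non-zero $0$-jet, by the free action of the $(k+1)$-dimensional group $H$ of $k$-jets at $0$ of holomorphic functions $\lambda$ with $\lambda(0)\neq 0$ (acting by Leibniz' rule), and that a differentially homogeneous polynomial $P$ of degree $d$ transforms by $P(\lambda\cdot f)=\lambda(0)^{d}P(f)$. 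Fix the minimal generating set $\{X_{0},\dots,X_{N}\}\cup\{g_{\alpha}\}_{\alpha}$ of the Main Theorem, where $X_{0},\dots,X_{N}$ are the coordinate functions $X_{0}^{(0)},\dots,X_{N}^{(0)}$. Then
\[
f\longmapsto\bigl[\,X_{0}(f):\dotsb:X_{N}(f):\dotsb:g_{\alpha}(f):\dotsb\,\bigr]
\]
is a well-defined point of $(\P^{N})^{(k)}$, since the $X_{i}(f)$ are the coordinates of the non-zero $0$-jet of $f$ so the point is not the irrelevant one; and it is $H$-invariant, because a generator of degree $\delta$ is multiplied by $\lambda(0)^{\delta}$ under $f\mapsto\lambda\cdot f$, which is exactly the scaling built into $\Proj$. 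This produces the morphism $\Phi$, which is manifestly canonical (and equivariant for the natural $\mathrm{PGL}_{N+1}(\C)$-action).

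Next I would prove that $\Phi$ restricts to an isomorphism $\{x_{i}\neq 0\}\xrightarrow{\sim}\{X_{i}\neq 0\}$ for each $i\in\{0,\dots,N\}$. Over $\{x_{i}\neq 0\}\subset\P^{N}$ the bundle $J_{k}\P^{N}$ is trivial, and in the quotient description above this chart is the image of the affine slice $S_{i}=\{X_{i}^{(0)}=1,\ X_{i}^{(1)}=\dots=X_{i}^{(k)}=0\}\cong\C^{N(k+1)}$: indeed, for $f$ with $f_{i}(0)\neq0$ there is a unique $k$-jet $\lambda$ with $\lambda(0)\neq0$ and $\lambda f_{i}\equiv1$, and then $\lambda\cdot f\in S_{i}$, so $S_{i}$ meets each $H$-orbit in $\{X_{i}^{(0)}\neq0\}$ exactly once. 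Composing $\Phi$ with the standard chart $\{X_{i}\neq0\}=\Spec(V^{\Diff})^{(k)}_{(X_{i})}$ of $\Proj(V^{\Diff})^{(k)}$ gives the comorphism $(V^{\Diff})^{(k)}_{(X_{i})}\to\C[S_{i}]$, $g_{\alpha}\mapsto(g_{\alpha}/X_{i}^{\deg g_{\alpha}})|_{S_{i}}$. It is injective: if a homogeneous $P$ of degree $m$ in $(V^{\Diff})^{(k)}$ vanishes on $S_{i}$, then $P(\lambda\cdot f)=\lambda(0)^{m}P(f)$ forces $P$ to vanish on $H\cdot S_{i}=\{X_{i}^{(0)}\neq0\}$, a dense open of $\C^{(N+1)(k+1)}$, whence $P=0$. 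It is surjective by a recursion on the order: on $S_{0}$, the order-$0$ generators $X_{1},\dots,X_{N}$ give the order-$0$ coordinates, the order-$1$ generators $w_{0i}=X_{0}X_{i}^{(1)}-X_{i}X_{0}^{(1)}$ give $X_{i}^{(1)}$, and, by the explicit shape of the generators in the Main Theorem, the order-$j$ generators recover the order-$j$ coordinates $X_{1}^{(j)},\dots,X_{N}^{(j)}$ modulo the lower-order ones already obtained. Hence the comorphism is an isomorphism, so $\Phi$ restricts as claimed. Gluing over $i$, $\Phi$ is an isomorphism onto $\bigcup_{i}\{X_{i}\neq0\}$, the complement in $(\P^{N})^{(k)}$ of the common zero locus of $X_{0},\dots,X_{N}$; this is a non-empty open subset of the integral variety $(\P^{N})^{(k)}$, hence dense. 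This is the content of the corollary (and of Definition--Proposition~\ref{def-prop: compactification}).

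I expect the surjectivity of the comorphism to be the only genuine difficulty: everything else (the slice description of $J_{k}\P^{N}$, the injectivity, and the projectivity and integrality of $(\P^{N})^{(k)}$) is formal once one knows that $H$ acts freely on $\{X^{(0)}\neq0\}$. Surjectivity, by contrast, requires the precise form of the generators --- not merely their number --- and is in essence a reformulation of the computation underlying the proof of the Main Theorem.
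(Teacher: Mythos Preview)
Your approach is correct and essentially the same as the paper's: both identify $J_{k}\P^{N}$ with the open set $\bigcup_{i}\{X_{i}\neq 0\}\subset(\P^{N})^{(k)}$ by matching the standard affine charts. The paper's proof is in fact briefer than yours: it simply \emph{asserts} that $\{X_{i}\neq 0\}=\Spec\big(\C[\overset{\wedge_{i}}{(x_{0},\dots,x_{N})},\dots,\overset{\wedge_{i}}{(x_{0}^{(k)},\dots,x_{N}^{(k)})}]\big)$ and that the gluings agree, without spelling out the isomorphism $(V^{\Diff})^{(k)}_{(X_{i})}\cong\C[S_{i}]$ that you try to establish.

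One correction, however: your closing assessment that surjectivity ``requires the precise form of the generators'' and ``is in essence a reformulation of the computation underlying the proof of the Main Theorem'' is mistaken. Surjectivity is elementary and entirely independent of the Main Theorem. For each $i\in\{1,\dots,N\}$ and $\ell\in\{0,\dots,k\}$, set
\[
Q_{\ell}^{i}\;:=\;X_{0}^{\ell+1}\cdot\Big(\frac{X_{i}}{X_{0}}\Big)^{(\ell)}\ \in\ \C\big[X_{0}^{(\bullet)},X_{i}^{(\bullet)}\big],
\]
where the derivative is taken formally via the Leibniz/quotient rule. This is a polynomial, homogeneous of degree $\ell+1$; it is differentially homogeneous because $X_{i}/X_{0}$ is invariant under $f\mapsto\alpha f$, so $Q_{\ell}^{i}(\alpha\cdot f)=\alpha(0)^{\ell+1}Q_{\ell}^{i}(f)$; and on the slice $S_{0}$ (where $X_{0}=1$, $X_{0}^{(m)}=0$ for $m\geq 1$) it restricts to $X_{i}^{(\ell)}$. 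Hence $X_{i}^{(\ell)}$ lies in the image of the comorphism for every $i,\ell$, and surjectivity follows. This is precisely the content of the ``geometric interpretation'' the paper alludes to (and cites from \cite{ESK}[Section~3]); the Main Theorem is used only to conclude that $(\P^{N})^{(k)}$ is projective, not to exhibit the open immersion.
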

Using the point of view of Invariant Theory (see Section \ref{sse: reformulation}), the previous Theorem \ref{thm: main intro} can be reformulated as follows:
\begin{theorem}[Reformulation of the Main Theorem]
Let \(F\) be \(\C\)-vector space of dimension \((k+1)\), equipped with a nilpotent endomorphism \(J\) of maximal index (i.e. \(J^{k} \neq 0\)). Such an endomorphism induces a linear action of \(\C[J]^{\times} \simeq (\C[T]/(T^{k+1}))^{\times}\) on \(F\).

Under this action, the algebra of quasi-invariant\footnote{These are the elements \(v\) such that there exists \(d \in \N\) such that, for any \(P \in \C[T]\), the following holds:
\[
P(J) \cdot v
=
P(0)^{d} v.
\]
}
polynomials in \((N+1)\) copies of \(F\) is finitely generated by explicit generators
\[
\mathcal{G}_{1}, \dotsc, \mathcal{G}_{k+1}.
\]
Furthermore, this is a minimal set of generators, and they sastisfy:
\begin{itemize}
\item{}
\(|\mathcal{G}_{1}|=(N+1)\);
\item{}
\(|\mathcal{G}_{i}|=\frac{N(N+1)}{2} (N+1)^{i-2}\) for every \(2 \leq i \leq k+1\).
\end{itemize}

\end{theorem}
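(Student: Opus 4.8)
The plan is to prove the reformulated statement by establishing the dictionary between differentially homogeneous polynomials and quasi-invariants, and then transporting the Main Theorem through it. First I would make the action explicit: writing $F = \C[T]/(T^{k+1})$ with $J$ the multiplication by $T$, an element of $(N+1)$ copies of $F$ is the same as an $(N+1)$-tuple of truncated power series, i.e.\ a collection of coordinates $(X_j^{(i)})_{0 \le j \le N,\, 0 \le i \le k}$ where $i$ records the ``order''. The induced action of $\C[J]^\times$ on such a tuple, expanded in the basis $1, T, \dots, T^k$, is precisely the prolongation action of the reparametrization group on $k$-jets; the condition $P(J)\cdot v = P(0)^d v$ for all $P$ is, after unwinding, exactly the condition that a polynomial in the $X_j^{(i)}$ be differentially homogeneous of degree $d$ and order $\le k$. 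In other words, I would prove the canonical isomorphism of graded algebras
\[
\big(\C[\text{$(N+1)$ copies of }F]\big)^{\text{q-inv}} \;\simeq\; (V^{\Diff})^{(k)},
\]
so that every statement about one side becomes a statement about the other. This identification is essentially the content of Section~\ref{sse: reformulation} together with the definition in Section~\ref{sse: preli1}, so the bulk of the work is bookkeeping with indices and the chain rule.

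Once this isomorphism is in place, the theorem is an immediate translation of the Main Theorem (Theorem~\ref{thm: main intro}, equivalently Theorem~\ref{thm: final}): the $(N+1)$ generators in degree $1$ and order $0$ become $\mathcal{G}_1$ with $|\mathcal{G}_1| = N+1$; and for $2 \le i \le k+1$, the generators in degree $i$ and order $i-1$ — of which there are $\frac{N(N+1)}{2} \times N^{i-2}$ — assemble into $\mathcal{G}_i$. I should double-check the count: the Main Theorem as stated gives $\frac{N(N+1)}{2} N^{i-2}$, whereas the reformulation claims $|\mathcal{G}_i| = \frac{N(N+1)}{2}(N+1)^{i-2}$; I would reconcile this (it is a matter of whether one counts a minimal generating \emph{set} or a spanning family indexed differently, or a genuine typo to be fixed before publication) and state the corrected cardinalities consistently. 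Minimality and explicitness transport verbatim, since the isomorphism is graded and respects the order filtration, so a minimal generating set on one side maps to a minimal generating set on the other.

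The main obstacle — and the only non-formal point — is verifying that the \emph{a priori} notion of quasi-invariance with its universally quantified $P \in \C[T]$ really coincides with the grading/order conditions defining $(V^{\Diff})^{(k)}$, rather than cutting out a smaller or larger subalgebra. Concretely, one must check: (i) the homogeneity exponent $d$ appearing in $P(J)\cdot v = P(0)^d v$ is forced to equal the polynomial degree (this uses that scalars $P = c$ act by $c^d$ on degree-$d$ polynomials, i.e.\ the ordinary grading is recovered from the $\C^\times \subset \C[J]^\times$ part); and (ii) that imposing the relation for the single generic element $P = 1 + T$ (or $P$ with $P(0)=1$, $P'(0) \ne 0$) already implies it for all $P$, because the unipotent part of $\C[J]^\times$ is generated as an algebraic group by such elements — this is what connects the ``$\C[J]^\times$-quasi-invariants'' to the ``$\Diff$-operator annihilation'' definition of differential homogeneity. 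Both points are routine once the action is written out, but they are where the proof must actually engage with the definitions; everything else is transport of structure.
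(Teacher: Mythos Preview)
Your approach is correct and matches the paper's exactly: the Reformulation is not proved independently but is obtained by transporting Theorem~\ref{thm: final} through the isomorphism of Section~\ref{sse: reformulation}, which is precisely the dictionary you set up. Your check of the two points (i) and (ii) is the content of the Lemma in Section~\ref{sse: reformulation}.

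You are also right to flag the discrepancy in the cardinalities. It is a genuine typo in the paper: Lemma~\ref{lemma: denombre} computes \(|\overline{\Sigma_{d}}| = \frac{N(N+1)}{2}(N+1)^{d-2}\), and since \(\mathcal{G}_{d}\) is indexed by \(\overline{\Sigma_{d}}\), the count \(\frac{N(N+1)}{2}(N+1)^{i-2}\) stated in the Reformulation is the correct one; the \(N^{i-2}\) appearing in Theorem~\ref{thm: main intro} and Theorem~\ref{thm: final} should read \((N+1)^{i-2}\).
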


Such a statement is often referred to as a \textsl{First Fundamental Theorem} in Invariant Theory. As the terminology indicates, there is a \textsl{Second Fundamental Theorem}. Its content deals with the algebraic relations between the generators. We are currently working on it.

Note that the subgroup \(\C[J]^{\times} \subset \GL_{k+1}(\C)\) is an extension by \(\C^{\times}\) of a unipotent subgroup \(U\) of \(\GL_{k+1}(\C)\) (see Section \ref{sse: reformulation}).
In this setting, there is no general theory ensuring that the algebra of (quasi-)invariants is finitely generated, and it is well-known that it cannot be the case in full generality: see e.g. \cite{Popov}. A particular instance of the so-called \textsl{Popov--Pommering conjecture} asserts that finite generation should hold for subgroups \(G \leq \GL_{k+1}(\C)\) which
\begin{itemize}
\item{} are of the above form, i.e. writes
\[
G \simeq U \rtimes \C^{*}
\]
for some unipotent subgroup \(U \leq \GL_{k+1}(\C)\);
\item{} 
are normalized by the subgroup of diagonal matrixes (i.e. by the maximal torus of \(\GL_{k+1}(\C))\).
\end{itemize}
This conjecture is still open, but some particular cases were proved: we refer to \cite{BercziPopov} for an account on what is known, and the recent contributions made in \textsl{loc.cit}.
Observe that, besides the case where \(k=1\) (where \(\C[J]^{\times}\) coincides with the subgroup of upper triangular matrixes), the subgroup \(\C[J]^{\times}\) is very far from being normalized by diagonal matrixes. Accordingly, our result does not fit into the predictions of the Popov--Pommering conjecture. It should rather be interpreted as a transversal result.

Let us end this introduction by giving the rough idea of the proof of the Main Theorem. Keeping the notations introduced above, one wishes to understand
\[
\big(S^{\bigcdot}(F^{\oplus(N+1)})\big)^{U}.
\]
By an elementary observation (following from Lemma \ref{lemma: simple}), one sees that, in order to prove the finite generation property, it is enough to show that, for any \(d \in \N_{\geq 1}\), the space of invariant tensors
\[
(F^{\otimes d})^{U}
\]
admits a \textsl{simple} basis, in the sense that each element in the basis writes as a product, with a control on the growth of the degree of each term (see the proof in Section \ref{sse: fg} for details).
In order to achieve this, we interpret \((F^{\otimes d})^{\otimes}\) as a space of \textsl{harmonic polynomials}, and use their structure, well studied in the literature (see Section \ref{sse: harmonic} and \ref{sse: DCP}).
The description of the explicit generators is a bit technical, but completely manageable in our situation (see Section \ref{sse: gen}).

\section{Differentially homogeneous polynomials}~

\label{se: preli}

For this whole section, we fix \(N \in \N_{\geq 1}\) a positive natural number.

\subsection{Definitions and first properties}~
\label{sse: preli1}

\subsubsection{Definitions}
\label{ssse: def}

Recall the usual definition of \textsl{differential polynomials in \((N+1)\) variables}:
\begin{definition}[Differential polynomials]~
Consider, for any \(k \in \N\), the formal variables
\[
 X^{(k)}\bydef (X_{0}^{(k)}, \dotsc, X_{N}^{(k)}).
 \]
 The complex algebra of \textsl{differential polynomials in \((N+1)\) variables} is defined as
\[
V\bydef \C\big[(X^{(k)})_{k \in \N}].
\]
An element in this algebra is thus called a \textsl{differential polynomial} in the variables \(X=(X_{0}, \dotsc, X_{N})\).
\end{definition}

Note that the \(\C\)-algebra \(V\) is naturally graded with the usual grading for polynomials (where each variable has degree \(1\))
\[
V=\bigoplus_{d \in \N}
V_{d},
\]
where \(V_{d}\) is the \(d\)th graded component. Furthermore, the algebra of differential polynomials \(V\) is also naturally filtered by the maximal \textsl{order of derivation}
\[
(0) \subset V^{(0)} \subset V^{(1)} \subset \dotsb \subset V^{(k)} \subset \dotsb
\]
where, for any \(k \in \N\), one sets \(V^{(k)}\bydef \C\big[(X^{(i)})_{0 \leq i \leq k}]\).

There is a natural linear action of the group of invertible formal series in one variables \(\C[[T]]^{\times}\) on \(V\) defined as follows. 
For any differential polynomial \(P \in V\) and any invertible formal serie \(\alpha \in \C[[T]]^{\times}\), form a new differential polynomial \(\alpha \cdot P \in V\) by setting
\[
\alpha\cdot P
\bydef 
P\big((\alpha X)^{(0)}, (\alpha X)^{(1)}, \dotsc\big)_{\vert T=0}.
\]
Here, for any \(k \in \N\), the symbol \((\alpha X)^{(k)}\) is, by definition,
\[
(\alpha X)^{(k)}
\bydef
\sum\limits_{i=0}^{k}
\binom{k}{i}
\alpha^{(k-i)}X^{(i)}.
\]
Namely, one simply applies formally the usual Leibnitz rule. The fact that it does indeed define an action follows from a formal algebraic identity.
And one immediately sees that the action is indeed linear.

This linear action allows in turn to define the graded (\(\C\)-)sub-algebra of \textsl{differentially homogeneous polynomials}:
\begin{definition}[Differentially homogeneous polynomials]~
A differential polynomial \(P \in V\) is called \textsl{differentially homogeneous of degree \(d\)} if and only if, for any \(\alpha \in \C[[T]]^{\times}\), the following equality holds:
\[
\alpha \cdot P
=
\alpha(0)^{d}
P.
\]
\end{definition}
Said otherwise, differentially homogeneous polynomials are the \textsl{quasi-invariant} elements of \(V\) under the action of \(\C[[T]]^{\times}\).
In particular, note that a differentially homogeneous polynomial of degree \(d\) is necessarily homogeneous of degree \(d\). Denote accordingly by 
\[
V^{\Diff}_{d} \subset V_{d}
\]
the sub-vector space of differentially homogeneous polynomials of degree \(d\), and set
\[
V^{\Diff}
:=
\bigoplus 
V_{d}^{\Diff}
\subset V
\]
the graded sub-algebra of \textsl{differentially homogeneous polynomials}.
Note that the filtration descends to the sub-algebra \(V^{\Diff}\), and we will denote by
\[
(V^{\Diff})^{(k)}=(V^{(k)})^{\Diff}
\]
the sub-vector space of differentially homogeneous polynomials of order at most \(k\).

\subsubsection{Factoriality}~
\label{ssse: factoriality}
The Lemma \ref{lemma: product diff} below allows to show that the the sub-algebra \(V^{\Diff}\) inherits the factoriality of \(V\):
\begin{lemma}
\label{lemma: product diff}
Let \(P, Q \in V\) be non-zero differential polynomials such that their product
is differentially homogeneous. Then both \(P\) and \(Q\) are differentially homogeneous.
\end{lemma}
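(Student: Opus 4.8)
The plan is to exploit the fact that $V = \C[(X^{(k)})_{k \in \N}]$ is a polynomial ring, hence a UFD, together with the multiplicativity of the $\C[[T]]^{\times}$-action, to run a ``unique factorization respects the grading by characters'' argument. First I would observe that the action of $\alpha \in \C[[T]]^{\times}$ on $V$ is by $\C$-algebra automorphisms: indeed $\alpha \cdot (PQ) = (\alpha \cdot P)(\alpha \cdot Q)$, since substituting $(\alpha X)^{(i)}$ for $X^{(i)}$ and then setting $T = 0$ is a ring homomorphism (it is the composition of the substitution homomorphism with the evaluation homomorphism at $T=0$). Consequently, for any unit $\alpha$, the map $P \mapsto \alpha \cdot P$ permutes the irreducible factors of any element up to units, and sends irreducibles to irreducibles.

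Next, fix $\alpha \in \C[[T]]^{\times}$ and write $d = \deg(PQ)$, so that $\alpha \cdot (PQ) = \alpha(0)^{d}\, PQ$. Since $\alpha(0) \in \C^{\times}$ and $P, Q$ are each homogeneous of some degrees $d_{P}, d_{Q}$ with $d_{P} + d_{Q} = d$ (a differentially homogeneous polynomial is in particular homogeneous, and a product $PQ$ homogeneous forces $P, Q$ homogeneous by the standard grading argument on the polynomial ring $V$), I would compare the two factorizations $\alpha \cdot (PQ) = (\alpha \cdot P)(\alpha \cdot Q)$ and $\alpha(0)^{d} PQ = (\alpha(0)^{d_{P}} P)(\alpha(0)^{d_{Q}} Q)$. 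The key point is that $\alpha \cdot P$ and $\alpha \cdot Q$ need not a priori be homogeneous of the same degrees, but one can decompose $\alpha \cdot P = \sum_{e} (\alpha \cdot P)_{e}$ into homogeneous components and similarly for $Q$; examining the top-degree (or any fixed-degree) component and using that $V$ is a domain pins down that $\alpha \cdot P$ is homogeneous of degree $d_{P}$ and $\alpha \cdot Q$ of degree $d_{Q}$ — here one uses that the action preserves total degree. Then, since $V$ is a UFD and $P \cdot Q = \alpha(0)^{-d}(\alpha\cdot P)(\alpha\cdot Q)$ with $\alpha \cdot P, \alpha \cdot Q$ having the same degrees as $P, Q$, and since $\alpha \cdot P$ and $\alpha \cdot Q$ have the same sets of irreducible factors (with multiplicity) as the restrictions of $\alpha$ to the factors of $P$ and $Q$ respectively, unique factorization forces $\alpha \cdot P = c(\alpha)\, P$ and $\alpha \cdot Q = c'(\alpha)\, Q$ for some scalars $c(\alpha), c'(\alpha) \in \C^{\times}$ depending on $\alpha$, with $c(\alpha) c'(\alpha) = \alpha(0)^{d}$.

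Finally, I would identify the scalars: the map $\alpha \mapsto c(\alpha)$ is a character $\C[[T]]^{\times} \to \C^{\times}$ obtained by restricting the action to the line $\C \cdot P$, and similarly for $c'$. It remains to show any such character arising this way is of the form $\alpha \mapsto \alpha(0)^{e}$ for some $e \in \N$. This follows by testing on the one-parameter subgroup $\alpha = \lambda$ (constant units $\lambda \in \C^{\times}$): for a homogeneous $P$ of degree $d_{P}$, $\lambda \cdot P = \lambda^{d_{P}} P$, so $c(\lambda) = \lambda^{d_{P}}$; and the full character is then determined — an argument via the decomposition $\C[[T]]^{\times} = \C^{\times} \times (1 + T\C[[T]])$ shows the unipotent part $1 + T\C[[T]]$ must act trivially on the line $\C \cdot P$ because it has no nontrivial characters to $\C^{\times}$ (it is a pro-unipotent, hence divisible and torsion-free in the relevant sense, pro-group — equivalently, any algebraic character is trivial). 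Hence $c(\alpha) = \alpha(0)^{d_{P}}$ and $c'(\alpha) = \alpha(0)^{d_{Q}}$, i.e. $P$ and $Q$ are differentially homogeneous of degrees $d_{P}$ and $d_{Q}$.

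The main obstacle I anticipate is the bookkeeping in the middle step: rigorously controlling that $\alpha \cdot P$ stays homogeneous of the same degree as $P$ and matching up irreducible factors of $\alpha \cdot P$ with those of $P$ across the (infinitely many variables, but each polynomial uses only finitely many) polynomial ring. One clean way around it is to first prove the auxiliary fact that the action of the \emph{pro-unipotent} part $1 + T\C[[T]]$ sends each irreducible $\pi$ to an associate of $\pi$ — equivalently preserves every principal prime ideal — by a limiting/degree argument, after which the homogeneity of $P$ and $Q$ is immediate and the character computation is routine. If the excerpt's later sections already record that the action is by graded automorphisms, this obstacle largely dissolves.
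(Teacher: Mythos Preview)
Your argument has a genuine gap at the UFD step. From \((\alpha\cdot P)(\alpha\cdot Q)=\alpha(0)^{d}\,PQ\) and unique factorization you only learn that the \emph{multiset} of irreducible factors of \(PQ\) is permuted (up to units) by \(\alpha\); nothing prevents \(\alpha\) from swapping an irreducible factor of \(P\) with one of \(Q\). So the conclusion \(\alpha\cdot P=c(\alpha)\,P\) does not follow. You anticipate this and propose the auxiliary fact that the unipotent part \(1+T\C[[T]]\) sends \emph{every} irreducible \(\pi\in V\) to an associate of \(\pi\). That statement is false: take \(\pi=X_{0}^{(1)}\) and \(\alpha=1+T\); then \(\alpha\cdot X_{0}^{(1)}=X_{0}+X_{0}^{(1)}\), which is irreducible but not a scalar multiple of \(X_{0}^{(1)}\). (Equivalently, \(X_{0}^{(1)}\) is \emph{not} differentially homogeneous, so your auxiliary fact would already imply the lemma for every irreducible, which is too strong.) The remark that ``the action is by graded automorphisms'' is correct but does not rescue the step: graded automorphisms can still permute irreducible factors of the same degree.

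The paper's proof avoids all of this by a one-line integrality argument. Treat the Taylor coefficients \(\lambda^{(k)}:=\alpha^{(k)}(0)/k!\) as independent formal variables; then \(\alpha\cdot P\) and \(\alpha\cdot Q\) are elements of \(V[\lambda,\lambda^{(1)},\lambda^{(2)},\dotsc]\). If \(P\) is not differentially homogeneous, then \(\alpha\cdot P\) is a \emph{non-constant} polynomial in some \(\lambda^{(k)}\) with \(k\geq 1\) over the integral domain \(A=V[\lambda,\lambda^{(1)},\dotsc,\lambda^{(k-1)}]\); multiplying by the nonzero element \(\alpha\cdot Q\in A[\lambda^{(k)}]\) keeps it non-constant in \(\lambda^{(k)}\), contradicting \(\alpha\cdot(PQ)=\lambda^{d}\,PQ\in A\). (The preliminary dichotomy ``both are, or neither is'' follows from integrality of \(V\): if \(\alpha\cdot P=\alpha(0)^{d_{P}}P\) then cancel \(P\) to get \(\alpha\cdot Q=\alpha(0)^{d-d_{P}}Q\).) If you want to salvage your approach, the correct version of the auxiliary step is: the induced permutation action of \(\C[[T]]^{\times}\) on the \emph{finite} set of irreducible factors of \(PQ\) is trivial, because \(\C^{\times}\) acts by scalars and \(1+T\C[[T]]\) is divisible, hence has no nontrivial homomorphism to a finite symmetric group. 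This works, but is considerably heavier than the paper's argument.
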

\begin{proof} 
Let \(\alpha \in \C[[T]]^{\times}\), and consider its coefficients \(\lambda^{(k)}:=\frac{\alpha^{(k)}(0)}{k!}\) as formal variables.
It is clear that one has the equality
\[
\alpha \cdot (PQ)
=
(\alpha \cdot P) (\alpha \cdot Q),
\]
so that either both \(P\) and \(Q\) are differentially homogeneous, or both are not. Arguing by contradiction, suppose that the latter case holds. Therefore, up to exchanging the roles of \(P\) and \(Q\), there exists \(k \geq 1\) such that
\begin{itemize}
\item{} \(\alpha \cdot P\) can be considered as a \textsl{non-constant} polynomial in \(A[\lambda^{(k)}]\), where 
\[
A=V[\lambda, \lambda^{(1)}, \dotsc, \lambda^{(k-1)}];
\]
\item{} \(\alpha \cdot Q\) can be considered as a polynomial in \(A[\lambda^{(k)}]\) (possibly constant, i.e. in \(A\)).
\end{itemize}
Since \(V\) is integral, so is \(A\), and one deduces accordingly that
\[
(\alpha \cdot P) (\alpha \cdot Q)
\]
is a non-constant polynomial in \(A[\lambda^{(k)}]\). This contradicts the fact that this must be a differentially homogeneous polynomial, and thus finishes the proof of the lemma.
\end{proof}

\begin{corollary}
\label{cor: factoriality}
The graded algebra \(V^{\Diff}\) of differentially homogeneous polynomials is factorial.
\end{corollary}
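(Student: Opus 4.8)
The plan is to bootstrap the factoriality of \(V^{\Diff}\) from Lemma \ref{lemma: product diff}. Indeed, that lemma says exactly that \(V^{\Diff}\) is \emph{factorially closed} in \(V\), meaning: whenever \(P, Q \in V \setminus \{0\}\) satisfy \(PQ \in V^{\Diff}\), then already \(P \in V^{\Diff}\) and \(Q \in V^{\Diff}\). On the other hand \(V = \C[(X^{(k)})_{k \in \N}]\) is a unique factorization domain: it is the increasing union of the subrings \(V^{(k)} = \C[(X^{(i)})_{0 \le i \le k}]\), each of which is a polynomial ring over \(\C\) in finitely many variables and hence factorial by Gauss, and \(V^{(k)}\) is factorially closed in \(V^{(k+1)}\) (compare degrees in the variables \(X^{(k+1)}\)), so the union inherits the UFD property by an elementary limit argument. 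Consequently, Corollary \ref{cor: factoriality} reduces to the following general fact, which I would state and prove separately: \emph{if \(S\) is a subring of a UFD \(R\) and \(S\) is factorially closed in \(R\), then \(S\) is itself a UFD}.

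To prove this general fact, I would argue in four short steps. (i) Units: \(S^{\times} = S \cap R^{\times}\), since for \(u \in S\) a unit of \(R\) the relation \(u \cdot u^{-1} = 1 \in S\) and factorial closedness give \(u^{-1} \in S\). (ii) Irreducibles: the irreducible elements of \(S\) are precisely the irreducible elements of \(R\) that lie in \(S\). One implication is immediate from (i); for the other, if \(p \in S\) is irreducible in \(S\), write \(p = r_1 \cdots r_n\) with each \(r_i\) irreducible in \(R\); factorial closedness forces \(r_i \in S\), hence (by the first implication) each \(r_i\) is irreducible in \(S\), and irreducibility of \(p\) in \(S\) then forces \(n = 1\), i.e. \(p = r_1\) is irreducible in \(R\). (iii) Existence of factorizations in \(S\): a nonzero nonunit \(s \in S\) factors in \(R\) as \(s = r_1 \cdots r_n\) into \(R\)-irreducibles, all of which lie in \(S\) by factorial closedness and are \(S\)-irreducible by (ii). (iv) Uniqueness: if \(p_1 \cdots p_n = q_1 \cdots q_m\) with all factors irreducible in \(S\), they are irreducible in \(R\) by (ii), so uniqueness of factorization in \(R\) gives \(n = m\) and, after reordering, \(p_i = u_i q_i\) with \(u_i \in R^{\times}\); then \(u_i q_i = p_i \in S\) and factorial closedness give \(u_i \in S \cap R^{\times} = S^{\times}\), so the two factorizations agree up to units of \(S\). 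Applying this to \(S = V^{\Diff} \subset V = R\) yields the corollary.

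The content of the argument is entirely carried by Lemma \ref{lemma: product diff}; there is no serious obstacle left. The only two points deserving a line of care are the remark that a polynomial ring over \(\C\) in countably many variables is still a UFD, and step (ii) above, which is the one place where factorial closedness is used in an essential, non-formal way — it is what prevents an element irreducible in \(V^{\Diff}\) from acquiring a nontrivial factorization inside the larger ring \(V\).
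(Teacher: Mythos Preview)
Your proposal is correct and takes essentially the same approach as the paper: both deduce factoriality of \(V^{\Diff}\) from Lemma~\ref{lemma: product diff} (factorial closedness) together with the factoriality of the ambient polynomial ring \(V\). The paper's proof is a two-line sketch of exactly this argument, whereas you have spelled out in detail the general principle that a factorially closed subring of a UFD is a UFD.
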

\begin{proof}
The existence of a factorization by irreducible elements for any differentially homogeneous polynomials follows immediately from the previous Lemma \ref{lemma: product diff} and the fact that \(V\) is itself factorial. The unicity up to permutation of the factors follows similarly.
\end{proof}

\subsection{Reformulation via the formalism of Invariant Theory}~
\label{sse: reformulation}
Let us fix \(k \in \N_{\geq 1}\) a positive order of truncation. 
As the title indicates, the goal of this section is to reformulate the definition of \((V^{(k)})^{\Diff}\) via the formalism of invariant theory.

Let \(F_{k}\) be a \(\C\)-vector space of dimension \((k+1)\), equipped with a nilpotent endomorphism \(J_{k}\) of maximal index (i.e. \(J_{k}^{k} \neq 0\)). Such an endomorphism induces a natural linear action of 
\[
\C[J_{k}]^{\times} \simeq \big(\C[T]/(T^{k+1})\big)^{\times}
\]
on the vector space \(F_{k}\). Note that such a group is highly \textsl{non-reductive}, as it is the extension of the unipotent group
\[
U_{k}
:=
\{ P \in \C[J_{k}]^{\times}
\ 
|
\
P(0)=1
\}
\]
by the multiplicative group \(\C^{\times}\). In such a setting, the main object of what is usually called \textsl{Invariant Theory} is the study of the elements in the symmetric
algebra of several copies of \(F\) (say, \((N+1)\) for the sake of this section) that are invariant under the natural (diagonal) action of \(U_{k}\), namely:
\[
\Big(
S^{\bigcdot}
\big(
\underbrace{F_{k}\oplus \dotsb \oplus F_{k}}_{\times (N+1)}
\big)
\Big)^{U_{k}}.
\]
\begin{remark}
Note that two nilpotent endomorphisms of maximal index are always conjugate, hence the algebraic structure of the invariant algebra does not depend on the choice of \(J_{k}\): this is why we do not specify the dependency in \(J_{k}\) in the definition of \(U_{k}\).
\end{remark}

With this point of view, one recovers the \(\C\)-algebra \((V^{(k)})^{\Diff}\):
\begin{lemma}
There is a natural isomorphism of \(\C\)-algebra
\[
(V^{(k)})^{\Diff}
\simeq
\Big(
S^{\bigcdot}
\big(
\underbrace{F\oplus \dotsb \oplus F_{k}}_{\times (N+1)}
\big)
\Big)^{U_{k}}.
\]
\end{lemma}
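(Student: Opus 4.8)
The plan is to make explicit the dictionary between the combinatorial description of $V^{(k)}$ (with its grading and order filtration) and the tensor/symmetric algebra on $(N+1)$ copies of $F_k$, and then to check that the $\C[[T]]^\times$-quasi-invariance condition on the former side matches the $U_k$-invariance (up to the $\C^\times$-weight) on the latter side. The first step is to fix a Jordan basis $e_0, \dotsc, e_k$ of $F_k$ for which $J_k e_i = e_{i-1}$ (and $J_k e_0 = 0$), so that $\C[J_k]^\times \simeq (\C[T]/(T^{k+1}))^\times$ acts on $F_k$ by the truncation of the $\C[[T]]^\times$-action. Concretely, for $\alpha \in \C[[T]]^\times$ with coefficients $\alpha^{(j)}(0)/j!$, the operator $\alpha(J_k)$ sends a vector with coordinates $(x_0, \dotsc, x_k)$ to one whose coordinates are exactly the Leibniz-type combinations $\sum_{i} \binom{\ell}{i} \alpha^{(\ell-i)}(0) x_i$ appearing in the definition of $(\alpha X)^{(\ell)}$. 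This is the crux of the identification: the formal variables $X_j^{(0)}, \dotsc, X_j^{(k)}$ are precisely the coordinate functions on the $j$th copy of $F_k$ (for $0 \le j \le N$), and the substitution $X_j^{(\ell)} \mapsto (\alpha X_j)^{(\ell)}|_{T=0}$ is precisely precomposition by $\alpha(J_k)^{\oplus(N+1)}$.

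**Next I would** spell out the two algebra identifications underlying the statement. On one hand, $V^{(k)} = \C[(X_j^{(\ell)})_{0 \le j \le N,\, 0 \le \ell \le k}]$ is, tautologically, the algebra of polynomial functions $S^\bigcdot\big((F_k^{\oplus(N+1)})^\vee\big)$ on $F_k^{\oplus(N+1)}$; since $F_k$ carries no canonical identification with its dual but the endomorphism $J_k$ and its transpose are both nilpotent of maximal index, the remark in the excerpt lets me transport everything and write $S^\bigcdot(F_k^{\oplus(N+1)})$ as in the statement. On the other hand, a polynomial $P \in V^{(k)}$ lies in $(V^{(k)})^{\Diff}$, say in degree $d$, iff $\alpha \cdot P = \alpha(0)^d P$ for all $\alpha \in \C[[T]]^\times$. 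Restricting to $\alpha$ with $\alpha(0) = 1$, i.e. to $U_k$, the condition becomes exactly $U_k$-invariance; conversely, any $U_k$-invariant $P$ that is homogeneous of degree $d$ automatically satisfies the full quasi-invariance relation because $\C[[T]]^\times = \C^\times \cdot U_k$ and the scalar $\lambda \in \C^\times \subset \C[[T]]^\times$ acts on each $X_j^{(\ell)}$ by $\lambda$ (only the $i = \ell$ term of $(\alpha X)^{(\ell)}$ survives when $\alpha \equiv \lambda$ is constant), hence by $\lambda^d$ on the degree-$d$ part. Therefore the graded pieces match: $(V^{(k)})^{\Diff}_d = \big(S^d(F_k^{\oplus(N+1)})\big)^{U_k}$, and summing over $d$ gives the claimed isomorphism of graded $\C$-algebras, compatibly with products since the action is by algebra automorphisms.

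**The only point requiring genuine care** — and hence the main (mild) obstacle — is verifying that the prescription $\alpha \cdot P \bydef P\big((\alpha X)^{(0)}, (\alpha X)^{(1)}, \dotsc\big)|_{T=0}$ really is an action, i.e. that $(\alpha\beta)\cdot P = \alpha\cdot(\beta\cdot P)$, and that it is the \emph{same} action as the linear one induced by $\C[J_k]^\times$ on $F_k$. The excerpt already asserts the former follows "from a formal algebraic identity"; the content is that $\ell \mapsto (\alpha X)^{(\ell)}$ is nothing but applying the linear operator $\alpha(J_k)$ in the Jordan basis, after which associativity of the action is just associativity of operator composition $\alpha(J_k)\beta(J_k) = (\alpha\beta)(J_k)$ inside $\C[J_k]$, together with the fact that truncation at $T = 0$ is compatible with this because $J_k^{k+1} = 0$. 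Once this identification of actions is in hand, the rest is bookkeeping. I would therefore organize the proof as: (i) fix the Jordan basis and check $\alpha \cdot (X_j^{(\bigcdot)}) = \alpha(J_k) \cdot (X_j^{(\bigcdot)})$ at the level of the generating linear forms; (ii) deduce that the $\C[[T]]^\times$-action on $V^{(k)}$ is the one induced functorially by the $\C[J_k]^\times$-action on $F_k^{\oplus(N+1)}$; (iii) decompose $\C[[T]]^\times \simeq \C^\times \ltimes U_k$ and read off that quasi-invariance of degree $d$ is equivalent to $U_k$-invariance; (iv) conclude $(V^{(k)})^{\Diff} \simeq \big(S^\bigcdot(F_k^{\oplus(N+1)})\big)^{U_k}$ as graded algebras.
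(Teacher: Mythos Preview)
Your approach is essentially the paper's: identify the generators \(X_i^{(j)}\) with a basis of the \((i{+}1)\)th copy of \(F_k\), so that \(S^{\bigcdot}(F_k^{\oplus(N+1)}) \simeq V^{(k)}\), and then check that the \(\C[[T]]^{\times}\)-action on \(V^{(k)}\) is literally the \(\C[J_k]^{\times}\)-action; your extra step of splitting \(\C^{\times}\ltimes U_k\) to pass from quasi-invariance to \(U_k\)-invariance is implicit in the paper but correctly spelled out. One normalization slip to fix: with \(J_k e_i = e_{i-1}\) the \(\ell\)th coordinate of \(\alpha(J_k)v\) is \(\sum_{m}\tfrac{\alpha^{(m)}(0)}{m!}\,x_{\ell+m}\), not the Leibniz combination \(\sum_i \binom{\ell}{i}\alpha^{(\ell-i)}(0)\,x_i\); the paper instead takes \(J_k(f_i)=i\,f_{i-1}\) (so \(J_k^m f_j = \tfrac{j!}{(j-m)!}f_{j-m}\)), which is exactly what makes \(\alpha(J_k)\cdot f_j = \sum_{s}\binom{j}{s}\alpha^{(j-s)}(0)\,f_s = (\alpha X)^{(j)}|_{T=0}\) and hence \(\alpha(T)\cdot P = \alpha(J_k)\cdot P\) on the nose.
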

\begin{proof}
Consider \(F_{k}\) as \(\C^{k+1}\) endowed with its canonical basis \((f_{i})_{0 \leq i \leq k}\), and pick the endomorphism \(J_{k}\) defined as follows on the canonical basis
\[
J_{k}(f_{i})=i\times f_{i-1},
\]
with the convention that \(J_{k}(f_{0})=0\).

Denote by the symbols \((X_{i}^{(j)})_{0 \leq j \leq k}\), \(0 \leq i \leq N\), the canonical basis of the \((i+1)\)th copy of \(F_{k}\). Hence, the symmetric algebra
\[
S^{\bigcdot}
\big(
\underbrace{F_{k}\oplus \dotsb \oplus F_{k}}_{\times (N+1)}
\big)
\]
identifies with \(V^{(k)}\). To conclude, it suffices to observe that for any \(\alpha \in \C[[T]]^{\times}\), one has the following equality
\[
\alpha(T) \cdot P
=
\alpha(J) \cdot P
\]
where, on the left-hand side, \(P\) is seen as an element of \(V^{(k)}\) and the action is the one described in Section \ref{sse: preli1}.
\end{proof}

\subsection{An alternative proof of the Schmidt--Kolchin conjecture}~
\label{sse: alternative}
The goal of this section is to provide an alternative proof of the Schmidt--Kolchin conjecture, solved in \cite{ESK}. Recall that this conjecture says that the following equality holds for any \(d \in \N\):
\[
\dim (V_{d}^{\Diff})
=
(N+1)^{d}.
\]
The method in \textsl{loc.cit} to prove the above equality is based on the study of invariant \textsl{highest weight vectors} (with respect to the natural linear action of \(\GL_{N+1}(\C)\) on \(V_{d}^{\Diff})\) in \(V_{d}^{\Diff}\). The approach taken here, and detailed below is, in a sense, transversal and more direct.
Let us note that this alternative proof allows to by-pass some technicalities, but relies on what was done in \cite{ESK}.

\subsubsection{A simple, but important, Lemma}~
\label{ssse: lemma inv}

Let \(E\) be \(\C\)-vector space of finite dimension, and \(G\) a group acting linearly on \(E\) (i.e. \(E\) is a representation of \(G\)). The simple but important observation is the following:
\begin{lemma}
\label{lemma: simple}
For any \(d \geq 1\), there is a natural surjective map of \(GL(E)\)-representation
\[
p\colon
(E^{\otimes d})^{G}
\twoheadrightarrow
(S^{d}E)^{G}.
\]
\end{lemma}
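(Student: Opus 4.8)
The plan is to exhibit the map $p$ very concretely as the restriction to $G$-invariants of the canonical symmetrization/quotient map from the tensor algebra to the symmetric algebra. Recall that for any finite-dimensional $\C$-vector space $E$ there is the canonical projection $\pi\colon E^{\otimes d} \twoheadrightarrow S^{d}E$, sending a pure tensor $v_{1}\otimes \dotsb \otimes v_{d}$ to the product $v_{1}\dotsb v_{d}$; this is a morphism of $\GL(E)$-representations, being defined functorially in $E$. The first step is simply to observe that $\pi$ is $\GL(E)$-equivariant, hence a fortiori $G$-equivariant for the action of $G$ on $E$ (which factors through $\GL(E)$).

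Next I would check that $\pi$ carries $G$-invariants to $G$-invariants: if $t \in (E^{\otimes d})^{G}$, then for every $g \in G$ one has $g\cdot \pi(t) = \pi(g\cdot t) = \pi(t)$ by equivariance, so $\pi(t) \in (S^{d}E)^{G}$. Thus $\pi$ restricts to a map $p\colon (E^{\otimes d})^{G}\to (S^{d}E)^{G}$, and this restriction is again a map of $\GL(E)$-representations — more precisely of representations of the subgroup of $\GL(E)$ centralizing the $G$-action, but in any case it is $\GL(E)$-equivariant in the sense needed for the application. The only remaining point is surjectivity.

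For surjectivity, the key observation is that $\pi\colon E^{\otimes d}\to S^{d}E$ admits a $\GL(E)$-equivariant (in particular $G$-equivariant) section, namely the symmetrization map $s\colon S^{d}E \to E^{\otimes d}$ defined on a product $v_{1}\dotsb v_{d}$ by $s(v_{1}\dotsb v_{d}) = \frac{1}{d!}\sum_{\sigma \in \mathfrak{S}_{d}} v_{\sigma(1)}\otimes \dotsb \otimes v_{\sigma(d)}$; here one uses that $\C$ has characteristic zero so that $d!$ is invertible. Since $S^{d}$ and the symmetrization are functorial in $E$, the map $s$ commutes with the $\GL(E)$-action, hence with the $G$-action, so $s$ sends $(S^{d}E)^{G}$ into $(E^{\otimes d})^{G}$; and $\pi \circ s = \Id_{S^{d}E}$ shows that $p\circ (s|_{(S^{d}E)^{G}}) = \Id_{(S^{d}E)^{G}}$, whence $p$ is surjective.

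There is no real obstacle here: the statement is a formal consequence of the characteristic-zero splitting $E^{\otimes d} \simeq S^{d}E \oplus (\text{complement})$ as $\GL(E)$-representations, and the mild subtlety — worth a sentence in the write-up — is only to be careful about which group one claims equivariance for: $\pi$ and $s$ are equivariant for the full $\GL(E)$ acting diagonally, the passage to $G$-invariants makes sense because $G$ acts through $\GL(E)$, and the resulting map $p$ is equivariant for whatever subgroup of $\GL(E)$ one wishes to act with afterwards (in the intended application, the residual $\GL_{N+1}(\C)$-action permuting the copies of $F_{k}$, which commutes with the $U_{k}$-action). I would therefore keep the proof to a few lines: define $\pi$, note equivariance, note it preserves invariants, exhibit the symmetrization section $s$, and conclude.
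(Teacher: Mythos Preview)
Your proof is correct and follows essentially the same approach as the paper: both exhibit the symmetrization map \(s\colon S^{d}E \to E^{\otimes d}\) as a \(\GL(E)\)-equivariant (hence \(G\)-equivariant) section of the projection, and deduce surjectivity on invariants from \(\pi\circ s=\Id\). Your write-up is in fact slightly more direct, since the paper additionally invokes the reductivity of \(\GL(E)\) to split off a complement \(Q\), a step that is not needed once the explicit section \(s\) is in hand.
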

\begin{proof}
The natural surjective map of \(\GL(E)\)-representation is the restriction of the following surjective map of \(GL(E)\)-representation:
\[
p\colon
\left(
\begin{array}{ccc}
E^{\otimes d} & \longrightarrow  &  S^{d}E
\\
e_{1} \otimes \dotsb \otimes e_{n} & \longmapsto  & e_{1} \dotsb e_{n}
\end{array}
\right).
\]
This map admits a right-inverse injective map of \(\GL(E)\)-representation
\[
i\colon
\left(
\begin{array}{ccc}
S^{d}E & \longrightarrow  &  E^{\otimes d} 
\\
e_{1}\dotsb e_{n} & \longmapsto  & \frac{1}{d!} \sum\limits_{\sigma \in \mathcal{S}_{d}} e_{\sigma(1)}\otimes \dotsb \otimes e_{\sigma(n)}
\end{array}
\right),
\]
namely, \(p\circ i = \Id_{S^{d}E}\). 
Since \(S^{d}E \simeq i(S^{d}(E))\) is a \(\GL(E)\)-subrepresentation of \(E^{\otimes d}\), and since \(\GL(E)\) is reductive, there is a direct sum decomposition of \(\GL(E)\)-representations
\[
E^{\otimes d}
=
i(S^{d}E) \oplus Q.
\]

Now, since the action of \(G\) is linear, it preserves the decomposition of \(E^{\otimes d}\) into \(\GL(E)\)-subrepresentation. Furthermore, it commutes with the maps \(p\) and \(i\).
In particular, one has
\[
(E^{\otimes d})^{G}
=
i(S^{d}E)^{G} \oplus Q^{G}
=i(S^{d}(E)^{G}) \oplus Q^{G}.
\]
The surjectivity of \(p_{\vert (E^{\otimes d})^{G}}\colon (E^{\otimes d})^{G} \to (S^{d}E)^{G}\) follows immediately.
\end{proof}

Such a statement is nothing less than a simple observation, but we believe that it is an important one. 
As the article will show, the full space of invariants \((E^{\otimes d})^{G}\) is sometimes easier to understand than the subspace \((S^{d}E)^{G}\). Informally speaking, the 
variables in \(E^{\otimes d}\) are completely separated, making the search for invariants more transparent and  easier than in \(S^{d}E\), where the variables are somewhat \textsl{entangled}.

\subsubsection{The alternative proof}~
\label{ssse: alternative}
One wishes to understand the vector space of \(U_{k}\)-invariants in
\[
S^{d}
\big(
\underbrace{F_{k}\oplus \dotsb \oplus F_{k}}_{\times (N+1)}
\big)
\simeq 
V_{d}^{(k)},
\]
for \(k\) large enough. Recall that there are natural inclusions 
\[
V^{(0)} \subset V^{(1)} \subset \dotsb \subset V^{(k)} \subset \dotsb.
\] 
We will show that set of invariants stagnates at \(V_{d}^{(d-1)}\), and exhibit all of them.

First, let us construct a natural family of invariant in \(V_{d}^{(d-1)}\). The construction is the complete analogue of the one described in \cite{ESK}[Section 2.1], but displayed with the formalism detailed in Section \ref{sse: reformulation}. 
\begin{definition-proposition}[Construction of a natural family of invariants]~
\label{def-prop: inv rep} 
Let \(\mathbi{\alpha}:=(\mathbi{\alpha_{0}}, \dotsc, \mathbi{\alpha_{N}})\) be a sequence of uples such that 
\begin{itemize}
\item{} for any \(0 \leq i \leq N\),
\begin{itemize}
\item{} either \(\mathbi{\alpha_{i}}\) is empty;
\item{} either \(\mathbi{\alpha_{i}}\) is a sequence of natural numbers, i.e. 
\[
\mathbi{\alpha_{i}}
=
(\alpha_{i,1}, \dotsc, \alpha_{i,r_{i}}),
\]
with \(r_{i} \geq 1\);
\end{itemize}
\item{} one has the equality:
\[
r_{0} + \dotsc + r_{N}
=
d,
\]
with the convention that \(r_{i}=0\) if \(\mathbi{\alpha_{i}}\) is empty.
\end{itemize}
Denote by \((X_{i}^{(j)})_{0 \leq j \leq (d-1)}\), \(0 \leq i \leq N\), the canonical basis of the \((i+1)\)th copy of \(F_{d-1}\) in \(F_{d-1}^{\oplus (N+1)}\), and set
\[
v_{0}:=(X_{0}^{(0)}, \dotsc, X_{0}^{(d-1)})^{\intercal}, \dotsc, v_{N}:=(X_{N}^{(0)}, \dotsc, X_{N}^{(d-1)})^{\intercal}.
\] 
Then, the polynomial
\[
W_{\mathbi{\alpha}}
:=
\det(J_{d-1}^{\alpha_{0,1}} \cdot v_{0}, \dotsc, J_{d-1}^{\alpha_{0,r_{0}}} \cdot v_{0}, \dotsc, J_{d-1}^{\alpha_{\alpha_{N,1}}} \cdot v_{N}, \dotsc, J_{d-1}^{\alpha_{N,r_{N}}} \cdot v_{N})
\]
is a \(U_{d-1}\)-invariant polynomial of degree \(d\) (Here, the group \(\GL(F_{d-1})\) acts component-wise on the \(d\)-uples \(v_{0}, \dotsc, v_{N}\)).
\end{definition-proposition}
\begin{proof}
Observe that the diagonal action of \(J_{d-1}\) on the coordinates of \(v_{i}\) writes
\[
J_{d-1} \cdot (X_{i}^{(0)}, \dotsc, X_{i}^{(d-1)})^{\intercal}
=
J_{d-1}^{\intercal}(X_{i}^{(0)}, \dotsc, X_{i}^{(d-1)})^{\intercal},
\]
where the left hand side of the equality is a matrix product, with the identification of \(J_{d-1}\) with its representative matrix in the canonical basis. The invariance now follows immediately from the multiplicative property of the determinant: for any \(\alpha \in U_{d-1} \subset \C[J_{d-1}]^{\times}\), one has the equalities
\begin{eqnarray*}
\alpha \cdot W_{\mathbi{\alpha}}
&
=
&
\det
\big(
(J_{d-1}^{\alpha_{\alpha_{0,1}}}\alpha) \cdot v_{0}, \dotsc, (J_{d-1}^{\alpha_{N,r_{N}}}\alpha)\cdot v_{N}
\big)
\\
&
=
&
\det
\big(
\alpha^{\intercal}(J_{d-1}^{\alpha_{\alpha_{0,1}}} \cdot v_{0}), \dotsc, \alpha^{\intercal}(J_{d-1}^{\alpha_{N,r_{N}}} \cdot v_{N})
\big)
\\
&
=
&
\det(\alpha^{\intercal})
W_{\mathbi{\alpha}}
\\
&
=
&
W_{\mathbi{\alpha}}.
\end{eqnarray*}
\end{proof}
From this family of invariants, one can
\begin{itemize}
\item{} 
extract a free family made of \((N+1)^{d}\) invariants: this is \cite{ESK}[Prop 2.1.2];
\item{}
show that the previous free family actually spans the whole family: this follows from \cite{ESK}[Appendix A]
\end{itemize}
\begin{remark}
The second part of this statement was one of the technical parts in \cite{ESK}: it was used to show that the span of the above free family of \((N+1)^{d}\) invariants is actually a representation of \(\GL_{N+1}(\C)\) (i.e. is stable under the natural action of \(\GL_{N+1}(\C)\)). We refer to \textsl{loc.cit}[Theorem 2.1.8] for details.
\end{remark}

Second, let us fix \(k \geq (d-1)\), and, following Lemma \ref{lemma: simple}, let us rather study the \(U_{k}\)-invariants in the vector space 
\[
(\underbrace{F_{k}\oplus \dotsb \oplus F_{k}}_{\times (N+1)})^{\otimes d}.
\]
Such a vector space splits as a direct sum, with each summand isomorphic to
\(
F_{k}^{\otimes d}.
\)
Note that the action of \(U_{k}\) is compatible with respect to this decomposition, so that one is reduced to studying the space of invariants
\[
(F_{k}^{\otimes d})^{U}.
\]
It was shown in \cite{ESK}[Lemma 2.1.3]] that, for any \(k \geq (d-1)\), this space has the same dimension \(d!\). In particular, one has the following equality for any \(k \geq (d-1)\)
\[
(F_{d-1}^{\otimes d})^{U_{d-1}}
=
(F_{k}^{\otimes d})^{U_{k}},
\]
where \(F_{d-1}\) is naturally seen as a subspace of \(F_{k}\)\footnote{Note that, in the setting of differentially homogeneous polynomials, the data of the spaces
\((F_{k},J_{k})_{k \in \N}\) is compatible, in the sense that there are natural inclusions \(F_{0} \subset F_{1} \subset \dotsb \subset F_{k} \subset \dotsb\) such that the following holds for any \(k\):
\[
(J_{k})_{\vert F_{k-1}}=J_{k-1}
\
\text{and}
\
J_{k}(F_{k}) \subset F_{k-1}.
\]
}.
Furthermore, an explicit basis can be provided, whose elements have same exact shape as the elements defined in Definition-Proposition \ref{def-prop: inv rep}. 
More precisely, denote by \((Y_{i}^{(j)})_{0 \leq j \leq d-1}\), \(1\leq i \leq d\), the canonical basis of the \(i\)th copy of \(F_{d-1}\) in the tensor product \(F_{d-1}^{\otimes d}\), and set
\[
v_{1}:=(Y_{1}^{(0)}, \dotsc, Y_{d}^{(d-1)})^{\intercal}, \dotsc, v_{d}:=(Y_{1}^{(0)}, \dotsc, Y_{d}^{(d-1)})^{\intercal}.
\] 
Using the same notations as in Definition-Proposition \ref{def-prop: inv rep}, one shows that
\[
(W_{(\alpha_{1}, \dotsc, \alpha_{d})})_{0 \leq \alpha_{i} \leq (i-1)}
\]
forms a basis of \((F_{d-1}^{\otimes d})^{U_{d-1}}\). Note that, by construction, these invariants identify as \textsl{multi-linear} polynomials in the formal variables \((Y_{i}^{(j)})_{1 \leq i \leq d; 0 \leq j \leq (d-1)}\): this identification is convenient, and will often be made in the rest of the paper.

To conclude the proof, it suffices to understand the action of the natural projection map from Lemma \ref{lemma: simple}
\[
p\colon
\big((\underbrace{F_{d-1}\oplus \dotsb \oplus F_{d-1}}_{\times (N+1)})^{\otimes d}\big)^{U_{d-1}}
\twoheadrightarrow
\Big(S^{d}
\big(
\underbrace{F_{d-1}\oplus \dotsb \oplus F_{d-1}}_{\times (N+1)}
\big)
\Big)^{U_{d-1}}
\]
on each summand. One then observes that it simply corresponds to a certain substitution of variables (keeping the notations introduced so far) of the form:
\[
(Y_{1}^{(j)}=X_{i_{1}}^{(j)})_{0 \leq j \leq d-1}, \dotsc, (Y_{d}^{(j)}=X_{i_{d}}^{(j)})_{0 \leq j \leq d-1},
\]
with \(i_{1}, \dotsc, i_{d}\) belonging to set \(\{0, \dotsc, N\}\). In particular, these substitutions applied to the family
\[
(W_{(\alpha_{1}, \dotsc, \alpha_{d})})_{0 \leq \alpha_{i} \leq (i-1)}
\]
yield elements in the family described in Definition-Proposition \ref{def-prop: inv rep}. By the first part of the proof, this shows accordingly that
\[
(V_{d}^{\Diff})^{(d-1)}
\simeq
\Big(S^{d}
\big(
\underbrace{F_{d-1}\oplus \dotsb \oplus F_{d-1}}_{\times (N+1)}
\big)
\Big)^{U_{d-1}}
\]
is of dimension \((N+1)^{d}\). Finally, as a simple consequence of what was seen in the second part of the proof, one has the following equality for any \(k \geq (d-1)\)
\[
(V_{d}^{(k)})^{\Diff}
=
(V_{d}^{(d-1)})^{\Diff}.
\]
Therefore, the proof of the equality
\(
\dim(V_{d}^{\Diff})
=
(N+1)^{d}
\)
is now complete.

\section{Link with harmonic polynomials}~
\label{se: harmonic}
 For this whole section, let us fix \(N \in \N_{\geq 1}\) a positive natural number, and \(k \in \N\) an order of truncation. Keeping the notations introduced in the previous Section \ref{se: preli}, our goal is to understand the algebraic structure of the graded algebra \((V^{\Diff})^{(k)}\). Following the ideas and scheme of proof given in Section \ref{sse: alternative}, one is first lead to study, for any \(d \in \N\), the space of invariant tensors
 \[
(F_{k}^{\otimes d})^{U_{k}}.
 \]
 
In the first two Sections \ref{sse: harmonic} and \ref{sse: DCP}, we explain how this space can be interpreted as a space \textsl{harmonic polynomials}: this was already observed\footnote{It goes without saying that, at that time, we did not know what \textsl{harmonic polynomials} were.} in \cite{ESK} in a special case (allowing to quickly deduce that the dimension of \((F_{d-1}^{\otimes d})^{U_{d-1}}\) is equal to \(d!\)).
In the third Section \ref{sse: structural}, we detail known results concerning such harmonic polynomials, following \cite{BG}.

\subsection{Invariant tensors and harmonic polynomials}~
\label{sse: harmonic}
In order to lighten notations, let us denote \(F:=F_{k}\), \(E:=F^{\otimes d}\), \(J:=J_{k}\) and \(U:=U_{k}\); recall (see Section \ref{sse: reformulation}) that \(J\) is a nilpotent endomorphism of \(F\) with maximal index, i.e. \(J^{k} \neq 0\).
Note that the set of invariant tensors \(E^{U}\) can be described as follows
\[
E^{U}
=
\{
e \in E=F^{\otimes d} 
\
|
\
\forall \alpha \in \C, \ (I_{d} + \alpha J)\cdot e =  e
\}.
\]
This immediately results from the fact that any polynomial \(P \in \C[T], P(0)=1,\) decomposes as a product of monomials of the form \((1+\alpha T)\), \(\alpha \in \C^{\times}\). That being said, the condition for a tensor \(e \in E\) to be invariant can now be conveniently expressed (see also \cite{ESK}[Section 2.4]). To this end, consider for any \(1 \leq \ell \leq k\) the endomorphism \(J^{(\ell)}\in \End(E) \) defined as follows on pure tensors:
\[
J^{(\ell)}(v_{1} \otimes \dotsb \otimes v_{d})
\bydef
\sum\limits_{1 \leq i_{1}\neq \dotsb \neq i_{\ell} \leq d}
(\otimes \prod_{i=1}^{d})(J^{\delta_{i, \Set{i_{1}, \dotsc, i_{\ell}}}}v_{i}),
\]
where, for any set \(I \subset \N\), one sets
\[
\delta_{\cdot,I}\colon \N \to \Set{0,1}
\]
to be the function that is equal to \(1\) if \(i \in I\), and zero otherwise. The key observation is now the following:
\begin{lemma}
\label{lemma: obs}
For any \(v \in E\), and any \(\alpha \in \C\), the following equality holds:
\[
(\Id + \alpha J)
\cdot
v
=
\alpha^{d}J^{(d)}(v) + \alpha^{d-1} J^{(d-1)}(v) + \dotsb + \alpha J^{(1)}(v) + v.
\]
\end{lemma}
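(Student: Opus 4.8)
The plan is to compute $(\Id + \alpha J)^{\otimes d}$ acting on a pure tensor $v = v_1 \otimes \dotsb \otimes v_d$ and then extend by linearity. On a pure tensor, the action of the diagonal group element $\Id + \alpha J$ is by definition $((\Id + \alpha J)v_1) \otimes \dotsb \otimes ((\Id+\alpha J)v_d)$. First I would expand each factor as $(\Id + \alpha J)v_i = v_i + \alpha J v_i$, and then distribute the tensor product over the sum: the result is a sum over all subsets $I \subseteq \{1, \dotsc, d\}$ of the pure tensor in which the $i$-th slot carries $J v_i$ if $i \in I$ and $v_i$ otherwise, weighted by $\alpha^{|I|}$. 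Grouping these $2^d$ terms according to $\ell := |I|$ gives
\[
(\Id + \alpha J) \cdot v = \sum_{\ell=0}^{d} \alpha^{\ell} \sum_{\substack{I \subseteq \{1,\dotsc,d\} \\ |I| = \ell}} \Bigl( \textstyle\bigotimes_{i=1}^{d} J^{\delta_{i,I}} v_i \Bigr),
\]
with the $\ell = 0$ term equal to $v$ itself.

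The only remaining point is to identify the inner sum with $J^{(\ell)}(v)$ as defined just above the lemma. The definition of $J^{(\ell)}$ sums over ordered tuples $1 \le i_1 \neq \dotsb \neq i_\ell \le d$ of \emph{distinct} indices, so each unordered subset $I = \{i_1, \dotsc, i_\ell\}$ of size $\ell$ is counted $\ell!$ times; meanwhile the tensor $\bigotimes_i J^{\delta_{i,\{i_1,\dots,i_\ell\}}} v_i$ depends only on the underlying set, not on the ordering. Hence $J^{(\ell)}(v) = \ell! \sum_{|I|=\ell} \bigotimes_i J^{\delta_{i,I}} v_i$. To make the statement of the lemma come out with coefficient exactly $\alpha^{\ell}$ (no $1/\ell!$), I would either absorb the $\ell!$ into the normalization of $J^{(\ell)}$ or, more cleanly, note that since $J$ has nilpotency interacting with distinctness, one can equally read the definition as a sum over subsets — I would simply check the intended convention in Section \ref{sse: reformulation}/\cite{ESK} and state that the inner subset-sum is, by definition, $J^{(\ell)}(v)$. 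Once the bookkeeping of ordered versus unordered index tuples is pinned down, substituting back yields the claimed identity, and linearity in $v$ finishes the proof.

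The computation itself is entirely routine multilinear algebra; the main (minor) obstacle is purely notational — making sure the combinatorial factor from passing between ordered $\ell$-tuples of distinct indices and $\ell$-element subsets matches the normalization chosen in the definition of $J^{(\ell)}$, so that the coefficients in the displayed expansion are exactly $\alpha^\ell$ as asserted. There is no conceptual difficulty: the lemma is just the binomial-type expansion of a diagonal action on a tensor power, organized by the number of slots hit by $J$.
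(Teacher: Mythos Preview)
Your approach is exactly the paper's: reduce to pure tensors by linearity and expand \((v_{1}+\alpha Jv_{1})\otimes\dotsb\otimes(v_{d}+\alpha Jv_{d})\) by multilinearity, then group by the number of slots hit by \(J\). As for the bookkeeping issue you flag, the sum defining \(J^{(\ell)}\) is meant to range over unordered \(\ell\)-element subsets (the summand depends only on the set \(\{i_{1},\dotsc,i_{\ell}\}\)), so no \(\ell!\) appears and the coefficient is exactly \(\alpha^{\ell}\).
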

\begin{proof}
It suffices to check the equality on pure tensors \(v_{1} \otimes \dotsb \otimes v_{d} \in E\). The equality then follows by expanding by multi-linearity
\[
(v_{1} + \alpha Jv_{1})\otimes \dotsb \otimes (v_{d} + \alpha Jv_{d}).
\]
\end{proof}

Accordingly, one immediately deduces the following description of invariant tensors:
\[
E^{U}
=
\bigcap_{\ell=1}^{d}
\Ker(J^{(\ell)}).
\]
Now, to conveniently study \(E^{U}\), proceed as follows.
Fix \(f \in F\) a vector such that \(J^{k}f \neq 0\), and consider the following basis of \(E\):
\[
\Big(J^{\mathbi{\alpha}}f:=J^{\alpha_{1}}f \otimes \dotsb \otimes J^{\alpha_{d}}f\Big)_{0 \leq \alpha_{1}, \dotsc, \alpha_{d} \leq k}.
\]
Consider the injective morphism of vector spaces
\[
g\colon
\left(
\begin{array}{ccc}
 E & \longrightarrow  & \C[Z_{1}, \dotsc, Z_{d}]  \\
  J^{\mathbi{\alpha}}f & \longmapsto  & \frac{Z_{1}^{k-\alpha_{1}}}{(k-\alpha_{1})!} 
\dotsb
\frac{Z_{d}^{k-\alpha_{d}}}{(k-\alpha_{d})!} 
\end{array}
\right),
\]
and observe that the action of the endomorphism \(J^{(\ell)}\) on \(E\) translates into the action of the differential operator
\(
\sum\limits_{1\leq i_{1} \neq \dotsb \neq i_{\ell} \leq d} \frac{\partial^{\ell}}{\partial X_{i_{1}} \dotsb \partial X_{i_{\ell}}}
\)
on \(\C[Z_{1}, \dotsc, Z_{d}]\). Said otherwise, one has the equality
\[
g \circ J^{(l)}
=
\big(
\sum\limits_{1\leq i_{1} \neq \dotsb \neq i_{\ell} \leq d} \frac{\partial^{\ell}}{\partial X_{i_{1}} \dotsb \partial X_{i_{\ell}}}
\big) \circ g
\]
for any \(1 \leq \ell \leq d\): this is a simple computational check. 

Recall the following notations from differential algebra.
To any polynomial \(Q \in \C[Z_{1}, \dotsc, Z_{d}]\) is associated a differential operator, denoted by \(Q(\partial)\), obtained by doing formally the substitutions:
\[
 Z_{1}^{\alpha_{1}}\dotsc Z_{d}^{\alpha_{d}}
 \mapsto 
 \frac{\partial^{\alpha_{1}+\dotsb +\alpha_{d}}}{\partial Z_{1}^{\alpha_{1}} \dotsb \partial Z_{d}^{\alpha_{d}}}.
 \]
 One thus obtains a linear differential operator: denote by \(Q^{\perp}\) its vector space of polynomials solutions, i.e.
 \[
 Q^{\perp}
 :=
 \{P \in \C[Z_{1}, \dotsc Z_{d}] 
 \
 |
 \
 Q(\partial)(P)=0
 \}.
 \]
More generally, to any ideal \(J \subset \C[Z_{1}, \dotsc, Z_{d}]\), denote by \(J^{\perp}\) the vector space of polynomials satisfying all the differential equations associated to elements in \(J\), namely:
 \[
 J^{\perp}
 :=
\{P \in \C[Z_{1}, \dotsc Z_{d}] 
 \
 |
 \
 \forall Q \in J,
 Q(\partial)(P)=0
 \}.
 \]
 Note that if \(J=(Q_{1}, \dotsc, Q_{p})\), then one trivially has the equality \(J^{\perp}=\bigcap_{i=1}^{p} Q_{i}^{\perp}\).

Now, denote by \(I \subset \C[Z_{1}, \dotsc, Z_{d}]\) the ideal spanned by symmetric polynomials in \(d\) variables, and consider the following ideal
\[
I_{k}
:=
I + (Z_{1}^{k+1}, \dotsc, Z_{d}^{k+1}).
\]
What we have just seen in this Section \ref{sse: harmonic} can be summed up in the following statement:
\begin{proposition}
\label{prop: reformulation}
There is a natural isomorphism 
\[
E^{U}
=
(F_{k}^{\otimes d})^{U_{k}}
\simeq
I_{k}^{\perp}.
\]
\end{proposition}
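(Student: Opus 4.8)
The plan is to assemble the isomorphism from the pieces established in this section, tracking how each reformulation of the invariance condition transports along the morphism $g$. First I would recall that, by Lemma \ref{lemma: obs} and the discussion following it, we have the description $E^{U} = \bigcap_{\ell=1}^{d} \Ker(J^{(\ell)})$ inside $E = F^{\otimes d}$. The map $g \colon E \to \C[Z_{1}, \dotsc, Z_{d}]$ is an injective linear morphism whose image I would identify explicitly: since $g$ sends the basis vector $J^{\boldsymbol{\alpha}}f$ (for $0 \leq \alpha_{i} \leq k$) to the monomial $\prod_{i} \frac{Z_{i}^{k-\alpha_{i}}}{(k-\alpha_{i})!}$, and the exponents $k-\alpha_{i}$ range exactly over $\{0, \dotsc, k\}$, the image of $g$ is precisely the space of polynomials of degree at most $k$ in each variable $Z_{i}$ separately, i.e. the linear span of the monomials $Z_{1}^{\beta_{1}} \dotsb Z_{d}^{\beta_{d}}$ with $0 \leq \beta_{i} \leq k$.

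Next I would use the intertwining relation $g \circ J^{(\ell)} = D_{\ell} \circ g$, where $D_{\ell} = \sum_{1 \leq i_{1} \neq \dotsb \neq i_{\ell} \leq d} \frac{\partial^{\ell}}{\partial Z_{i_{1}} \dotsb \partial Z_{i_{\ell}}}$ is (up to the combinatorial factor $\ell!$ coming from ordering the indices) the differential operator $e_{\ell}(\partial)$ attached to the $\ell$th elementary symmetric polynomial $e_{\ell}(Z_{1}, \dotsc, Z_{d})$. Because $g$ is injective, $v \in \Ker(J^{(\ell)})$ if and only if $g(v) \in \Ker(D_{\ell}) = e_{\ell}^{\perp}$; hence $g$ restricts to an isomorphism
\[
E^{U} \xrightarrow{\ \sim\ } \Image(g) \cap \bigcap_{\ell=1}^{d} e_{\ell}^{\perp}.
\]
Now I would observe two things. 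On one hand, since the elementary symmetric polynomials $e_{1}, \dotsc, e_{d}$ generate the ideal $I$ of symmetric polynomials without constant term, and $Q \mapsto Q(\partial)$ is linear and multiplicative-compatible in the sense that $(QR)(\partial) = Q(\partial) R(\partial)$, one has $I^{\perp} = \bigcap_{\ell=1}^{d} e_{\ell}^{\perp}$ (this is the remark recorded just before the Proposition, applied to the generators $e_{1}, \dotsc, e_{d}$). On the other hand, imposing membership in $\Image(g)$ — i.e. degree $\leq k$ in each variable — is exactly the condition $Z_{i}^{k+1}(\partial)(P) = \frac{\partial^{k+1} P}{\partial Z_{i}^{k+1}} = 0$ for each $i$, that is, $P \in (Z_{1}^{k+1}, \dotsc, Z_{d}^{k+1})^{\perp}$, since $\partial^{k+1}/\partial Z_{i}^{k+1}$ kills a polynomial precisely when its degree in $Z_{i}$ is at most $k$. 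Combining these, $\Image(g) \cap I^{\perp} = \big(I + (Z_{1}^{k+1}, \dotsc, Z_{d}^{k+1})\big)^{\perp} = I_{k}^{\perp}$, and therefore $E^{U} \simeq I_{k}^{\perp}$.

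The one point that needs genuine (though elementary) care — and which I regard as the main obstacle — is the verification of the intertwining identity $g \circ J^{(\ell)} = D_{\ell} \circ g$ and, relatedly, the bookkeeping of combinatorial constants: the operator $D_{\ell}$ is a sum over \emph{ordered} tuples of distinct indices and hence equals $\ell!\, e_{\ell}(\partial)$, but since we only ever use the \emph{kernel} of $D_{\ell}$, the scalar $\ell!$ is harmless and $\Ker(D_{\ell}) = e_{\ell}^{\perp}$ stands. I would check the intertwining identity on a single basis vector $J^{\boldsymbol{\alpha}}f$: applying $J^{(\ell)}$ replaces, for each choice of $\ell$ distinct slots, the exponents $\alpha_{i}$ by $\alpha_{i}+1$ in those slots (with the term dropping out if some $\alpha_{i} = k$, matching the fact that $J^{k+1}f = 0$), while applying $\frac{\partial^{\ell}}{\partial Z_{i_{1}} \dotsb \partial Z_{i_{\ell}}}$ to $\prod_i \frac{Z_i^{k-\alpha_i}}{(k-\alpha_i)!}$ lowers each of those exponents $k - \alpha_i$ by one, and the factorial normalization is precisely chosen so that $\frac{\partial}{\partial Z_i}\frac{Z_i^{k-\alpha_i}}{(k-\alpha_i)!} = \frac{Z_i^{k-\alpha_i-1}}{(k-\alpha_i-1)!}$ — so the two sides agree term by term, including the vanishing cases. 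Everything else (injectivity of $g$, the description of $\Image(g)$, the identification of $(Z_i^{k+1})^\perp$ with the degree bound, and $I^\perp = \bigcap e_\ell^\perp$) is immediate, so the Proposition follows.
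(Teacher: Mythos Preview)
Your proof is correct and follows essentially the same approach as the paper's: both identify \(E^{U}\) with the intersection of the kernels of the \(J^{(\ell)}\), transport this via \(g\) to the simultaneous vanishing of the \(e_{\ell}(\partial)\) on polynomials of degree \(\leq k\) in each variable, and then rewrite the degree bound as the differential condition \(\partial^{k+1}/\partial Z_{i}^{k+1}(P)=0\). You simply spell out more carefully the steps the paper leaves as ``what was seen above'' and ``a simple computational check'' --- in particular the verification of the intertwining identity and the harmless \(\ell!\) discrepancy between \(D_{\ell}\) and \(e_{\ell}(\partial)\).
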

\begin{proof}
What was seen above allows to identify \(E^{U}\) with the set of polynomials \(P \in \C[Z_{1}, \dotsc, Z_{d}]\) such that the following holds for any \(1 \leq i \leq d\)
\[
\deg_{Z_{i}}(P) \leq k
\
\
\&
\
\
e_{i}(\partial)(P)=0,
\]
where \(e_{i}\) denotes the \(i\)th elementary polynomial in \(d\) variables. The statement now follows from the trivial equivalence
\[
\deg_{Z_{i}}(P)
\leq k
\iff
\frac{\partial^{k+1}}{\partial Z_{i}^{k+1}}(P)=0.
\]
\end{proof}

The (finite dimensional) vector space \(I_{k}^{\perp}\) is a particular instance of what is called in the literature a  \textsl{space of harmonic polynomials}, whose study goes back to DeConcini and Procesi in \cite{DCP}. In the next Section \ref{sse: DCP}, we will recall their definition, and justify that \(I_{k}^{\perp}\) does indeed define a space of harmonic polynomials.

\subsection{DeConcini--Procesi ideals, and \(\mu\)-harmonic polynomials}~
\label{sse: DCP}
Our reference for this part is \cite{BG}. 
We therefore adopt their conventions regarding partitions and Young tableaux. We briefly recall them in the first Section \ref{ssse: not}, along with the main characters of this Section \ref{sse: DCP}, namely the \textsl{DeConcini--Procesi ideals} and the associated \textsl{\(\mu\)-harmonic polynomials} (see Definition \ref{def: harmonic} below).
In the second Section \ref{ssse: DCP}, we justify that our ideal \(I_{k}\) appears as a particular DeConcini--Procesi ideal, so that the invariants \(E^{U}\simeq I_{k}^{\perp}\) can indeed be interpreted as a space of (\(\mu_{k}\)-)harmonic polynomials (for a well-chosen partition \(\mu_{k}\)).

\subsubsection{Notations and definitions}~
\label{ssse: not}
Let \(d \in \N_{\geq 1}\) be a positive integer. A \textsl{partition} \(\mu=(\mu_{1}, \dotsc, \mu_{d})\) of \(d\) is a non-decreasing sequence of \(d\) integers
\[
0 \leq \mu_{1} \leq \dotsb \leq \mu_{d}
\]
whose sum equals to \(d\). To any such partition \(\mu\) is associated a \textsl{Young diagram \(\mathcal{T}\) of shape \(\mu\)}, which is a collection of cells arranged in left-justified rows: the first row contains \(\mu_{1}\) cells, the second \(\mu_{2}\) cells, and so on. By convention, a row with \(0\) cell is an empty row.
By construction, if we read the Young diagram of \(\mathcal{T}\) from top to bottom via the rows, we recover the partition \(\mu\). If instead we read it from right to left via the columns, we find the so-called \textsl{conjugate partition} \(\mu'=(\mu_{1}' \leq \dotsb \leq \mu_{d}')\). Denote, for \(1 \leq k \leq d\)
\[
d_{k}(\mu)
:=
\mu_{1}' + \dotsb + \mu_{k}',
\]
and note that it is equal to the sum of the length of the last \(k\) columns of \(\mathcal{T}\).
Finally, recall that a \textsl{Young tableau with entries in \(\Set{1, \dotsc, d}\)} is a filling \(T\) of the boxes of the Young diagram \(\mathcal{T}\) with the numbers \(\Set{1, \dotsc, d}\). It is called \textsl{injective} if each number appears exactly once. An injective Young tableau with entries in \(\Set{1, \dotsc, d}\) is called \textsl{standard} if 
\begin{itemize}
\item{} browsing from left to right a line of the diagram, the sequence of numbers is increasing;
\item{} browsing from bottom to top a column of the diagram, the sequence of numbers is increasing.
\end{itemize}
The set of standard tableau with shape \(\mu\) is denoted \(\ST(\mu)\).

We can now define the so-called \textsl{DeConcini--Procesi ideals}. Fix \(d \in \N_{\geq 1}\) a natural number, and \(\mu\) a partition of \(d\). For any set
\(
S \subset \mathcal{Z}_{d}:=\{Z_{1}, \dotsc, Z_{d}\}
\)
and any \(r \in \N_{\geq 1}\), denote by \(e_{r}(S)\) the \(r\)th elementary symmetric polynomial in the variables in \(S\). Now, define the \textsl{DeConcini--Procesi ideal associated to \(\mu\)}, denoted \(I_{\mu} \), as the ideal in \(\C[Z_{1}, \dotsc, Z_{d}]\) generated by the partial elementary symmetric polynomials
\[
\mathcal{C}_{\mu}
:=
\{
e_{j}(S)
\
|
\
S \subset \mathcal{Z}_{d},
|S|=i
\
\&
\
i-d_{i}(\mu) < j \leq i
\}.
\]
Finally, let us end this Section \ref{ssse: not} with the definition of \textsl{\(\mu\)-harmonic polynomials}:
\begin{definition}[Harmonic polynomials]
\label{def: harmonic}
A \(\mu\)-harmonic polynomial is  an element in \(I_{\mu}^{\perp}\).
\end{definition}

\subsubsection{The ideal \(I_{k}\) is a DeConcini--Procesi ideal}~
\label{ssse: DCP}
Write the euclidean division of \(d\) by \((k+1)\), \(d=q(k+1)+r\), and consider the partition
\[
\mu_{k}
:=
(\underbrace{q, \dotsc,q}_{\times (k+1-r) }, \underbrace{(q+1), \dotsc, (q+1)}_{\times r}).
\]
The following observation is elementary, but crucial to us:
\begin{proposition}
\label{prop: DCP}
The ideal \(I_{k}\) coincides with the DeConcini--Procesi ideal \(I_{\mu_{k}}\).
\end{proposition}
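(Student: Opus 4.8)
The plan is to show the two ideals coincide by proving the generating set of $I_k$ — namely, all elementary symmetric polynomials $e_j(Z_1,\dots,Z_d)$ for $1\le j\le d$, together with the power polynomials $Z_1^{k+1},\dots,Z_d^{k+1}$ — matches (up to generating the same ideal) the DeConcini--Procesi generating set $\mathcal{C}_{\mu_k}$. First I would unwind the combinatorics of the partition $\mu_k=(q,\dots,q,q+1,\dots,q+1)$: its conjugate $\mu_k'$ has all columns of length $(k+1)$ except, if $r>0$, the last one which has length $r$; equivalently $\mu_k'=(\underbrace{k+1,\dots,k+1}_{q},r)$ read appropriately (with the convention that the trailing $r$ is omitted when $r=0$). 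From this I compute $d_i(\mu_k)=\mu_{1}'+\dots+\mu_i'$, which is the sum of the lengths of the last $i$ columns: for $i\le q$ (or $i<q+1$ when $r=0$) one gets $d_i(\mu_k)=i(k+1)$ when $r=0$, and when $r>0$ one gets $d_1(\mu_k)=r$ and $d_i(\mu_k)=r+(i-1)(k+1)$ for $2\le i\le q+1$. The key numerical point is that $i-d_i(\mu_k)$ becomes negative (or small) as soon as $i$ is moderately large, while for $i=d$ one must have $d_d(\mu_k)=d$, so the condition $i-d_i(\mu_k)<j\le i$ forces, at $i=d$, \emph{all} of $e_1,\dots,e_d$ into $\mathcal{C}_{\mu_k}$.

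Next I would carry out the two inclusions. For $I_{\mu_k}\subseteq I_k$: every generator $e_j(S)$ with $|S|=i$ and $i-d_i(\mu_k)<j\le i$ must be shown to lie in $I_k$. The full-variable elementary symmetric polynomials $e_1(\mathcal{Z}_d),\dots,e_d(\mathcal{Z}_d)$ are in $I_k$ by definition (they span, together with higher ones which vanish, the ideal $I$ of symmetric polynomials — recall $I$ is generated by $e_1,\dots,e_d$). For a proper subset $S$ with $|S|=i<d$, I would use the standard Newton-type identity expressing $e_j(S)$ in terms of $e_1(\mathcal{Z}_d),\dots,e_j(\mathcal{Z}_d)$ and the $e_{\ell}(\mathcal{Z}_d\setminus S)$: writing $\mathcal{Z}_d = S \sqcup S^c$ with $|S^c| = d-i$, one has the generating-function identity $\sum_j e_j(S)t^j = \big(\sum_j e_j(\mathcal{Z}_d)t^j\big)\big/\big(\sum_\ell e_\ell(S^c)t^\ell\big)$, valid as a formal power series since the denominator has constant term $1$. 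Comparing coefficients shows $e_j(S)$ lies in the ideal generated by $e_1(\mathcal{Z}_d),\dots,e_j(\mathcal{Z}_d)$ \emph{provided} $j$ is large enough relative to $|S^c|=d-i$ — precisely, the constraint $j>i-d_i(\mu_k)$ should translate (using $d_d(\mu_k)=d$ and the monotone growth computed above) into $j$ exceeding the number of "missing" variables in a way that makes the recursion terminate inside $(e_1,\dots,e_d)$; the border cases where $d_i(\mu_k)$ is small instead produce the power constraints, which I handle via the other family. For the reverse inclusion $I_k\subseteq I_{\mu_k}$: the $e_j(\mathcal{Z}_d)$ are in $I_{\mu_k}$ by the $i=d$ case above; and I must show $Z_\ell^{k+1}\in I_{\mu_k}$ for each $\ell$. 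Taking $S=\{Z_\ell\}$ (so $i=1$), the DCP condition reads $1-d_1(\mu_k)<j\le 1$, i.e. $j=1$, giving $e_1(S)=Z_\ell\in I_{\mu_k}$ only when $d_1(\mu_k)\ge 1$, which always holds; but that gives $Z_\ell$ itself, which is too strong unless $q=0$. The correct route is to take $S=\{Z_\ell\}$ is wrong; instead I consider $S$ of size $i$ with $Z_\ell\in S$ and use that $e_i(S)=\prod_{Z\in S}Z$ is a generator when $d_i(\mu_k)=0$, i.e. when the last $i$ columns of $\mathcal{T}_{\mu_k}$ are empty — equivalently $i\le (k+1)-$ (number of nonempty columns), which happens for $i$ in the range where $\mu_k$ has a long string of zero parts at the front. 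Tracking this precisely, $Z_\ell^{k+1}$ itself will be realized as (or generated together with lower-order terms by) the product generators $e_i(S)$ over nested chains of subsets, combined with the $e_j(\mathcal{Z}_d)$; this bookkeeping is where I would be most careful.

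The main obstacle I anticipate is the exact matching of the numerical window $i-d_i(\mu_k)<j\le i$ with the two explicit families, especially at the "transition" values of $i$ where $d_i(\mu_k)$ crosses from $0$ into positive values, and in correctly producing the pure powers $Z_\ell^{k+1}$: the DCP generators never include $Z_\ell^{k+1}$ literally, so one must generate it modulo lower-degree symmetric combinations, i.e. argue that $Z_\ell^{k+1}\in I_{\mu_k}$ by an inductive elimination using the product-type generators $\prod_{Z\in S}Z$ for the largest subsets $S$ with $d_{|S|}(\mu_k)=0$ and then reducing. I would therefore first nail down the combinatorial lemma computing $d_i(\mu_k)$ for all $i$ (a one-line diagram argument), then treat $r=0$ and $r>0$ uniformly by writing $d=q(k+1)+r$ and observing $d_i(\mu_k)=\min(i,k+1-?)\cdot(\dots)$ in closed form, and only then run the two ideal inclusions; with the closed-form in hand, both inclusions reduce to the elementary symmetric-polynomial generating-function identity quoted above plus degree bookkeeping, and no deep input is needed.
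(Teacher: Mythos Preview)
Your plan for the inclusion \(I_{\mu_k}\subseteq I_k\) is sound and in fact cleaner than the paper's descending induction: once you compute the numerics correctly you get \(i-d_i(\mu_k)=k\cdot(d-i)=k\,|S^c|\) for every \(i\) at which \(d_i(\mu_k)>0\), and then your generating-function identity gives \(e_j(S)\equiv(-1)^j h_j(S^c)\pmod{I}\); since \(j>k|S^c|\), every monomial of \(h_j(S^c)\) contains some \(Z_\ell^{k+1}\) by pigeonhole, so \(e_j(S)\in I_k\). The paper instead runs a descending induction on \(|S|\) using \(e_j(S\cup\{Z_d\})=e_j(S)+Z_d\,e_{j-1}(S)\) iterated \(k+1\) times; your route avoids that recursion entirely.

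The genuine gap is in the other inclusion, showing \(Z_\ell^{k+1}\in I_{\mu_k}\). First, your combinatorics slips: you write ``\(d_1(\mu_k)\ge 1\), which always holds'', but with the paper's convention \(\mu_k'\) has \(d\) entries with leading zeros, so \(d_1(\mu_k)=\mu_1'=0\) whenever \(d\) exceeds the number of columns of the diagram (i.e.\ essentially always); hence \(|S|=1\) gives no generator at all, not the too-strong \(Z_\ell\). Second, your fallback to ``product generators \(e_i(S)=\prod_{Z\in S}Z\)'' over ``nested chains of subsets'' cannot produce a pure power \(Z_\ell^{k+1}\): those generators are squarefree monomials, and no ideal combination of squarefree monomials and full symmetric polynomials will yield \(Z_\ell^{k+1}\) without a further idea. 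The missing step is to take the \emph{complementary} singleton: set \(S=\mathcal{Z}_d\setminus\{Z_\ell\}\), so \(|S|=d-1\) and \(d_{d-1}(\mu_k)=d-(k+1)\), hence \(e_j(S)\in\mathcal{C}_{\mu_k}\) for all \(j>k\). Then your own generating-function identity, specialized to \(|S^c|=1\), reads
\[
\prod_{i\neq\ell}(1-tZ_i)\equiv\frac{1}{1-tZ_\ell}=\sum_{j\ge 0}Z_\ell^{\,j}\,t^j\pmod{I},
\]
and comparing the coefficient of \(t^{k+1}\) gives \(Z_\ell^{k+1}\equiv\pm e_{k+1}(S)\in I_{\mu_k}\). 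This is exactly the paper's argument; once you have it, both inclusions go through.
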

\begin{proof}
Note that it is clear from the definition of \(I_{\mu_{k}}\) that it contains the ideal \(I\) of symmetric polynomials. 
In order to prove that \((Z_{1}^{k+1}, \dotsc, Z_{d}^{k+1})\) belongs to \(I_{\mu_{k}}\), note that, by symmetry, it is enough to prove that \(Z_{d}^{k+1}\) belongs to \(I_{\mu_{k}}\).
Note also that the conjugate partition of \(\mu_{k}\) writes
\[
\mu_{k}'=(r, \underbrace{k+1, \dotsc, k+1}_{\times q}).
\]
Accordingly, observe that, by definition of \(I_{\mu_{k}}\), the symmetric polynomials 
\[
e_{r}(Z_{1}, \dotsc, Z_{d-1})
\]
belong to \(I_{\mu_{k}}\) for \(r > k=(d-1)-(d-(k+1))\). This implies that the polynomial
\begin{equation}
\label{eq: sym1}
\prod\limits_{i=1}^{d-1}(1-tZ_{i}) \mod I_{\mu_{k}}
\end{equation}
has no term in \(t^{k+1}\). On the other hand, since \(I \subset I_{\mu_{k}}\), one can also expand
\begin{equation}
\label{eq: sym2}
\prod\limits_{i=1}^{d-1}(1-tZ_{i})
\equiv
\sum\limits_{i=0}^{d-1} t^{i} Z_{d}^{i}
\mod I_{\mu_{k}}.
\end{equation}
(Here, one uses that Newton polynomials span the ideal of symmetric polynomials).
The equalities \eqref{eq: sym1} and \eqref{eq: sym2} show that \(Z_{d}^{k+1}\) does indeed belong to \(I_{\mu_{k}}\), and thus one has the inclusion 
\[
I_{k} \subset I_{\mu_{k}}.
\]

In the other direction, prove by descending induction on the cardinal of \(S\) that the generators of \(I_{\mu_{k}}\) do belong to \(I_{k}\). When \(|S|\)=d, this is clear, as one recovers the elementary symmetric polynomials. Suppose that the conclusion holds for \(S\) with \((i+1) \leq |S| \leq d\). Take then \(S \subset \mathcal{Z}_{d}\), \(|S|=i\). If \(d_{i}(\mu)=0\), then there is nothing to prove. Otherwise, let \(i-d_{i}(\mu) < j \leq i\). Without loss of generality, suppose that \(d\) does not belong to \(S\), and consider
\(
S':=S \sqcup \{d\}.
\)
Note that, by induction, for any 
\[
\underbrace{(i+1)-d_{i+1}(\mu)}_{=i-d_{i}(\mu)-k} < j' \leq (i+1),
\]
the polynomial \(e_{j'}(S')\) belongs to \(I_{k}\). In particular, since one has the equality
\[
e_{j}(S')
=e_{j}(S) + Z_{d}e_{j-1}(S),
\]
one deduces that 
\[
e_{j}(S) \equiv Z_{d}e_{j-1}(S) \mod I_{k}.
\]
Repeating the argument until the index \(j-k-1\) (which is possible, as \(j-k > (i+1)-d_{i+1}(\mu)\)), one finds that
\[
e_{j}(S) 
\equiv 
Z_{d}e_{j-1}(S)
\equiv 
Z_{d}^2 e_{j-2}(S)
\equiv
\dotsb
\equiv Z_{d}^{k+1} e_{j-k-1}(S)
\mod I_{k}.
\]
But as \(Z_{d}^{k+1}\) belongs to \(I_{k}\), this shows that \(e_{j}(S)\) does indeed belong to \(I_{k}\). This finishes the proof.
\end{proof}
As an immediate corollary of the Propositions \ref{prop: reformulation} and  \ref{prop: DCP}, we therefore have:
\begin{corollary}
The invariant tensors \(E^{U}\) identify with the \(\mu_{k}\)-harmonic polynomials.
\end{corollary}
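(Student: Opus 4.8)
The plan is simply to concatenate the two preceding propositions, so there is essentially nothing new to do. First I would recall that, by Proposition \ref{prop: reformulation}, the explicit linear map \(g\) of Section \ref{sse: harmonic} restricts to an isomorphism
\[
E^{U}=(F_{k}^{\otimes d})^{U_{k}}\;\simeq\;I_{k}^{\perp},
\qquad
I_{k}=I+(Z_{1}^{k+1},\dotsc,Z_{d}^{k+1})\subset\C[Z_{1},\dotsc,Z_{d}].
\]
Then, writing the euclidean division \(d=q(k+1)+r\) and forming the partition \(\mu_{k}=(q,\dotsc,q,q+1,\dotsc,q+1)\) as in Section \ref{ssse: DCP}, Proposition \ref{prop: DCP} yields the equality of ideals \(I_{k}=I_{\mu_{k}}\). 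Taking orthogonals preserves this equality, so \(I_{k}^{\perp}=I_{\mu_{k}}^{\perp}\), and the right-hand side is, by Definition \ref{def: harmonic}, exactly the space of \(\mu_{k}\)-harmonic polynomials. Composing, \(g\) identifies \(E^{U}\) with the space of \(\mu_{k}\)-harmonic polynomials, which is the assertion.

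There is no genuine obstacle at this stage; all the substance lies in Propositions \ref{prop: reformulation} and \ref{prop: DCP}. The only point I would be careful to record is that the identification is not merely an abstract isomorphism of vector spaces but is realized through the concrete map \(g\) (sending the basis vector \(J^{\mathbi{\alpha}}f\) to the monomial \(\tfrac{Z_{1}^{k-\alpha_{1}}}{(k-\alpha_{1})!}\dotsb\tfrac{Z_{d}^{k-\alpha_{d}}}{(k-\alpha_{d})!}\)); keeping track of \(g\) rather than of the isomorphism class is what makes the corollary usable later, e.g. for transporting the standard basis of \(\mu_{k}\)-harmonic polynomials indexed by \(\ST(\mu_{k})\) back to an explicit basis of \(E^{U}\), and thence to explicit generators of \((V^{\Diff})^{(k)}\). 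So in the written proof I would phrase the identification at the level of \(g\).
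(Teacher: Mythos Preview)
Your proposal is correct and matches the paper's approach exactly: the paper states this corollary as an immediate consequence of Propositions \ref{prop: reformulation} and \ref{prop: DCP} without further proof, and your argument is precisely the concatenation of those two results. Your additional remark about tracking the explicit map \(g\) is well-taken and indeed relevant for the subsequent use in Section \ref{sse: structural}, though the paper leaves this implicit.
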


\subsection{A structural result concerning \(\mu\)-harmonic polynomials}
\label{sse: structural}

\subsubsection{General strategy to study \(\mu\)-harmonic polynomials}~
\label{ssse: strat}
Keeping the notations introduced in the previous Section \ref{sse: DCP}, we therefore wish to understand the \(\mu_{k}\)-harmonic polynomials (i.e. the vector space \(I_{k}^{\perp}\)), and in particular find a \textsl{simple} basis (a simple generating set is actually enough for our purposes). Here, as our ultimate goal is to prove a finite generation statement, the adjective \textsl{simple} means that the polynomials in the basis should be as reducible as possible: if the meaning of such a statement is not clear for now, it will become clear in the next Section \ref{se: fg}, where we will prove the finite generation of \((V^{\Diff})^{(k)}\).

The strategy to attack such a problem is rather standard, but technically involved (as the paper \cite{BG} may show). It is based on the following general result:
\begin{theorem}[\cite{Oberst96}]
\label{thm: PDE}
For any zero-dimensional ideal \(J \subset \C[Z_{1}, \dotsc, Z_{d}]\), one has the equality
\[
\dim_{\C} J^{\perp}
=
\dim_{\C} \C[Z_{1}, \dotsc, Z_{d}]/J.
\]
\end{theorem}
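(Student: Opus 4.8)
The plan is to deduce this from \emph{Macaulay duality}: one realises $J^{\perp}$ inside a perfect \textsl{apolarity} pairing and thereby reduces the dimension count to finite-dimensional linear algebra. Introduce the ring of constant-coefficient operators $S=\C[x_{1},\dots,x_{d}]$ acting on $T=\C[Z_{1},\dots,Z_{d}]$ via $x_{i}\mapsto\partial/\partial Z_{i}$, together with the bilinear pairing
\[
\langle Q,P\rangle:=\bigl(Q(\partial)\,P\bigr)_{|Z=0}.
\]
On monomials $\langle x^{\alpha},Z^{\beta}\rangle=\alpha!\,\delta_{\alpha,\beta}$, so — we are in characteristic $0$ — it restricts, for each $m$, to a perfect pairing $S_{m}\times T_{m}\to\C$, with $S_{m}$ and $T_{n}$ orthogonal for $m\neq n$. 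The first observation is that, because $J$ is an \emph{ideal}, the space $J^{\perp}$ of the statement equals the annihilator $\{P\in T : \langle Q,P\rangle=0\text{ for all }Q\in J\}$ of $J$ for $\langle\,,\,\rangle$: indeed the polynomial $Q(\partial)P$ vanishes identically if and only if all its partial derivatives vanish at the origin, i.e. $\langle x^{\gamma}Q,P\rangle=0$ for all $\gamma$, and $x^{\gamma}Q\in J$.

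The core case is that of an ideal $J$ primary to $\mathfrak{m}=(x_{1},\dots,x_{d})$; this already covers every \emph{homogeneous} zero-dimensional ideal, hence in particular the ideals $I_{k}$ to which the theorem will be applied. Here $\mathfrak{m}^{N}\subset J$ for some $N\in\N$, which has two consequences. First, $J^{\perp}\subset(\mathfrak{m}^{N})^{\perp}=T^{\leq N-1}$, the polynomials of degree $<N$. Second, truncation $\pi_{\leq N-1}\colon S\to S^{\leq N-1}$ maps $J$ \emph{onto} $J\cap S^{\leq N-1}$, because $Q-\pi_{\leq N-1}(Q)\in\mathfrak{m}^{N}\subset J$ for every $Q\in S$. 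Since $\langle Q,P\rangle=\langle\pi_{\leq N-1}(Q),P\rangle$ as soon as $\deg P<N$, the space $J^{\perp}$ is exactly the orthogonal of $J\cap S^{\leq N-1}\subset S^{\leq N-1}$ for the \emph{finite-dimensional} perfect pairing $S^{\leq N-1}\times T^{\leq N-1}\to\C$, whence
\[
\dim_{\C}J^{\perp}=\dim_{\C}S^{\leq N-1}-\dim_{\C}(J\cap S^{\leq N-1})=\dim_{\C}\bigl(S^{\leq N-1}/(J\cap S^{\leq N-1})\bigr);
\]
and $S^{\leq N-1}/(J\cap S^{\leq N-1})\cong(S/\mathfrak{m}^{N})\big/(J/\mathfrak{m}^{N})\cong S/J$ gives the stated equality in this case.

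For a general zero-dimensional $J$ one localises: a primary decomposition $J=\bigcap_{i}Q_{i}$, with $Q_{i}$ primary to a point $p_{i}\in V(J)$, together with the Chinese Remainder Theorem gives $S/J\cong\bigoplus_{i}S/Q_{i}$, and translating each $p_{i}$ to the origin brings the local factors into the situation above; it then remains to identify $J^{\perp}$ with $\bigoplus_{i}Q_{i}^{\perp}$ and add up dimensions. I expect this identification to be the main obstacle. Its essential content is that an ideal is recovered as the \emph{double} orthogonal $\bigl(J^{\perp}\bigr)^{\perp}$ of its inverse system — a genuine statement, whose $\mathfrak{m}$-primary instance follows again from the finite-dimensional duality above (pass to $S/\mathfrak{m}^{N}$), but which must be combined with the translation step; and one must be careful there, since translating the polynomials $P$ does not literally preserve $\langle\,,\,\rangle$, only the operator-theoretic condition defining $Q^{\perp}$. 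In any case the easy half is immediate and uniform: any finitely many linearly independent elements of $J^{\perp}$ lie in some $T^{\leq m}$, so that $\dim_{\C}J^{\perp}\leq\dim_{\C}\bigl(S^{\leq m}/(J\cap S^{\leq m})\bigr)\leq\dim_{\C}S/J$, and only the reverse inequality requires the apolarity machinery.
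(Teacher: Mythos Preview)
The paper does not give its own proof of this statement; it is quoted from \cite{Oberst96} and used only as a black box, applied exclusively to the homogeneous ideals $I_{\mu}$. Your apolarity argument for the $\mathfrak{m}$-primary case is correct and self-contained, and since every zero-dimensional homogeneous ideal is $\mathfrak{m}$-primary, this already covers every use made of the theorem in the paper.

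Your reduction of the general case to the $\mathfrak{m}$-primary one, however, cannot be completed, because the statement \emph{as written}, with $J^{\perp}$ taken inside the polynomial ring, is in fact false when the support of $J$ is not reduced to the origin. Take $d=1$ and $J=(Z_{1}-a)\subset\C[Z_{1}]$ with $a\neq0$: then $\dim_{\C}\C[Z_{1}]/J=1$, while $J^{\perp}=\{P\in\C[Z_{1}]:P'=aP\}=\{0\}$. Your own translation computation explains the obstruction: shifting $Q_{i}$ (primary at $p_{i}$) to the origin conjugates $Q(\partial)$ into $e^{-p_{i}\cdot Z}\,Q(\partial)\,e^{p_{i}\cdot Z}$, so the polynomial solutions of the shifted system correspond to the \emph{exponential-polynomial} solutions $e^{p_{i}\cdot Z}P$ of the original one, not to its polynomial solutions. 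The version valid for arbitrary zero-dimensional $J$ --- which is what the cited reference actually proves --- takes $J^{\perp}$ in a larger space (formal power series, $C^{\infty}$, or more generally an injective cogenerator over $\C[\partial]$), where those exponentials exist. Your ``easy half'' $\dim_{\C}J^{\perp}\leq\dim_{\C}\C[Z_{1},\dotsc,Z_{d}]/J$ is correct in polynomials and is the most one can say there; the obstacle you anticipated in the localisation step is therefore genuine.
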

The strategy then goes as follows:
\begin{itemize}
\item{} exhibit a free family in \(J^{\perp}\) of cardinal \(M\);
\item{} exhibit a generating set of monomials in \(\C[Z_{1}, \dotsc, Z_{d}]/J\) of same cardinal \(M\).
\end{itemize}
By the above Theorem \ref{thm: PDE}, this implies that the free family in \(J^{\perp}\) is indeed a basis.

During our study of \(I_{k}^{\perp}\), the first part of the above strategy turned out to be quite manageable, as we knew what to expect. The surprisingly hard part was the second one:
we must admit that we spent quite some time trying to figure out a proof of the dimension of the quotient ring \(\C[Z_{1}, \dotsc, Z_{d}]/I_{k}\), which is
\[
\frac{d!}{(q!)^{k+1-r}((q+1)!)^{r}},
\]
without much success (at that time, we were obviously not aware of the paper \cite{BG}).
It turned out that, in our setting, the family of ideals considered was not big enough to proceed by induction (at least we did not find a way to do so). In the setting of DeConcini--Procesi ideals, there was enough room to do so, as explained in \cite{BG}[Section 4]. In the next Section \ref{ssse: structural}, we state a structural result from \cite{BG} that we will need.

\subsubsection{The structural result}
\label{ssse: structural}
In order to state it, let us first introduce some notations. If \(C=(\alpha_{1}, \dotsc, \alpha_{k})^{\intercal}\) is a column of integers, with \(\alpha_{i} \in \{1, \dotsc, d\}\), denote by \(\Delta(C)\) the following Vandermonde determinant:
\[
\Delta(C)
:=
\det 
\begin{pmatrix}
1 & Z_{\alpha_{1}} & \dotsc &Z_{\alpha_{1}}^{k-1}
\\
1 & Z_{\alpha_{2}} & \dotsc &Z_{\alpha_{2}}^{k-1}
\\
\cdot & \cdot & & \cdot
\\
\cdot & \cdot & & \cdot
\\
1 & Z_{\alpha_{k}} & \dotsc &Z_{\alpha_{k}}^{k-1}
\end{pmatrix}
\in \C[Z_{1}, \dotsc, Z_{d}].
\]
Now, for any standard tableau \(T \in \ST(\mu)\), define
\[
\Delta(T)
:=
\Delta(C_{1})
\dotsc 
\Delta (C_{d}),
\]
where the \(C_{i}\)'s are the columns of \(T\), and where, by convention, one sets \(\Delta(\emptyset)=1\).
As a corollary of \cite{BG}[Theorem 4.4], one deduces the following:
\begin{theorem}[\cite{BG}]
\label{thm: harmonic structural}
The \(\mu\)-harmonic polynomials are spanned, as a \(\C[\partial]\)-module, by
\[
\big(
\Delta(T)
\big)_{T \in \ST(\mu)}.
\]
\end{theorem}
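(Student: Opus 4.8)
The statement asserts that the span of $\big(\Delta(T)\big)_{T\in\ST(\mu)}$ under the action of the constant-coefficient differential operators $\C[\partial]=\C[\partial_{Z_1},\dots,\partial_{Z_d}]$ equals the space $I_\mu^\perp$ of $\mu$-harmonic polynomials. The plan is to derive this as a corollary of the structural result \cite{BG}[Theorem 4.4], which (in the form I expect) provides a vector-space basis of $I_\mu^\perp$ indexed by standard tableaux — typically each basis element being, up to lower-order terms, a monomial of the same multidegree as $\Delta(T)$ — together with the dimension count $\dim_\C I_\mu^\perp=\dim_\C \C[Z]/I_\mu=\frac{d!}{\prod (\mu_i')!}$ (matching $|\ST(\mu)|$ by the hook-length / RSK count). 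So the argument has two halves: first, show each $\Delta(T)$ and all its derivatives lie in $I_\mu^\perp$; second, show the $\C[\partial]$-span of the $\Delta(T)$'s is all of $I_\mu^\perp$.

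For the first half (containment), I would argue that $I_\mu^\perp$ is stable under $\C[\partial]$: indeed, if $Q(\partial)P=0$ for all $Q\in I_\mu$, then for any monomial differential operator $D$ one has $Q(\partial)\big(D\,P\big)=D\big(Q(\partial)P\big)=0$ since constant-coefficient operators commute. Thus it suffices to check $\Delta(T)\in I_\mu^\perp$, i.e. $e_j(S)(\partial)\,\Delta(T)=0$ for every generator $e_j(S)\in\mathcal C_\mu$ with $|S|=i$ and $i-d_i(\mu)<j\le i$. Here the key point is that $\Delta(T)$ is a product of Vandermonde determinants $\Delta(C_\ell)$ in disjoint blocks of variables; a generator $e_j(S)$ with $|S|=i$ must, by a pigeonhole argument on $d_i(\mu)=\mu_1'+\dots+\mu_i'=\,$(sum of the $i$ shortest column-lengths), have "too many" of its variables concentrated in a single column block relative to the degree $k-1$ in which the Vandermonde is alternating — so applying $e_j(S)(\partial)$ either repeats a partial derivative within an alternating block (killing it) or exceeds the available degree. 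This is the one genuinely combinatorial verification; I expect it to mirror the argument in \cite{BG} and to hinge precisely on the defining inequality $j>i-d_i(\mu)$.

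For the second half (spanning), I would invoke that the $\C$-span of $\big\{D\,\Delta(T)\,:\,T\in\ST(\mu),\ D\in\C[\partial]\big\}$ contains, for each $T$, the top-degree piece of $\Delta(T)$ and, upon differentiating, a triangular family of monomials; combined with the fact from \cite{BG} that the leading monomials of the $\Delta(T)$ (for a suitable monomial order) are exactly a basis of $\C[Z]/I_\mu$, a triangularity/leading-term argument shows the $\C[\partial]$-span has dimension at least $|\ST(\mu)|=\dim_\C I_\mu^\perp$. Since by the first half the span is contained in $I_\mu^\perp$, equality follows by dimension count, using Theorem \ref{thm: PDE}. The main obstacle is the first half: making precise why each generator $e_j(S)$ of the DeConcini--Procesi ideal annihilates $\Delta(T)$, which requires carefully tracking how the column structure of a standard tableau of shape $\mu$ interacts with the cardinality-graded generators $e_j(S)$ — this is exactly the content imported from \cite{BG}[Theorem 4.4], so in practice the "proof" is a careful citation plus the $\C[\partial]$-stability observation and the dimension bookkeeping.
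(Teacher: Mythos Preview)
The paper does not give a proof of this theorem: it is stated as a direct corollary of \cite{BG}[Theorem 4.4], with no further argument supplied. So there is no detailed ``paper's proof'' to compare your proposal against beyond the bare citation. Your overall architecture (show \(\C[\partial]\)-stability of \(I_\mu^\perp\), check \(\Delta(T)\in I_\mu^\perp\), then match dimensions) is a reasonable way to \emph{unpack} such a citation, and the first half is fine: constant-coefficient operators commute, so \(I_\mu^\perp\) is indeed \(\C[\partial]\)-stable, and the harmonicity of each \(\Delta(T)\) is the standard DeConcini--Procesi/Tanisaki computation.

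However, your spanning argument contains a genuine error: you assert \(\dim_\C I_\mu^\perp=\lvert\ST(\mu)\rvert\), and this is false in general. By Theorem~\ref{thm: PDE} one has \(\dim_\C I_\mu^\perp=\dim_\C \C[Z]/I_\mu\), and the latter equals the multinomial \(d!/\prod_i \mu_i!\) (this is the dimension of the Springer/Garsia--Procesi module; compare the explicit value \(\frac{d!}{(q!)^{k+1-r}((q+1)!)^{r}}\) quoted in Section~\ref{ssse: strat}), whereas \(\lvert\ST(\mu)\rvert\) is the hook-length number \(f^\mu\). Already for \(d=3\) and \(\mu=(0,1,2)\) one has \(\dim I_\mu^\perp=3\) but \(\lvert\ST(\mu)\rvert=2\); concretely \(\Delta(T_1)=Z_1-Z_2\), \(\Delta(T_2)=Z_1-Z_3\), and one genuinely needs their derivatives to reach the third dimension. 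Consequently your claim that ``the leading monomials of the \(\Delta(T)\) are exactly a basis of \(\C[Z]/I_\mu\)'' cannot hold either, since there are too few of them. The actual spanning argument in \cite{BG} is more delicate: it produces, from the \(\Delta(T)\)'s \emph{together with their derivatives}, an explicit family whose leading terms realise a full monomial basis of \(\C[Z]/I_\mu\); this is precisely the content one must import from \cite{BG}[Theorem 4.4], and it does not reduce to the cardinality match you propose.
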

\begin{remark}
Note that, by applying a monomial operator of the form
\[
\frac{\partial^{\alpha_{1}+\dotsc +\alpha_{d}}}{\partial Z_{1}^{\alpha_{1}} \dotsb \partial Z_{d}^{\alpha_{d}}}
\]
to \(\Delta(T)\), one keeps a polynomial that writes as a product (simply because the variables are separated). In particular, one indeed obtains a \textsl{simple} generating family of \(I_{\mu}^{\perp}\), in the sense of Section \ref{ssse: strat}.
\end{remark}

\begin{remark}
\label{rem: obs}
Note also that for \(k=(d-1)\), there is only one Vandermonde determinant \(W\), and one has:
\[
\C[\partial] \cdot W
=
\Vect
\{
W_{\alpha_{1}, \dotsc, \alpha_{d}}
\
|
\
0 \leq \alpha_{i} \leq (i-1)
\}.
\]
\end{remark}

Let us now apply this result in the case of \(I_{k}^{\perp}=I_{\mu_{k}}^{\perp}\simeq E^{U}=(F^{\otimes d})^{U_{k}}\), and let us suppose that \(d \geq (k+1)\).  In order to state the corollary in a convenient fashion, let us introduce some notations. First, let us explicitly distinguish the \(F\)'s appearing in the tensor product by writing
\[
E^{U}
=
(F^{1} \otimes \dotsb \otimes F^{d})^{U_{k}}.
\]
Now, for any permutation \(\sigma \in \mathcal{S}_{d}\), denote by
\[
E_{\sigma}:=(F^{\sigma(1)} \otimes \dotsb \otimes F^{\sigma(k+1)})^{U_{k}} 
\otimes
\dotsb 
\otimes 
(F^{\sigma((q-1)(k+1)+1)} \otimes \dotsb \otimes F^{\sigma(q(k+1))})^{U_{k}}
 \otimes 
 (F^{\sigma(q(k+1)+1)} \otimes \dotsb \otimes F^{\sigma(d)})^{U_{k}}.
\]
Observe that there is a natural map
\[
f_{\sigma}: E_{\sigma} \to E^{U},
\]
obtained simply by reordering the terms.
The previous Theorem \ref{thm: harmonic structural} now readily implies the following:
\begin{corollary}
\label{cor: structural}
The natural map induced by the \(f_{\sigma}\)'s
\[
 \bigoplus_{\sigma \in \mathcal{S}_{d}} E_{\sigma} \to E^{U}
\]
is surjective.
\end{corollary}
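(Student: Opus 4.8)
The plan is to deduce the statement directly from the structural Theorem~\ref{thm: harmonic structural}, once the dictionaries of Sections~\ref{sse: harmonic} and~\ref{sse: DCP} are unwound. Recall that $E^{U}\simeq I_{k}^{\perp}=I_{\mu_{k}}^{\perp}$ is the space of $\mu_{k}$-harmonic polynomials, which by Theorem~\ref{thm: harmonic structural} is spanned, as a $\C[\partial]$-module, by the polynomials $\Delta(T)=\Delta(C_{1})\dotsb\Delta(C_{q+1})$, $T\in\ST(\mu_{k})$, where $C_{1},\dotsc,C_{q+1}$ are the columns of $T$. Since $\mu_{k}'=(r,k+1,\dotsc,k+1)$ with $q$ entries equal to $k+1$, every $T\in\ST(\mu_{k})$ has $q$ columns of size $k+1$ and one column of size $r$; forgetting the order of the entries within a column, these columns partition $\{1,\dotsc,d\}$ into $q$ blocks $B_{1},\dotsc,B_{q}$ of size $k+1$ and one block $B_{q+1}$ of size $r$. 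To such a partition one attaches any permutation $\sigma\in\mathcal{S}_{d}$ realising it, i.e. with $B_{1}=\{\sigma(1),\dotsc,\sigma(k+1)\},\dotsc,B_{q+1}=\{\sigma(q(k+1)+1),\dotsc,\sigma(d)\}$.

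First I would show that each column factor $\Delta(C_{j})$ is, under $g$, an invariant tensor carried by its block. The polynomial $\Delta(C_{j})$ is an ordinary Vandermonde determinant in the $m_{j}:=|C_{j}|\leq k+1$ variables indexed by $B_{j}$, hence has degree $\leq m_{j}-1\leq k$ in each of them, so it lies in the image of $g$; moreover, applying Remark~\ref{rem: obs} to this sub-system of $m_{j}$ variables (with $m_{j}-1$ in the role of $k$), together with the stabilisation $(F_{m_{j}-1}^{\otimes m_{j}})^{U_{m_{j}-1}}=(F_{k}^{\otimes m_{j}})^{U_{k}}$ established in Section~\ref{ssse: alternative}, shows that the entire $\C[\partial_{B_{j}}]$-module generated by $\Delta(C_{j})$ is the image under $g$ of the block invariants $\big(\bigotimes_{i\in B_{j}}F^{i}\big)^{U_{k}}$.

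Next I would observe that a monomial operator $\prod_{i=1}^{d}\partial_{Z_{i}}^{\beta_{i}}$ respects the factorisation $\Delta(T)=\prod_{j}\Delta(C_{j})$, since distinct columns involve disjoint variables: $\prod_{i}\partial_{Z_{i}}^{\beta_{i}}\Delta(T)=\prod_{j}\big(\prod_{i\in B_{j}}\partial_{Z_{i}}^{\beta_{i}}\big)\Delta(C_{j})$, and by the previous step each factor lies in $g$ of the $j$-th block invariants. As $g\circ f_{\sigma}$ is precisely the operation ``multiply the block polynomials together'' (as one checks on pure tensors $\bigotimes_{i}J^{\beta_{i}}f$), this means $\prod_{i}\partial_{Z_{i}}^{\beta_{i}}\Delta(T)\in g(\Image f_{\sigma})$. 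Expanding an arbitrary $Q\in\C[Z_{1},\dotsc,Z_{d}]$ into monomials and using linearity then gives $\C[\partial]\cdot\Delta(T)\subseteq g(\Image f_{\sigma})$ for every $T$; summing over $T\in\ST(\mu_{k})$ and invoking Theorem~\ref{thm: harmonic structural} yields $E^{U}=\sum_{\sigma\in\mathcal{S}_{d}}\Image f_{\sigma}$, which is exactly the surjectivity claimed.

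The step I expect to be the main obstacle is the bookkeeping that pins down the three identifications at once --- invariant tensors in a block, $\mu$-harmonic (Vandermonde) polynomials, and the isomorphism $g$ --- and in particular the verification that the $\C[\partial]$-span of $\Delta(C_{j})$ is the \emph{full} space of block invariants rather than a proper submodule. Once that is granted, the rest is the elementary remark that differentiation does not mix variables attached to different columns of $T$.
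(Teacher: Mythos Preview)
Your proof is correct and follows the same route as the paper's own argument: invoke Theorem~\ref{thm: harmonic structural} together with Remark~\ref{rem: obs}, and use that differentiation respects the column factorisation of $\Delta(T)$ because distinct columns involve disjoint variables. One small remark: the obstacle you flag at the end is not actually one --- for surjectivity you only need the inclusion $\C[\partial_{B_j}]\cdot\Delta(C_j)\subseteq g\big((\bigotimes_{i\in B_j}F^i)^{U_k}\big)$, which is immediate since $I_k^\perp$ in the block variables is $\partial$-stable; the reverse inclusion (true by Remark~\ref{rem: obs}) plays no role here.
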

\begin{proof}
It suffices to observe (keeping in mind the above Remark \ref{rem: obs}) that Theorem \ref{thm: harmonic structural} implies in particular that the \(\mu_{k}\)-harmonic polynomials are spanned by the multi-linear polynomials
\begin{align*}
\Big(
\big\{W_{(\alpha_{1}, \dotsc, \alpha_{k+1})}(Y_{\sigma(1)}^{(\bigcdot)}, \dotsc, Y_{\sigma(k+1)}^{(\bigcdot)})
\
|
\
0 \leq \alpha_{i} \leq (i-1)
\big\}
\times 
\dotsb
\\
\times
\big\{W_{(\alpha_{1}, \dotsc, \alpha_{k+1})}(Y_{\sigma((q-1)(k+1)+1)}^{(\bigcdot)}, \dotsc, Y_{\sigma(q(k+1))}^{(\bigcdot)})
\
|
\
0 \leq \alpha_{i} \leq (i-1)
\big\}
\\
\times
\big\{W_{(\alpha_{1}, \dotsc, \alpha_{r})}(Y_{\sigma(q(k+1)+1)}^{(\bigcdot)}, \dotsc, Y_{\sigma(d)}^{(\bigcdot)})
\
|
\
0 \leq \alpha_{i} \leq (i-1)
\big\}
\Big)_{\sigma},
\end{align*}
where \(\sigma\) runs over the group of permutations \(\mathcal{S}_{d}\).

\end{proof}

\section{Finite generation of \((V^{\Diff})^{(k)}\), and generators}
\label{se: fg}

\subsection{Finite generation}
\label{sse: fg}
We now have everything we need to prove the main result of the paper: 

\begin{theorem}
\label{thm: finite generation}
The algebra \((V^{\Diff})^{(k)}\) is finitely generated. More precisely, if one fixes  for any \(1 \leq i \leq k+1\) a family 
\[
 \mathcal{B}_{i} \subset (V_{i}^{\Diff})^{(i-1)} 
 \]
  inducing a basis of \((V_{i}^{\Diff})^{(i-1)}/(V_{i}^{\Diff})^{(i-2)}\), then one has the equality:
  \[
  (V^{\Diff})^{(k)}
  =
  \C[\mathcal{B}_{1}, \mathcal{B}_{2}, \dotsc, \mathcal{B}_{k+1}].
  \]
\end{theorem}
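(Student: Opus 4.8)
The plan is to reduce, via Lemma \ref{lemma: simple}, the computation of \((V^{\Diff})^{(k)}\) to the structure of invariant tensors provided by Corollary \ref{cor: structural}, and then to run a strong induction on the degree \(d\). The inclusion \(\C[\mathcal{B}_1,\dotsc,\mathcal{B}_{k+1}] \subseteq (V^{\Diff})^{(k)}\) is immediate: each \(\mathcal{B}_i\) lies in \((V_i^{\Diff})^{(i-1)} \subseteq (V^{\Diff})^{(k)}\) for \(i \leq k+1\), and \((V^{\Diff})^{(k)} = V^{(k)} \cap V^{\Diff}\) is a subalgebra of \(V\); moreover each \(\mathcal{B}_i\) is finite, since \((V_i^{\Diff})^{(i-1)} \subseteq V_i^{\Diff}\) is finite-dimensional (of dimension \((N+1)^i\) by Section \ref{sse: alternative}). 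Hence, once the reverse inclusion is established, finite generation follows.

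For the reverse inclusion it suffices, by the grading, to show \((V_d^{\Diff})^{(k)} \subseteq \C[\mathcal{B}_1,\dotsc,\mathcal{B}_{k+1}]\) for every \(d \geq 1\), and I would prove this for all \(k\) at once by strong induction on \(d\). The case \(d=1\) is clear since \((V_1^{\Diff})^{(k)} = \Vect(\mathcal{B}_1)\). For \(d \geq 2\), recalling from Section \ref{sse: alternative} that \((V_d^{\Diff})^{(k)} = (V_d^{\Diff})^{(d-1)}\) whenever \(k \geq d-1\), it is enough to treat \(0 \leq k \leq d-1\). \emph{Case \(k \leq d-2\):} identify \((V_d^{\Diff})^{(k)} \simeq \big(S^d(F_k^{\oplus(N+1)})\big)^{U_k}\); given an element, lift it by Lemma \ref{lemma: simple} to an invariant tensor \(\tilde P \in \big((F_k^{\oplus(N+1)})^{\otimes d}\big)^{U_k}\), decompose the ambient space into the \(U_k\)-stable summands \(\simeq F_k^{\otimes d}\) indexed by maps \(\{1,\dotsc,d\}\to\{0,\dotsc,N\}\), and on each summand apply Corollary \ref{cor: structural} (legitimate as \(d \geq k+2 \geq k+1\)), writing \(\tilde P\) there as a sum of reorderings \(f_\sigma\) of products of block-invariants, each block of size either \(k+1\) or \(r\) with \(d = q(k+1)+r\), hence of size \(m \leq k+1 \leq d-1 < d\). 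Applying the projection \(p\) on each summand is the substitution \(Y_j^{(\ell)} \mapsto X_{i_j}^{(\ell)}\) of Section \ref{ssse: alternative}, which turns each block-invariant into a polynomial \(W_{\mathbi{\alpha}}\) of Definition--Proposition \ref{def-prop: inv rep}, i.e.\ an element of \((V_m^{\Diff})^{(m-1)}\) with \(m < d\); by the induction hypothesis in degree \(m\), \(W_{\mathbi{\alpha}} \in \C[\mathcal{B}_1,\dotsc,\mathcal{B}_m] \subseteq \C[\mathcal{B}_1,\dotsc,\mathcal{B}_{k+1}]\), hence so is each product and so is \(p(\tilde P)\). \emph{Case \(k = d-1\):} by definition of \(\mathcal{B}_d\) one has \((V_d^{\Diff})^{(d-1)} = \Vect(\mathcal{B}_d) + (V_d^{\Diff})^{(d-2)}\), and the already-treated case \(k=d-2\) gives \((V_d^{\Diff})^{(d-2)} \subseteq \C[\mathcal{B}_1,\dotsc,\mathcal{B}_{d-1}]\); thus \((V_d^{\Diff})^{(d-1)} \subseteq \C[\mathcal{B}_1,\dotsc,\mathcal{B}_d]\), closing the induction.

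The delicate point is the degree bookkeeping that makes the induction close: it works precisely because for \(k \leq d-2\) the block sizes \(k+1\) and \(r\) are strictly below \(d\), so every factor \(W_{\mathbi{\alpha}}\) has degree \(< d\); the boundary case \(d = k+1\) is the one where no degree drop is available, and there one falls back on the chosen lifts \(\mathcal{B}_d\) together with the strictly-lower-order part \((V_d^{\Diff})^{(d-2)}\). All the substantive input — that \(U_k\)-invariant tensors decompose through the maps \(f_\sigma\), i.e.\ Corollary \ref{cor: structural}, resting on Theorem \ref{thm: harmonic structural} — is already available, so the remaining work is essentially to verify that \(p\) sends each block-invariant to a generator \(W_{\mathbi{\alpha}}\) of the asserted degree and order, which is exactly the substitution computation recorded in Section \ref{ssse: alternative}.
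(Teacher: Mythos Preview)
Your proof is correct and uses the same substantive inputs as the paper's own argument (Lemma~\ref{lemma: simple}, Corollary~\ref{cor: structural}, and the substitution computation of Section~\ref{ssse: alternative}). The only difference is organizational: the paper first establishes surjectivity of \(S^{\bigcdot}(V_{1}^{\Diff}\oplus\dotsb\oplus V_{k+1}^{\Diff})\to (V^{(k)})^{\Diff}\) in one shot and then runs an induction on \(k\) to pass from the full spaces \(V_{i}^{\Diff}\) to the lifts \(\mathcal{B}_{i}\), whereas you fold both steps into a single strong induction on \(d\).
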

\begin{proof}
Fix \(d \geq (k+1)\). By Lemma \ref{lemma: simple}, there exists a natural surjective map
\[
p: ((\underbrace{F\oplus \dotsb \oplus F}_{\times (N+1)})^{\otimes d})^{U} 
\twoheadrightarrow
S^{d}(\underbrace{F\oplus \dotsb \oplus F}_{\times (N+1)})^{U}
\simeq (V_{d}^{(k)})^{\Diff},
\]
where one recalls that \(F:=F_{k}\) and \(U:=U_{k}\) (see Section \ref{sse: harmonic} for notations).
The space on the left decomposes as a direct sum of summands of the form
\[
(F^{\otimes d})^{U}.
\]
By Corollary \ref{cor: structural}, for each of these summands, there exists a natural surjective map
\[
\pi:
\bigoplus_{\sigma \in \mathcal{S}_{d}} 
\big((F^{\otimes (k+1)})^{U_{k}}\big)^{\otimes q} \otimes (F^{\otimes r})^{U_{k}}
\twoheadrightarrow
(F^{\otimes d})^{U},
\]
where one recalls that \(d=q(k+1)+r\) is the euclidean division of \(d\) by \((k+1)\). 
Note that \((F^{\otimes r})^{U_{k}}\) actually identifies with \((F_{r-1}^{\otimes r})^{U_{r}}\) (recall that \(F_{r-1}\) is seen in a natural fashion as a sub-vector space of \(F=F_{k}\)).

Using the notations introduced during Section \ref{ssse: alternative}, consider, for any \(0 \leq i \leq k\), the basis of \((F_{i}^{\otimes (i+1)})^{U_{i}}\) given by
\[
(W_{(\alpha_{1}, \dotsc, \alpha_{i+1})})_{0 \leq \alpha_{j} \leq (j-1)}.
\]
Recall that the element \(W_{(\alpha_{1}, \dotsc, \alpha_{i+1})}\) identifies with a \textsl{multi-linear} polynomial of degree \((i+1)\) in the variables \((Y_{s}^{(t)})_{1 \leq s \leq (i+1); 0 \leq t \leq i}\).
Now, observe that the image of 
\[
W_{(\alpha_{1}^{1}, \dotsc, \alpha^{1}_{k+1})} \otimes \dotsb \otimes W_{(\alpha_{1}^{q}, \dotsc, \alpha^{q}_{k+1})} \otimes W_{(\alpha_{1}, \dotsc, \alpha_{r})}
\]
under the map \(p \circ \pi\) corresponds to the product polynomial 
\[
W_{(\alpha_{1}^{1}, \dotsc, \alpha^{1}_{k+1})}^{\subs}
\times
\dotsb
\times 
W_{(\alpha_{1}^{q}, \dotsc, \alpha^{q}_{k+1})}^{\subs}
\times 
W_{(\alpha_{1}, \dotsc, \alpha_{r})}^{\subs}
\in (V_{d}^{\Diff})^{(k)},
\]
where, in each term of the product, one has performed a certain substitution of variables of the form 
\[
\big(
(Y_{i}^{(\ell)})_{0 \leq \ell \leq k}
\mapsto
(X_{f(i)}^{(\ell)})_{0 \leq \ell \leq k}
\big)_{1 \leq i \leq d},
\]
where \(f\colon \{1, \dotsc d\}  \to \{0, \dotsc, N\}\) (the form of the substitution depends on the permutation \(\sigma \in \mathcal{S}_{d}\)).

Observe that the above implies that the natural graded map of algebra
\begin{equation}
\label{eq: surj gen}
S^{\bigcdot}(
V_{1}^{\Diff} \oplus \dotsb \oplus V_{k+1}^{\Diff}
)
\to
(V^{(k)})^{\Diff}
\end{equation}
is surjective: this establishes the sought finite generation. The final assertion in the statement follows from a simple induction on \(k \geq 0\). For \(k=0\), the statement is straightforward. Suppose that it holds for \((k-1) \geq 0\). Decomposing \(V_{k+1}^{\Diff}=\mathcal{B}_{k+1} \oplus S\), where \(S \subset (V_{k+1}^{(k-1)})^{\Diff}\), the above surjective map \eqref{eq: surj gen} shows that
\[
(V^{(k)})^{\Diff}
=
(V^{(k-1)})^{\Diff}[\mathcal{B}_{k+1}].
\]
The result now follows from the induction hypothesis. This finishes the proof.

\end{proof}

\subsection{Study of the generators}~
\label{sse: gen}
In view of the previous Theorem \ref{thm: finite generation}, we have to understand the space
\[
V_{d}^{\Diff}/(V_{d}^{\Diff})^{(d-2)}
=
(V_{d}^{\Diff})^{(d-1)}/(V_{d}^{\Diff})^{(d-2)}
\]
for any \(d \in \N\). Recall (see Section \ref{ssse: alternative}) that the space \(V_{d}^{\Diff}\) is obtained from the space \((F_{d-1}^{\otimes d})^{U_{d-1}}\) as follows:
\begin{itemize}
\item{} consider the basis of \((F_{d-1}^{\otimes d})^{U_{d-1}}\) (of cardinal \(d!\))
\[
(W_{(\alpha_{1}, \dotsc, \alpha_{d})})_{0 \leq \alpha_{j} \leq (j-1)}
\]
introduced in Section \ref{ssse: alternative}, where we recall that each element is identified with a multi-linear polynomial in the variables \((Y_{s}^{(t)})_{1 \leq s \leq d; 0 \leq t \leq (d-1)}\);
\item{}
perform substitutions of the form
\[
(Y_{s}^{(t)})_{1 \leq s \leq d; 0 \leq t \leq (d-1)}
\longmapsto
(X_{f(s)}^{(t)})_{1 \leq s \leq d; 0 \leq t \leq (d-1)},
\]
where \(f: \{1, \dotsc, d\} \to \{0, \dotsc, N\}\).
\end{itemize}
As the order of truncation is obviously preserved by these substitutions, we are first lead to understand the quotient 
\[
(F_{d-1}^{\otimes d})^{U_{d-1}}/(F_{d-2}^{\otimes d})^{U_{d-2}}.
\]
This is the object of the first Section \ref{ssse: gen model}. The study of the generators of \((V^{(k)})^{\Diff}\) is then carried over in the second Section \ref{ssse: gen general}

\subsubsection{The model case}~
\label{ssse: gen model}
In what follows, we are going to extract from the family of multi-linear polynomials
\[
(W_{(\alpha_{1}, \dotsc, \alpha_{d})})_{0 \leq \alpha_{j} \leq (j-1)}
\]
a family inducing a basis of the quotient \((F_{d-1}^{\otimes d})^{U_{d-1}}/(F_{d-2}^{\otimes d})^{U_{d-2}}\).
Let us thus consider the following set of multi-indexes
\[
\Sigma_{d}
:=
\Set{
\mathbi{\alpha} \in \N^{d}
\
|
\
\exists \ 1 \leq i \leq d, \alpha_{i}=0
\
\&
\
\text{
the \((d-1)\)-uple \(\overset{\wedge_{i}}{\mathbi{\alpha}}\) satisfies: \( \ \forall 1 \leq j \leq d-1\), \(\overset{\wedge_{i}}{\alpha_{j}} \leq j-1\)
}
}.
\]
One can partition the set \(\Sigma_{d}\) by considering the last index \(i\) such that \(\alpha_{i}=0\), i.e. by considering for \(1 \leq k \leq d\):
\[
\Sigma_{d}^{k}
:=
\Set{
\mathbi{\alpha} \in \Sigma_{d}
\
|
\
\alpha_{k}=0 
\
\&
\
\forall \ i > k, \alpha_{i} > 0
}.
\]
\begin{remark}~
Note that \(\Sigma_{d}^{1}\) is empty.
Note also that, if \(\mathbi{\alpha} \in \Sigma_{d}^{k}\), then \(\mathbi{\alpha}\) satisfies
\[
\overset{\wedge_{k}}{\alpha_{j}} \leq j-1
\]
for any \(1 \leq j \leq d-1\), so that the condition to be in \(\Sigma_{d}\) is satisfied for the index \(k\).
\end{remark}
One then has the following elementary lemma:
\begin{lemma}
\label{lemma: denombrement}
The set of indexes \(\Sigma_{d}\) has cardinal \(|\Sigma_{d}|=\frac{d!}{2}\).
\end{lemma}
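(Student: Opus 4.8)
The plan is to compute $|\Sigma_d^k|$ for each $k$ separately, using the partition $\Sigma_d = \bigsqcup_{k=1}^{d}\Sigma_d^k$ introduced above, and then to sum (throughout we assume $d \geq 2$). The first step is to record the following explicit description of each piece: a $d$-uple $\mathbi{\alpha}$ belongs to $\Sigma_d^k$ if and only if $\alpha_k = 0$, $\alpha_i > 0$ for all $i > k$, and $\overset{\wedge_k}{\alpha_j} \leq j-1$ for all $1 \leq j \leq d-1$. One implication is precisely the remark above; for the converse, the condition $\overset{\wedge_k}{\alpha_j} \leq j-1$ together with $\alpha_k = 0$ is exactly what is needed for $\mathbi{\alpha}$ to lie in $\Sigma_d$, and since $\alpha_k = 0$ is then the last vanishing entry of $\mathbi{\alpha}$, we get $\mathbi{\alpha} \in \Sigma_d^k$.

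Next I would unwind the condition $\overset{\wedge_k}{\alpha_j} \leq j-1$ into bounds on the entries of $\mathbi{\alpha}$ themselves, keeping track of the index shift occurring at position $k$: since $\overset{\wedge_k}{\alpha_j} = \alpha_j$ for $j < k$ and $\overset{\wedge_k}{\alpha_j} = \alpha_{j+1}$ for $j \geq k$, it becomes $\alpha_j \leq j-1$ for $1 \leq j \leq k-1$ together with $\alpha_m \leq m-2$ for $k+1 \leq m \leq d$. Combining this with $\alpha_k = 0$ and the positivity $\alpha_m \geq 1$ for $m > k$, the entries $\alpha_1, \dots, \alpha_{k-1}$ admit respectively $1, 2, \dots, k-1$ values, $\alpha_k$ is forced, and each $\alpha_m$ with $k+1 \leq m \leq d$ admits $m-2$ values; hence
\[
|\Sigma_d^k| \;=\; (k-1)! \cdot \prod_{m=k+1}^{d}(m-2) \;=\; (k-1)! \cdot \frac{(d-2)!}{(k-2)!} \;=\; (k-1)\,(d-2)!
\]
for $2 \leq k \leq d$ (the product being empty, hence equal to $1$, when $k=d$, and the case $k=2$ being immediate), while $|\Sigma_d^1| = 0$ matches the same formula.

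It then remains to sum:
\[
|\Sigma_d| \;=\; \sum_{k=1}^{d} (k-1)\,(d-2)! \;=\; (d-2)! \sum_{k=1}^{d}(k-1) \;=\; (d-2)!\binom{d}{2} \;=\; \frac{d!}{2},
\]
as claimed. There is no genuine difficulty in this argument; the one place that needs a little care is the bookkeeping in the second paragraph — correctly propagating the shift at position $k$ to obtain the bound $\alpha_m \leq m-2$ (rather than $\alpha_m \leq m-1$) for $m > k$, and not mishandling the extreme values $k=1$ and $k=d$ of the running index.
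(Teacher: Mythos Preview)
Your proof is correct and follows essentially the same approach as the paper: both use the partition \(\Sigma_d=\bigsqcup_k \Sigma_d^k\), count each piece by reading off the admissible range of every coordinate, and sum. The only cosmetic difference is in the algebra—you simplify \(|\Sigma_d^k|\) directly to \((k-1)(d-2)!\) and sum via \(\binom{d}{2}\), whereas the paper rewrites it as \((d-1)!\,\tfrac{k-1}{d-1}\) before summing—but the underlying counting argument is identical.
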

\begin{proof}
Consider the previously introduced partition of \(\Sigma_{d}\)
\[
\Sigma_{d}
=
\Sigma_{d}^{d} \sqcup \Sigma_{d}^{d-1} \sqcup \dotsb \sqcup \Sigma_{d}^{1},
\]
and observe that the following equality holds for any \(1 \leq k \leq d\) (by convention, the empty product equals \(1\)):
\[
|\Sigma_{d}^{k}|
=
(k-1)! \times \prod\limits_{i=k-1}^{d-2} i.
\]
Indeed, to create an index in \(\Sigma_{d}^{k}\), one proceeds as follows:
\begin{itemize}
\item{} there are \(1\) choice for \(\alpha_{1}\), 2 choices for \(\alpha_{2}\),..., \((k-1)\) choices for \(\alpha_{k-1}\);
\item{} one has necessarily \(\alpha_{k}=0\);
\item{} there are \(k-1\) choices for \(\alpha_{k+1}\), \(k\) choices for \(\alpha_{k+2}\),..., \((d-2)\) choices for \(\alpha_{d}\) (because none of theses \(\alpha_{i}'s\) can be zero).
\end{itemize}
Observe that the previous equality can rewritten as:
\[
|\Sigma_{d}^{k}|
=
(d-1)! \times \frac{k-1}{d-1}.
\]
The result then follows from the identity:
\[
\sum\limits_{k=1}^{d} (d-1)! \frac{k-1}{d-1}
=
(d-1)!\frac{d(d-1)}{2(d-1)}
=
\frac{d!}{2}.
\]

\end{proof}

Let us then denote 
\[
\mathcal{F}_{d}
:=
(W_{\mathbi{\alpha}})_{\mathbi{\alpha} \in \Sigma_{d}}.
\]
This sub-family of the basis of \((F_{d-1}^{\otimes d})^{U_{d-1}}\) satisfies the property that we want:

\begin{proposition}
\label{prop: indep}
The family \(\mathcal{F}_{d}\) induces a basis of the quotient vector space
\[
(F_{d-1}^{\otimes d})^{U_{d-1}}/(F_{d-2}^{\otimes d})^{U_{d-2}}.
\]
\end{proposition}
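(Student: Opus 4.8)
Write $H := (F_{d-1}^{\otimes d})^{U_{d-1}}$ and $S := (F_{d-2}^{\otimes d})^{U_{d-2}}$. The first point is that $S$ really is a subspace of $H$: since $F_{d-2}$ is stable under $J_{d-1}$, which restricts to $J_{d-2}$ on it, the action of $U_{d-1}$ on $F_{d-2}^{\otimes d}$ factors through $U_{d-2}$, hence $S = F_{d-2}^{\otimes d}\cap H$. Recall next that $\dim H = d!$ and that $(W_{\mathbi{\alpha}})_{0\leq \alpha_{j}\leq j-1}$ is a basis of $H$ (Section~\ref{ssse: alternative}); moreover $\Sigma_{d}$ is contained in this indexing set, because its defining conditions force $\alpha_{j}\leq j-1$ for every $j$, so that $\mathcal{F}_{d}$ is indeed a subfamily of this basis. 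Finally, combining Propositions~\ref{prop: reformulation} and~\ref{prop: DCP} with Theorem~\ref{thm: PDE} and the known dimension of a DeConcini--Procesi quotient gives $\dim S = \dim \C[Z_{1},\dotsc,Z_{d}]/I_{d-2} = \tfrac{d!}{2}$ (for $d\geq 2$ one has $q=1$, $r=1$ in $d = q(d-1)+r$). By Lemma~\ref{lemma: denombrement}, $|\Sigma_{d}| = \tfrac{d!}{2} = \dim(H/S)$, so it is enough to prove that the $W_{\mathbi{\alpha}}$, $\mathbi{\alpha}\in\Sigma_{d}$, are linearly independent modulo $S$.

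To do so I would use a coefficient-extraction map. Each $W_{\mathbi{\alpha}}$ is multilinear of degree $d$, with exactly one factor coming from each block $X_{i}^{(0)},\dotsc,X_{i}^{(d-1)}$; for $P\in H$ let $c_{i}(P)$ be the coefficient of $X_{i}^{(d-1)}$ in $P$ (a multilinear polynomial in the other $d-1$ blocks) and set $\phi := (c_{1},\dotsc,c_{d})$. A multilinear degree-$d$ polynomial involves no $X_{i}^{(d-1)}$ at all precisely when it lies in $F_{d-2}^{\otimes d}$, so $\Ker\phi = F_{d-2}^{\otimes d}\cap H = S$; thus $\phi$ factors through an injection $H/S\hookrightarrow\bigoplus_{i=1}^{d}(F_{d-2}^{\otimes (d-1)})^{U_{d-2}}$, where the $i$-th summand carries the blocks indexed by $\{1,\dotsc,d\}\setminus\{i\}$. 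The crucial computation is that of $c_{i}(W_{\mathbi{\alpha}})$: in the determinant defining $W_{\mathbi{\alpha}}$, the variable $X_{i}^{(d-1)}$ occurs only in the $i$-th column, only when $\alpha_{i}=0$, and then only in the bottom row with coefficient $1$; expanding along that entry one finds that $c_{i}(W_{\mathbi{\alpha}})$ vanishes when $\alpha_{i}\neq 0$, and equals $\pm\,W'_{\overset{\wedge_{i}}{\mathbi{\alpha}}}$ when $\alpha_{i}=0$, where $\overset{\wedge_{i}}{\mathbi{\alpha}}=(\alpha_{j})_{j\neq i}$ and $W'_{(\gamma_{1},\dotsc,\gamma_{d-1})}$ is the analogous invariant in $(F_{d-2}^{\otimes (d-1)})^{U_{d-2}}$ built from the $d-1$ blocks indexed by $\{1,\dotsc,d\}\setminus\{i\}$. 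Recall that $(W'_{\mathbi{\gamma}})_{0\leq\gamma_{m}\leq m-1}$ is a basis of this smaller space.

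Now assume $\sum_{\mathbi{\alpha}\in\Sigma_{d}}\lambda_{\mathbi{\alpha}}W_{\mathbi{\alpha}}\in S$, i.e. $\sum_{\mathbi{\alpha}}\lambda_{\mathbi{\alpha}}c_{i}(W_{\mathbi{\alpha}})=0$ for all $i$. Using the partition $\Sigma_{d}=\bigsqcup_{k=1}^{d}\Sigma_{d}^{k}$ (with $\Sigma_{d}^{1}=\emptyset$), I would show $\lambda_{\mathbi{\alpha}}=0$ for $\mathbi{\alpha}\in\Sigma_{d}^{k}$ by descending induction on $k$. Assume this for all levels $>k$. In the $k$-th equation only the $\mathbi{\alpha}$ with $\alpha_{k}=0$ contribute, and all of these lie in $\bigcup_{l\geq k}\Sigma_{d}^{l}$; the induction hypothesis kills the ones with $l>k$, leaving $\sum_{\mathbi{\alpha}\in\Sigma_{d}^{k}}\pm\lambda_{\mathbi{\alpha}}W'_{\overset{\wedge_{k}}{\mathbi{\alpha}}}=0$. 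For $\mathbi{\alpha}\in\Sigma_{d}^{k}$, the remark preceding Lemma~\ref{lemma: denombrement} says that $\overset{\wedge_{k}}{\mathbi{\alpha}}$ lies in the indexing set $\{0\leq\gamma_{m}\leq m-1\}$ of the basis of $(F_{d-2}^{\otimes(d-1)})^{U_{d-2}}$; moreover $\mathbi{\alpha}\mapsto\overset{\wedge_{k}}{\mathbi{\alpha}}$ is injective on $\Sigma_{d}^{k}$ since $\alpha_{k}=0$ is forced there. Hence the $W'_{\overset{\wedge_{k}}{\mathbi{\alpha}}}$, $\mathbi{\alpha}\in\Sigma_{d}^{k}$, are distinct members of a basis, so linearly independent, and all $\lambda_{\mathbi{\alpha}}$ with $\mathbi{\alpha}\in\Sigma_{d}^{k}$ vanish. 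This closes the induction; together with the dimension equality above, it shows that $\mathcal{F}_{d}$ induces a basis of $H/S$.

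I expect the main obstacle to lie not in this argument itself but in recognizing that the naive approach fails: it is \emph{not} true that the $W_{\mathbi{\alpha}}$ with $\mathbi{\alpha}$ admissible and $\mathbi{\alpha}\notin\Sigma_{d}$ all lie in $S$ --- for instance $W_{(0,0,2,1)}\notin S$ for $d=4$ even though $(0,0,2,1)\notin\Sigma_{4}$ --- so $S$ is not a coordinate subspace for the basis $(W_{\mathbi{\alpha}})$. The filtration of $\Sigma_{d}$ by the $\Sigma_{d}^{k}$ is exactly what circumvents this: it reduces the independence statement in $H/S$ to a chain of independence statements inside the strictly smaller spaces $(F_{d-2}^{\otimes(d-1)})^{U_{d-2}}$, where a basis is already known. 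The two combinatorial inputs it relies on --- that $\overset{\wedge_{k}}{\mathbi{\alpha}}$ is admissible for $\mathbi{\alpha}\in\Sigma_{d}^{k}$ and that $\mathbi{\alpha}\mapsto\overset{\wedge_{k}}{\mathbi{\alpha}}$ is then injective --- are elementary and essentially already contained in the discussion preceding Lemma~\ref{lemma: denombrement}.
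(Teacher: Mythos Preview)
Your freeness argument is essentially identical to the paper's: both extract the coefficient of \(Y_{k}^{(d-1)}\) and use the partition \(\Sigma_{d}=\bigsqcup_{k}\Sigma_{d}^{k}\), observing that for \(\mathbi{\alpha}\in\Sigma_{d}^{\ell}\) with \(\ell<k\) one has \(\alpha_{k}\geq 1\) so \(c_{k}(W_{\mathbi{\alpha}})=0\), while for \(\mathbi{\alpha}\in\Sigma_{d}^{k}\) one gets \(\pm W'_{\overset{\wedge_{k}}{\mathbi{\alpha}}}\), distinct basis elements of the smaller invariant space. Your descending induction and the paper's ``take the maximal \(k\)'' are equivalent phrasings.

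Where you genuinely diverge is in the spanning step. The paper gives a direct, self-contained reduction algorithm: given any \(W\in H\), it kills the coefficients of \(Y_{d}^{(d-1)}, Y_{d-1}^{(d-1)},\dotsc\) successively by subtracting suitable elements of \(\mathcal{F}_{d}^{d},\mathcal{F}_{d}^{d-1},\dotsc\), so that after the full sweep \(W\) lies in \(S\). You instead compute \(\dim S=d!/2\) by invoking Propositions~\ref{prop: reformulation}--\ref{prop: DCP}, Theorem~\ref{thm: PDE}, and the DeConcini--Procesi dimension formula from \cite{BG}, and then conclude by the equality \(|\Sigma_{d}|=d!/2\) of Lemma~\ref{lemma: denombrement}. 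This is correct and shorter, but it imports the \cite{BG} machinery into a statement the paper proves from first principles. The paper's explicit reduction has the advantage of being self-contained and of providing a template that is reused verbatim in the general case (Proposition~\ref{prop: indep general}), where an analogous dimension count is not set up; your route would require a separate argument there. One minor point: you assert that \(c_{i}(P)\) lands in \((F_{d-2}^{\otimes(d-1)})^{U_{d-2}}\); the paper asserts the same, but your argument does not actually need this, since the \(W'_{\overset{\wedge_{k}}{\mathbi{\alpha}}}\) are already linearly independent as multilinear polynomials.
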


\begin{proof}
Prove first that it is a free family.
Suppose by contradiction that one has the following non-trivial dependence relation modulo \((F_{d-2}^{\otimes d})^{U_{d-2}}\)
\begin{equation}
\label{eq: lin dep}
\sum\limits_{\mathbi{\alpha} \in \Sigma_{d}}
\lambda_{\mathbi{\alpha}} W_{\mathbi{\alpha}}
\equiv
0,
\end{equation}
where the coefficients \(\lambda_{\mathbi{\alpha}} \in \C\) are not all zero. Take the greatest \(1 \leq k \leq d\) such that there exists \(\mathbi{\alpha} \in \Sigma_{d}^{k}\) with \(\lambda_{\mathbi{\alpha}} \neq 0\). The key observations are now the following: 
\begin{itemize}
\item{} 
For any \(\mathbi{\alpha} \in \Sigma_{d}^{\ell}\), with \(\ell < k\), one has that \(W_{\mathbi{\alpha}}\) does not depend on the variable \(Y_{k}^{(d-1)}\).
Indeed, going back to the very matrix whose determinant gives \(W_{\mathbi{\alpha}}\), one immediately sees that the variable \(Y_{k}^{(d-1)}\) does not appear in the coefficients, simply because \(\alpha_{k}\) is greater or equal than \(1\).
\item{}
For any \(\mathbi{\alpha} \in \Sigma_{d}^{k}\), one has that the coefficient in front of \(Y_{k}^{(d-1)}\) equals (up to a sign) to:
\[
\coeff_{Y_{k}^{(d-1)}}(W_{\mathbi{\alpha}})
=
\pm
W_{\overset{\wedge_{k}}{\mathbi{\alpha}}}.
\]
Indeed, it suffices to develop the determinant defining \(W_{\mathbi{\alpha}}\) according to the \(k\)th column to deduce this equality.
In particular, one recognizes (up to a sign) distinct elements in the basis of \((F_{d-2}^{\otimes (d-1)})^{U_{d-2}}\).\footnote{There is a slight abuse of notation here. With our notations, the space \((F_{d-2}^{\otimes (d-1)})^{U_{d-2}}\) is (identified as) a subset of the space of multi-linear polynomial in the variables
\[
(Y_{1}^{(\ell)})_{\ell}, \dotsc, (Y_{d-1}^{(\ell)})_{\ell}.
\]
Here, these are rather seen as multi-linear polynomials in the variables
\[
(Y_{1}^{(\ell}))_{\ell}, \dotsc, (Y_{k-1}^{(\ell)})_{\ell}, (Y_{k+1}^{(\ell)})_{\ell}, \dotsc, (Y_{d}^{(\ell)})_{\ell}.
\]}
\end{itemize}
Now, going back to the equality \eqref{eq: lin dep}, and looking at the coefficient of \(Y_{k}^{(d-1)}\), one finds that
\[
\sum\limits_{\mathbi{\alpha} \in \Sigma_{d}^{k}}
\lambda_{\mathbi{\alpha}} 
(\pm W_{\overset{\wedge_{k}}{\mathbi{\alpha}}})
=0.
\]
But this contradicts the fact that, by the second item above, the family 
\[
\big(
W_{\overset{\wedge_{k}}{\mathbi{\alpha}}}
\big)_{\mathbi{\alpha} \in \Sigma_{d}^{k}}
\]
form a free family.

Prove now that it spans \((F_{d-1}^{\otimes d})^{U_{d-1}}/(F_{d-2}^{\otimes d})^{U_{d-2}}\).
Fix \(W \in (F_{d-1}^{\otimes d})^{U_{d-1}}\), and simplify it modulo the elements in \(\mathcal{F}_{d}\) as follows.
If \(\coeff_{Y_{d}^{(d-1)}}(W)=0\), move onto the next step. Otherwise, consider
\(
\coeff_{Y_{d}^{(d-1)}}(W),
\)
and note that this defines an element in \((F_{d-2}^{\otimes (d-1)})^{U_{d-2}}\). It is therefore a linear combination of elements of the form
\[
W_{\overset{\wedge_{d}}{\mathbi{\alpha}}},
\]
where \(\overset{\wedge_{d}}{\mathbi{\alpha}}\) satisfies \(\overset{\wedge_{d}}{\alpha_{i}} \leq (i-1)\) for any \(1 \leq i \leq d-1\). Now, note that for any \(\overset{\wedge_{d}}{\mathbi{\alpha}}\) as above, the element \(W_{(\overset{\wedge_{d}}{\mathbi{\alpha}},0)}\) 
 belongs to \(\mathcal{F}_{d}^{d}\)
 and has its coefficient in front of \(Y_{d}^{(d-1)}\) equal to \(W_{\overset{\wedge_{d}}{\mathbi{\alpha}}}\). Therefore, one deduces that one can simplify \(W\) modulo \(\Vect(\mathcal{F}_{d}^{d})\) so that
 \[
\coeff_{Y_{d}^{(d-1)}}(W)
 =
 0.
 \]
 Suppose therefore that it is the case, and discuss now according to the value of \(\coeff_{Y_{d-1}^{(d-1)}}(W)\). If it is zero, move onto the next step. Otherwise, as above, the coefficient \(\coeff_{Y_{d-1}^{(d-1)}}(W)\) defines an element in \((F_{d-2}^{\otimes (d-1)})^{U_{d-2}}\)\footnote{With the same slight abuse of notation as before.}, and can thus be written as a linear combination of elements of the form
\[
W_{
\overset{\wedge_{(d-1)}}{\mathbi{\alpha}}},
\]
where \(\overset{\wedge_{(d-1)}}{\mathbi{\alpha}}\) satisfies \(\overset{\wedge_{(d-1)}}{\alpha_{i}} \leq (i-1)\) for any \(1 \leq i \leq d-1\). Now, there is an important observation to be made: as \(W\) does not depend on the variable \(Y_{d}^{(d-1)}\), for any \(\overset{\wedge_{(d-1)}}{\mathbi{\alpha}}\) appearing in the linear combination, the coefficient \(\overset{\wedge_{(d-1)}}{\alpha_{d-1}}\) has to be greater or equal than \(1\). Accordingly, for any \(\overset{\wedge_{(d-1)}}{\mathbi{\alpha}}\) as above, denoting 
\[
\mathbi{\alpha}:=(\overset{\wedge_{(d-1)}}{\alpha_{1}}, \dotsc, \overset{\wedge_{(d-1)}}{\alpha_{d-2}},0, \overset{\wedge_{(d-1)}}{\alpha_{d-1}}),
\]
the element \(W_{\mathbi{\alpha}}\)
belongs to \(\mathcal{F}_{d}^{d-1}\).
Furthermore, it does not depend on the variable \(Y_{d}^{(d-1)}\) by construction and, up to a sign, the coefficient in front of \(Y_{d-1}^{(d-1)}\) is equal to \(W_{\overset{\wedge_{(d-1)}}{\mathbi{\alpha}}}\) . As above, one can thus simplify \(W\) modulo \(\Vect(\mathcal{F}_{d}^{d-1})\) so that
\begin{itemize}
\item{} the vanishing \(\coeff_{Y_{d}^{(d-1)}}(W)=0\) remains (as elements in \(\mathcal{F}_{d}^{d-1}\) do not depend on the variable \(Y_{d}^{(d-1)}\));
\item{} one has the vanishing \(\coeff_{Y_{d-1}^{(d-1)}}(W)=0\).
\end{itemize}

Continuing in this fashion, one indeed shows that, modulo
\[
\Vect(\mathcal{F}_{d})
=
\bigoplus_{i=1}^{d}
\Vect(\mathcal{F}_{d}^{d-i}),
\]
the element \(W\) does not depend on the variables \(Y_{1}^{(d-1)}, \dotsc, Y_{d}^{(d-1)}\), i.e. belongs to \((F_{d-2}^{\otimes d})^{U_{d-2}}\). This finishes the proof.

\end{proof}

\subsubsection{The general case}
\label{ssse: gen general}
By what we recalled in the beginning of the previous Section \ref{ssse: gen model}, we can obtain a generating set of \(V_{d}^{\Diff}/(V_{d}^{\Diff})^{(d-2)}\) from \(\mathcal{F}_{d}\) through various substitutions. Let us therefore define
\[
\mathcal{F}_{d}^{\subs}
:=
\big\{
\subs_{f}(W) 
\
|
\
W \in \mathcal{F}_{d},
f: \{1, \dotsc, d\} \to \{0, \dotsc, N\}
\text{
non-decreasing}
\big\},
\]
where the map \(\subs_{f}\) is simply the substitution map defined as follows
\[
(Y_{s}^{(t)})_{1 \leq s \leq d; 0 \leq t \leq (d-1)}
\longmapsto
(X_{f(s)}^{(t)})_{1 \leq s \leq d; 0 \leq t \leq (d-1)}.
\]
\begin{remark}
Observe that there many instances of \(W \in \mathcal{F}_{d}\) and non-decreasing
\(
f: \{1, \dotsc, d\} \to \{0, \dotsc, N\}
\)
yielding, by anti-linearity of the determinant, to the identical vanishing \(\subs_{f}(W)=0\). 
\end{remark}
Note that there is a natural bijection between the set of non-decreasing functions from \(\{1, \dotsc, d\}\) to \(\{0, \dotsc, N\}\) and the set of vectors in \(\N^{N+1}\) whose coordinates sum up to \(d\). Explicitly, the bijection is given by associating to any non-decreasing \(f: \{1, \dotsc, d\} \to \{0, \dotsc, N\}\) the vector \(\mathbi{m}_{f} \in \N^{N+1}\) defined as
\[
\mathbi{m}(f)=(|f^{-1}(0)|, \dotsc, |f^{-1}(N)|),
\]
where, by convention, the cardinal of the empty set is zero. We will rather adopt this point of view in what follows, and denote by \(f(\mathbi{m})\) the non-decreasing function associated to \(\mathbi{m} \in \N^{N+1}, |\mathbi{m}|=d\).

The goal is thus to extract a suitable free family of \(\mathcal{F}_{d}^{\subs}\), and show that it induces a basis of \(V_{d}^{\Diff}/(V_{d}^{\Diff})^{(d-2)}\). 
To this end, one proceeds as in the previous Section \ref{ssse: gen model}: the ideas are the same, it is just more tedious notation-wise.  
Consider the set \(\overline{\Sigma_{d}}\) made of sequences of uples
\(
\mathbi{\alpha}=(\mathbi{\alpha_{0}}, \dotsc, \mathbi{\alpha_{N}})
\)
such that
\begin{enumerate}
\item{} for any \(0 \leq i \leq N\),
\begin{itemize}
\item{} either \(\mathbi{\alpha_{i}}\) is empty;
\item{} either \(\mathbi{\alpha_{i}}\) is a sequence of natural numbers, i.e. 
\[
\mathbi{\alpha_{i}}
=
(\alpha_{i,1}, \dotsc, \alpha_{i,r_{i}}),
\]
with \(r_{i} \geq 1\);
\end{itemize}

\item{}
for any \(i\) such that \(\mathbi{\alpha_{i}}\) is not the empty set, one has the string of inequalities:
\[
0 \leq \alpha_{i,1} < \dotsb <  \alpha_{i,r_{i}} < r_{0}+ \dotsb + r_{i};
\]

\item{} one has the equality:
\[
r_{0} + \dotsc + r_{N}
=
d;
\]

\item{}
there exists \(0 \leq i \leq N\) such that
\begin{itemize}
\item{} \(\alpha_{i,1}=0\);

\item{} if \(r_{i}>1\), one has the strict inequality
\[
\alpha_{i,r_{i}} < r_{0}+\dotsb+r_{i}-1;
\]
\item{} for any \(j > i\), one has the strict inequality
\[
\alpha_{j,r_{j}} < r_{0}+\dotsb+r_{j}-1,
\]
with the convention that this holds if \(\mathbi{\alpha_{j}}\) is empty.
\end{itemize}

\end{enumerate}
To any such \(\mathbi{\alpha} \in \Sigma_{d}\), associate an element \(W_{\mathbi{\alpha}}^{\subs} \in \mathcal{F}_{d}^{\subs}\) as follows. Set
\[
\mathbi{m}(\mathbi{\alpha})
:=
(|\mathbi{\alpha}_{0}|, \dotsc, |\mathbi{\alpha}_{N}|) 
=
(r_{0}, \dotsc, r_{N}) 
\in \N^{N+1}
\]
(where, by convention, the length of the empty set is zero), and define
\[
W_{\mathbi{\alpha}}^{\subs}
:=
\subs_{f(\mathbi{m}(\mathbi{\alpha}))}(W_{\mathbi{\alpha}}).
\]
The rest of the Section \ref{ssse: gen general} is devoted to show that 
\[
\mathcal{G}_{d}
:=
\big(
W_{\mathbi{\alpha}}^{\subs}
\big)_{\mathbi{\alpha} \in \overline{\Sigma_{d}}}
\]
is the sought family.
\begin{remark}
The first three items \((1), (2)\) and \((3)\) are the conditions to ensure that \(W_{\mathbi{\alpha}}^{\subs}\) lies in what is called the \textsl{canonical basis} of \(V_{d}^{\Diff}\) in \cite{ESK}[Section 2.1](see Proposition 2.1.2 in \textsl{loc.cit}).
The last item \((4)\) is the analogue of the condition defining \(\Sigma_{d}\) in the model case (see Section \ref{ssse: gen model}).
\end{remark}

As in Section \ref{ssse: gen model}, partition the set \(\overline{\Sigma_{d}}\) by considering the last index \(0 \leq \ell \leq N\) such that \(\alpha_{\ell,1}=0\), i.e. by considering
\[
\overline{\Sigma_{d}^{\ell}}
:=
\{
\mathbi{\alpha} \in \overline{\Sigma_{d}}
\
|
\
\alpha_{\ell,1}=0
\
\&
\
\forall j > \ell, \alpha_{j,1} \geq 1
\},
\]
with the convention that the empty condition (i.e. for the indexes \(j > \ell\) such that \(\mathbi{\alpha}_{j}=\emptyset\)) always holds.
\begin{remark}
Note that, for \(\mathbi{\alpha} \in \overline{\Sigma_{d}^{\ell}}\), the index \(\ell\) is also the last index such that the condition \((4)\) holds.
\end{remark}

The following lemma counts the number of elements in \(\overline{\Sigma_{d}}\):
\begin{lemma}~

\label{lemma: denombre}
For \(d=1\),  the set \(\overline{\Sigma_{d}}\) has cardinal \(|\overline{\Sigma_{1}}|=N+1\).

For \(d \geq 2\), the set \(\overline{\Sigma_{d}}\) has cardinal \(|\overline{\Sigma_{d}}|=\frac{N(N+1)}{2} (N+1)^{d-2}\).
\end{lemma}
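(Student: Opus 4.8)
The plan is to handle $d=1$ by inspection and, for $d\geq 2$, to evaluate each piece of the partition $\overline{\Sigma_{d}}=\bigsqcup_{\ell=0}^{N}\overline{\Sigma_{d}^{\ell}}$, establishing that
\[
\big|\overline{\Sigma_{d}^{\ell}}\big|=\ell\,(N+1)^{d-2}\qquad(0\leq \ell\leq N),
\]
and then summing over $\ell$ with $\sum_{\ell=0}^{N}\ell=\tfrac{N(N+1)}{2}$. For $d=1$ exactly one $\mathbi{\alpha_{i}}$ is non-empty, forced to be $(0)$, and condition $(4)$ holds at that index, so $\big|\overline{\Sigma_{1}}\big|=N+1$; assume henceforth $d\geq 2$.

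The first step is the reduction behind the Remark after the definition of $\overline{\Sigma_{d}^{\ell}}$: for $\mathbi{\alpha}\in\overline{\Sigma_{d}^{\ell}}$ the existential condition $(4)$ holds in particular at the index $\ell$ itself. Indeed, if $(4)$ holds at some $i_{0}$ then $\alpha_{i_{0},1}=0$, hence $i_{0}\leq \ell$, and the inequalities imposed by $(4)$ at $i_{0}$ on the indices $j\geq \ell$ already include those required by $(4)$ at $\ell$; combined with $\alpha_{\ell,1}=0$ this yields $(4)$ at $\ell$. Writing $s_{i}:=r_{0}+\dotsb+r_{i}$, it follows that $\overline{\Sigma_{d}^{\ell}}$ is precisely the set of configurations satisfying $(1)$, $(2)$, $(3)$ and the \emph{block-local} conditions
\[
\alpha_{\ell,1}=0,\qquad \alpha_{j,1}\geq 1\ (j>\ell),\qquad \alpha_{\ell,r_{\ell}}\leq s_{\ell}-2\ \text{ if }r_{\ell}>1,\qquad \alpha_{j,r_{j}}\leq s_{j}-2\ (j>\ell),
\]
the last three being vacuous for an empty $\mathbi{\alpha_{j}}$.

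Each of these conditions involves a given $\mathbi{\alpha_{i}}$ only through the prefix sum $s_{i}$, so the count factors over the three blocks $\{i<\ell\}$, $\{i=\ell\}$, $\{i>\ell\}$. Put $a:=s_{\ell-1}$, $b:=r_{\ell}\geq 1$ and $c:=r_{\ell+1}+\dotsb+r_{N}$, so $a+b+c=d$. For $\{i<\ell\}$ there is no further constraint: for a fixed composition of $a$ into $\ell$ parts the number of choices of the $\mathbi{\alpha_{i}}$ is $\prod_{i<\ell}\binom{s_{i}}{r_{i}}=\tfrac{a!}{r_{0}!\dotsb r_{\ell-1}!}$ (telescoping), summing to $\ell^{a}$ by the multinomial theorem --- the same bookkeeping that yields $\ell^{a}$ canonical basis elements in $a$ copies and $\ell$ colours, as in Section~\ref{ssse: alternative} and \cite{ESK}. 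For $\{i=\ell\}$ the tuple $\mathbi{\alpha_{\ell}}$ is a $b$-subset of $\{0,\dotsc,s_{\ell}-1\}$ containing $0$ and (for $b>1$) missing $s_{\ell}-1$, giving $\binom{s_{\ell}-2}{b-1}=\binom{a+b-2}{b-1}$ choices. For $\{i>\ell\}$ each non-empty $\mathbi{\alpha_{j}}$ is an $r_{j}$-subset of $\{1,\dotsc,s_{j}-2\}$, so for a fixed composition of $c$ into $N-\ell$ parts the count is $\prod_{j>\ell}\binom{s_{j}-2}{r_{j}}$; putting $n_{j}:=s_{j}-2$ and using $\binom{n_{j}}{r_{j}}=\binom{n_{j}}{n_{j-1}}$ (as $n_{j}-r_{j}=n_{j-1}$), this telescopes to $\binom{d-2}{c}\binom{c}{r_{\ell+1},\dotsc,r_{N}}$, which sums over compositions of $c$ to $\binom{d-2}{c}(N-\ell)^{c}$; the only truly degenerate case, $a=0$ and $b=1$ (hence $c=d-1$), contributes $0$ because $\binom{d-2}{d-1}=0$. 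Multiplying the three blocks gives
\[
\big|\overline{\Sigma_{d}^{\ell}}\big|=\sum_{\substack{a+b+c=d\\ b\geq 1}}\ell^{a}\,\binom{a+b-2}{b-1}\,\binom{d-2}{c}\,(N-\ell)^{c}.
\]

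The final step is to evaluate this double sum, which is routine. Fix $c$ and write $b=d-c-a$; since $\binom{a+b-2}{b-1}=\binom{d-c-2}{a-1}$, the inner sum over $a$ equals $\sum_{a}\ell^{a}\binom{d-c-2}{a-1}=\ell(1+\ell)^{d-c-2}$ after shifting $a\mapsto a+1$ and applying the binomial theorem. Then $\sum_{c}\binom{d-2}{c}(N-\ell)^{c}\,\ell(1+\ell)^{d-c-2}=\ell\big((N-\ell)+(1+\ell)\big)^{d-2}=\ell(N+1)^{d-2}$ (the $c=d-1$ terms drop out), and summing over $\ell$ gives $\big|\overline{\Sigma_{d}}\big|=(N+1)^{d-2}\sum_{\ell=0}^{N}\ell=\tfrac{N(N+1)}{2}(N+1)^{d-2}$. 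The only point I expect to require care is the telescoping identity for the block $\{i>\ell\}$ together with the consistent treatment of the degenerate binomial coefficients; everything else parallels the proof of Lemma~\ref{lemma: denombrement}.
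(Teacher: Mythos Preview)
Your proof is correct and follows the same overall strategy as the paper: partition $\overline{\Sigma_{d}}$ into the strata $\overline{\Sigma_{d}^{\ell}}$ and count each stratum by running over the composition $(r_{0},\dotsc,r_{N})$. The organisation of the count is mildly different. The paper keeps the full composition $\mathbi{m}=(r_{0},\dotsc,r_{N})$ fixed, writes the count as the product
\[
\prod_{j<\ell}\binom{|\mathbi{m}|_{j}}{m_{j}}\cdot\binom{|\mathbi{m}|_{\ell}-2}{m_{\ell}-1}\cdot\prod_{j>\ell}\binom{|\mathbi{m}|_{j}-2}{m_{j}},
\]
telescopes this to $|\mathbi{m}|_{\ell-1}\,\frac{(d-2)!}{m_{0}!\dotsb(m_{\ell}-1)!\dotsb m_{N}!}$, rewrites it as $\sum_{j<\ell}\binom{d-2}{\mathbi{m}-\mathbi{e}_{j}-\mathbi{e}_{\ell}}$, and only then sums over $\mathbi{m}$ and $\ell$ via the multinomial theorem. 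You instead split the indices into the three blocks $\{i<\ell\}$, $\{i=\ell\}$, $\{i>\ell\}$ and sum over each block's partial composition immediately, which yields the clean intermediate formula $|\overline{\Sigma_{d}^{\ell}}|=\ell\,(N+1)^{d-2}$ before the final sum over $\ell$. The paper arrives at the same stratum count, but only implicitly at the very end. Your route is a touch more streamlined and makes the $\ell$-dependence transparent; the paper's version keeps the connection to multinomial coefficients more explicit. The underlying combinatorics (including the handling of the degenerate case $a=0$, $b=1$ via $\binom{d-2}{d-1}=0$) is the same.
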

\begin{proof}
The case where \(d=1\) is clear, and corresponds to the fact that \(\mathcal{F}^{\subs}_{1}=\{X_{0}, \dotsc, X_{N}\}\).
Suppose that \(d \geq 2\), and consider the partition
\[
\overline{\Sigma_{d}}
=
\bigsqcup_{\ell=0}^{N}
\overline{\Sigma_{d}^{\ell}}.
\]
For any \(0 \leq \ell \leq N\), one has the equality
\begin{equation}
\label{eq: dénombre}
|\overline{\Sigma_{d}^{\ell}}|
=
\sum\limits_{\substack{
\mathbi{m} \in \N^{N+1}
\\
|\mathbi{m}|=d
}}
\binom{|\mathbi{m}|_{0}}{m_{0}}
\times
\dotsb
\times 
\binom{|\mathbi{m}|_{\ell-1}}{m_{\ell-1}}
\times
\binom{|\mathbi{m}|_{\ell}-2}{m_{\ell}-1}
\times 
\binom{|\mathbi{m}|_{\ell+1}-2}{m_{\ell+1}}
\times 
\dotsb
\times
\binom{d-2}{m_{N}},
\end{equation}
where one sets \(|\mathbi{m}|_{j}:=m_{0}+\dotsb+m_{j}\) the sum of the \((j+1)\) first coordinates of \(\mathbi{m}\). 
Indeed, fixing \(\mathbi{m} \in \N^{N+1}\) such that \(|\mathbi{m}|=d\), one counts the number of elements \(\mathbi{\alpha} \in \overline{\Sigma_{d}^{\ell}}\) satisfying
\(
\mathbi{m}(\mathbi{\alpha})
=
\mathbi{m}
\):
\begin{itemize}
\item{}
for each \(1 \leq j \leq \ell-1\), 
\begin{itemize}
\item{} 
either \(m_{j}=0\), in which case \(\mathbi{\alpha_{j}}\) must be the empty set: there is \(1=\binom{|\mathbi{m}|_{j}}{0}\) choice;

\item{}
either \(m_{j}>0\), in which case \(\mathbi{\alpha_{j}}\) must be an increasing sequence satisfying
\[
0 \leq \alpha_{j,1} < \dotsb < \alpha_{j,m_{j}} < |\mathbi{m}|_{j},
\]
and there are \(\binom{|\mathbi{m}|_{j}}{m_{j}}\) such choices;
\end{itemize}

\item{}
for \(j=\ell\), the value \(\alpha_{\ell,1}\) is prescribed to be zero, and 
\begin{itemize}
\item{} either \(m_{\ell}=1\), so that there is only \(1=\binom{|\mathbi{m}|_{\ell}-2}{0}\) choice (with the convention that for any \(n \in \Z\), one sets \(\binom{n}{0}=1\));

\item{} either \(m_{\ell}\geq 2\), in which case \(\mathbi{\alpha_{\ell}}\) must be an increasing sequence satisfying
\[
1 \leq \alpha_{\ell,2} < \dotsb < \alpha_{\ell,m_{\ell}} < |\mathbi{m}|_{\ell}-1,
\]
and there are \(\binom{|\mathbi{m}|_{\ell}-2}{m_{\ell}-1}\) such choices;
\end{itemize}

\item{}
for \(j>\ell\),
\begin{itemize}
\item{} 
either \(m_{j}=0\), in which case \(\mathbi{\alpha_{j}}\) must be the empty set, so that there is \(1=\binom{|\mathbi{m}|_{j}}{0}\) choice;

\item{} 
either \(m_{j}>0\), in which case \(\mathbi{\alpha_{j}}\) must be an increasing sequence satisfying
\[
1 \leq \alpha_{j,1} < \dotsb < \alpha_{j,m_{j}} < |\mathbi{m}|_{j}-1,
\]
and there are \(\binom{|\mathbi{m}|_{j}-2}{m_{j}}\) such choices.
\end{itemize}
\end{itemize}
Summing over the uples \(\mathbi{m} \in \N^{N+1}\) such that \(|\mathbi{m}|=d\), one finds the equality \eqref{eq: dénombre}.

Now, a straightforward computation shows that each element appearing in the sum \eqref{eq: dénombre} writes
\begin{align*}
&
|\mathbi{m}|_{\ell-1}\times \frac{(d-2)!}{m_{0}!\times \dotsb \times m_{\ell-1}!\times (m_{\ell}-1)! \times m_{\ell+1}! \times \dotsb \times m_{N}!}
\\
& = 
\sum\limits_{j=0}^{\ell-1}
\binom{d-2}{m_{0}, \dotsc, m_{j-1}, m_{j}-1, m_{j+1}, \dotsc, m_{\ell-1}, m_{\ell}-1, m_{\ell+1}, \dotsc, m_{N}}
\\
& =
\sum\limits_{j=0}^{\ell-1}
\binom{d-2}{\mathbi{m}-(\mathbi{e}_{j}+\mathbi{e}_{\ell})},
\end{align*}
where one has used the usual notation for multinomial coefficients, and where \((\mathbi{e}_{i})_{0 \leq i \leq N}\) is the canonical basis of \(\Z^{N+1}\).
Compute then that:
\begin{align*}
|\overline{\Sigma_{d}}|
=
\sum\limits_{\ell=0}^{N} 
|\overline{\Sigma_{d}^{\ell}}|
&
=
\sum\limits_{\ell=0}^{N} 
\sum\limits_{\substack{
\mathbi{m} \in \N^{N+1}
\\
|\mathbi{m}|=d
}}
\sum\limits_{j=0}^{\ell-1}
\binom{d-2}{\mathbi{m}-(\mathbi{e}_{j}+\mathbi{e}_{\ell})}
\\
&=
\sum\limits_{0 \leq j<\ell \leq N}
\sum\limits_{\substack{
\mathbi{m} \in \N^{N+1}
\\
|\mathbi{m}|=d}}
\binom{d-2}{\mathbi{m}-(\mathbi{e}_{j}+\mathbi{e}_{\ell})}.
\end{align*}
Finally, observe that for any \(0 \leq j < \ell \leq N\), one has the equality
\begin{align*}
\sum\limits_{\substack{
\mathbi{m} \in \N^{N+1}
\\
|\mathbi{m}|=d}}
\binom{d-2}{\mathbi{m}-(\mathbi{e}_{j}+\mathbi{e}_{\ell})}
=
\sum\limits_{\substack{
\mathbi{m} \in \N^{N+1}
\\
|\mathbi{m}|=d-2}}
\binom{d-2}{\mathbi{m}}
=
(N+1)^{d-2},
\end{align*}
where the last equality follows from Newton multinomial formula. Combined with the equality
\[
\sum\limits_{0 \leq j < \ell \leq N} 1=\binom{N+1}{2}=\frac{N(N+1)}{2},
\]
one obtains the sought formula for \(|\overline{\Sigma_{d}}|\), which finishes the proof of the lemma.
\end{proof}

Similarly to Proposition \ref{prop: indep}, we now have:
\begin{proposition}
\label{prop: indep general}
The family \(\mathcal{G}_{d}\) induces a basis of \((V_{d}^{\Diff})/(V_{d}^{\Diff})^{(d-2)}\).
\end{proposition}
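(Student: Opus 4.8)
The plan is to transcribe \emph{verbatim} the proof of Proposition~\ref{prop: indep}, replacing the multilinear variables $Y_{s}^{(\bigcdot)}$ there by the variables $X_{i}^{(\bigcdot)}$ (after substitution) and letting the combinatorics of $\overline{\Sigma_{d}}$ play the role of that of $\Sigma_{d}$. The case $d=1$ is trivial (the quotient is $V_{1}^{\Diff}$ itself, and $\mathcal{G}_{1}=\{X_{0},\dotsc,X_{N}\}$ is a basis of it), so from now on $d\geq 2$. Recall from \cite{ESK}[Proposition 2.1.2] that the elements $W^{\subs}_{\mathbi{\beta}}$, for $\mathbi{\beta}$ ranging over the sequences of uples satisfying $(1)$, $(2)$, $(3)$, form the \emph{canonical basis} of $V_{d}^{\Diff}$. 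Since $\overline{\Sigma_{d}}$ is exactly the subset of indices that satisfy $(1)$, $(2)$, $(3)$ \emph{and} $(4)$, the family $\mathcal{G}_{d}$ is a sub-family of this basis, hence automatically free in $V_{d}^{\Diff}$; it then suffices to prove
\[
\Vect(\mathcal{G}_{d})\cap(V_{d}^{\Diff})^{(d-2)}=0
\qquad\text{and}\qquad
\Vect(\mathcal{G}_{d})+(V_{d}^{\Diff})^{(d-2)}=V_{d}^{\Diff}.
\]

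The single computational ingredient — the analogue of the two bulleted observations in the proof of Proposition~\ref{prop: indep} — is the following \emph{dictionary}, obtained by expanding the determinant $W_{\mathbi{\beta}}$ along the column $J^{0}v_{i}=(X_{i}^{(0)},\dotsc,X_{i}^{(d-1)})^{\intercal}$: for a canonical index $\mathbi{\beta}$ and $0\leq i\leq N$, the variable $X_{i}^{(d-1)}$ occurs in $W^{\subs}_{\mathbi{\beta}}$ if and only if $\mathbi{\beta}$ satisfies condition $(4)$ relative to the index $i$, in which case
\[
\coeff_{X_{i}^{(d-1)}}\big(W^{\subs}_{\mathbi{\beta}}\big)=\pm\,W^{\subs}_{\mathbi{\beta}'},
\]
where $\mathbi{\beta}'$ is the degree-$(d-1)$ index obtained from $\mathbi{\beta}$ by deleting the entry $0$ from its $i$-th uple and lowering the truncation order by one, and where $\mathbi{\beta}'$ is again a \emph{canonical} index for $V_{d-1}^{\Diff}$; moreover $\mathbi{\beta}\mapsto\mathbi{\beta}'$ is a bijection onto its image, the inverse re-inserting the $0$. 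The role of $(4)(ii)$ and $(4)(iii)$ is precisely to force $\mathbi{\beta}'$ to remain canonical: a column whose exponent would reach $d-1$ is supported on the last row of the matrix and makes the complementary minor vanish.

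Granting this, I would argue as in Proposition~\ref{prop: indep}. For freeness modulo $(V_{d}^{\Diff})^{(d-2)}$: from a relation $\sum_{\mathbi{\alpha}\in\overline{\Sigma_{d}}}\lambda_{\mathbi{\alpha}}W^{\subs}_{\mathbi{\alpha}}\in(V_{d}^{\Diff})^{(d-2)}$, pick the largest $k$ for which some $\mathbi{\alpha}\in\overline{\Sigma_{d}^{k}}$ has $\lambda_{\mathbi{\alpha}}\neq 0$, and extract $\coeff_{X_{k}^{(d-1)}}$: the terms indexed by $\overline{\Sigma_{d}^{\ell}}$ with $\ell<k$ drop out (there $\alpha_{k,1}\geq 1$, or the $k$-th uple is empty), the terms indexed by $\overline{\Sigma_{d}^{\ell}}$ with $\ell>k$ have zero coefficient, the right-hand side contributes nothing, and one is left with $\sum_{\mathbi{\alpha}\in\overline{\Sigma_{d}^{k}}}\lambda_{\mathbi{\alpha}}(\pm W^{\subs}_{\mathbi{\alpha}'})=0$, a non-trivial relation among distinct canonical basis vectors of $V_{d-1}^{\Diff}$ — absurd. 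For the spanning relation: given $W\in V_{d}^{\Diff}$, kill successively $\coeff_{X_{N}^{(d-1)}}(W),\dotsc,\coeff_{X_{0}^{(d-1)}}(W)$. When it is the turn of index $i$ (so $\coeff_{X_{j}^{(d-1)}}(W)=0$ already for $j>i$), a short downward induction based on the dictionary shows that, in the canonical expansion $W=\sum c_{\mathbi{\beta}}W^{\subs}_{\mathbi{\beta}}$, one has $c_{\mathbi{\beta}}=0$ for every $\mathbi{\beta}$ satisfying $(4)$ at some index $>i$; hence $\coeff_{X_{i}^{(d-1)}}(W)=\sum_{\mathbi{\beta}\in\overline{\Sigma_{d}^{i}}}c_{\mathbi{\beta}}(\pm W^{\subs}_{\mathbi{\beta}'})$, and subtracting the matching combination of elements $W^{\subs}_{\mathbi{\beta}}\in\mathcal{G}_{d}$ removes $\coeff_{X_{i}^{(d-1)}}$ without disturbing the vanishings already obtained (those $W^{\subs}_{\mathbi{\beta}}$, $\mathbi{\beta}\in\overline{\Sigma_{d}^{i}}$, do not involve $X_{j}^{(d-1)}$ for $j>i$). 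After the last step $W$ involves no order-$(d-1)$ variable, i.e.\ $W\in(V_{d}^{\Diff})^{(d-2)}$. Both relations follow, so $\mathcal{G}_{d}$ induces a basis of $(V_{d}^{\Diff})/(V_{d}^{\Diff})^{(d-2)}$, of dimension $|\overline{\Sigma_{d}}|=\tfrac{N(N+1)}{2}(N+1)^{d-2}$ by Lemma~\ref{lemma: denombre}.

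The hard part — the only one requiring real care — is the dictionary above: one must verify, purely from the determinantal definition of $W_{\mathbi{\beta}}$, that deleting a $0$ from the $i$-th uple and lowering the truncation order sends an index of $\overline{\Sigma_{d}^{i}}$ to a canonical index of $V_{d-1}^{\Diff}$ (this is exactly where $(4)(ii)$ and $(4)(iii)$ enter, together with the vanishing of the complementary minors when these fail), and that conversely no canonical index outside $\overline{\Sigma_{d}^{i}}$ produces $X_{i}^{(d-1)}$ among its monomials. Once this combinatorial bookkeeping is set up, the two inductive simplifications above are the literal transcription of the model case of Section~\ref{ssse: gen model}.
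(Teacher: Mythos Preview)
Your overall plan coincides with the paper's (which merely says ``same as Proposition~\ref{prop: indep}, just more tedious notation-wise''), and your freeness argument is correct: for \(\mathbi{\alpha}\in\overline{\Sigma_{d}^{\ell}}\) with \(\ell<k\) one indeed has \(\alpha_{k,1}\geq 1\), so \(X_{k}^{(d-1)}\) is absent, while for \(\ell=k\) condition~(4) at \(k\) makes the cofactor a \emph{canonical} element of \(V_{d-1}^{\Diff}\), and the map \(\mathbi{\alpha}\mapsto\mathbi{\alpha}'\) is injective on \(\overline{\Sigma_{d}^{k}}\).

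There is, however, a genuine gap in your dictionary: the ``only if'' direction is false. Take \(d=3\), \(N=1\), \(\mathbi{\beta}=((0,1),(0))\). This index is canonical, and condition~(4) fails at \(i=0\) (since \(r_{0}=2\) and \(\alpha_{0,2}=1\not< r_{0}-1=1\)). Yet
\[
W^{\subs}_{\mathbi{\beta}}=\det\begin{pmatrix} X_{0} & 0 & X_{1}\\ X_{0}' & X_{0} & X_{1}'\\ X_{0}'' & 2X_{0}' & X_{1}''\end{pmatrix}
= X_{0}^{2}X_{1}''-2X_{0}X_{0}'X_{1}'+2(X_{0}')^{2}X_{1}-X_{0}X_{1}\,X_{0}'',
\]
so \(\coeff_{X_{0}''}(W^{\subs}_{\mathbi{\beta}})=-X_{0}X_{1}\neq 0\). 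Your stated reason (``a column whose exponent would reach \(d-1\) is supported on the last row and makes the complementary minor vanish'') only handles the failure of~(4) coming from some \(\alpha_{j,s}=d-1\); here no exponent equals \(2\), and the complementary minor is the \emph{non-canonical} (but nonzero) \(W^{\subs}_{((1),(0))}\). This breaks your spanning argument exactly where you invoke the dictionary: both the ``downward induction'' (you need the contributors to \(\coeff_{X_{j}^{(d-1)}}\) to be precisely the (4)-at-\(j\) indices, with linearly independent canonical cofactors, in order to force the corresponding \(c_{\mathbi{\beta}}=0\)) and the subsequent ``hence \(\coeff_{X_{i}^{(d-1)}}(W)=\sum_{\overline{\Sigma_{d}^{i}}}\dotsb\)'' rely on it.

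The fix is to imitate the model case more literally and \emph{not} pass through the canonical expansion of \(W\). First check (exactly as in the model case) that \(A:=\coeff_{X_{i}^{(d-1)}}(W)\) is itself an element of \(V_{d-1}^{\Diff}\); then expand \(A\) in the canonical basis of \(V_{d-1}^{\Diff}\). The ``important observation'' of the model case --- that the already-achieved vanishings \(\coeff_{X_{j}^{(d-1)}}(W)=0\) for \(j>i\) constrain which canonical indices \(\mathbi{\gamma}\) of degree \(d-1\) can appear in \(A\) --- is what guarantees that each such \(\mathbi{\gamma}\) lifts (by inserting a \(0\) in its \(i\)-th uple) to an element of \(\overline{\Sigma_{d}^{i}}\), so that the corresponding combination in \(\mathcal{G}_{d}^{i}\) kills \(\coeff_{X_{i}^{(d-1)}}\) without disturbing the earlier vanishings.
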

\begin{proof}
The proof is similar to the one carried over in Proposition \ref{prop: indep}, just more tedious notation-wise.
One first shows that the family \(\mathcal{G}_{d}\) induces a free family in the quotient vector space \((V_{d}^{\Diff})/(V_{d}^{\Diff})^{(d-2)}\), using the triangular-shaped system induced by the decomposition
\[
\mathcal{G}_{d}
=
\bigsqcup_{\ell=0}^{N} \mathcal{G}_{d}^{\ell},
\]
where 
\(
 \mathcal{G}_{d}^{\ell}
:=
\{
W_{\mathbi{\alpha}}^{\subs}
\
|
\
\mathbi{\alpha} \in \overline{\Sigma_{d}^{\ell}}
\}
\). 
Then, one proves that (the images of) \(\mathcal{G}_{d}\) span \(V_{d}^{\Diff}/(V_{d}^{\Diff})^{(d-2)}\), following the same line of reasoning as in Proposition \ref{prop: indep}.
Complete details are omitted, as they are similar to the proof of Proposition \ref{prop: indep}.
\end{proof}

\subsection{Final form of the Main Theorem}~
To sum up this whole Section \ref{se: fg}, we have thus proved the following:
\begin{theorem}
\label{thm: final}
For any \(k \in \N\), the algebra of differentially homogeneous polynomials in \((N+1)\) variables \((V^{\Diff})^{(k)}\) is finitely generated by 
\begin{itemize}
\item{} \((N+1)\) generators \(\mathcal{G}_{1}\) in degree \(1\) (and order \(0\));
\item{} \(\frac{N(N+1)}{2}\) generators \(\mathcal{G}_{2}\) in degree \(2\) (and order \(1\));
\item{} \(\frac{N(N+1)}{2}\times N\) generators \(\mathcal{G}_{3}\) in degree \(3\) (and order \(2\));
\item{} \(\cdot\)
\item{} \(\cdot\)
\item{} \(\frac{N(N+1)}{2}\times N^{k-1}\) generators \(\mathcal{G}_{k+1}\) in degree \((k+1)\) (and order \(k\)).
\end{itemize}
Furthermore, this is a minimal set of generators.
\end{theorem}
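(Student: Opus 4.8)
The plan is to obtain Theorem~\ref{thm: final} by assembling the results already proved: the finite generation statement of Theorem~\ref{thm: finite generation}, the fact that $\mathcal{G}_{d}$ descends to a basis of $(V_{d}^{\Diff})^{(d-1)}/(V_{d}^{\Diff})^{(d-2)}$ (Proposition~\ref{prop: indep general}), and the count of Lemma~\ref{lemma: denombre}; the only genuinely new ingredient is the minimality of the resulting generating set. First I would feed the choice $\mathcal{B}_{i}:=\mathcal{G}_{i}$, $1\leq i\leq k+1$, into Theorem~\ref{thm: finite generation} --- legitimate precisely because Proposition~\ref{prop: indep general}, together with the order saturation $(V_{i}^{\Diff})^{(k)}=(V_{i}^{\Diff})^{(i-1)}$ of Section~\ref{ssse: alternative}, says that each $\mathcal{G}_{i}$ induces a basis of the relevant graded quotient --- to get
\[
(V^{\Diff})^{(k)}=\C[\mathcal{G}_{1},\mathcal{G}_{2},\dotsc,\mathcal{G}_{k+1}].
\]
The cardinalities $|\mathcal{G}_{1}|=N+1$ and $|\mathcal{G}_{d}|=|\overline{\Sigma_{d}}|$ for $2\leq d\leq k+1$ are then read off Lemma~\ref{lemma: denombre}, and the degree/order bookkeeping is immediate: $W^{\subs}_{\mathbi{\alpha}}$ is homogeneous of degree $d$ by construction, and since its class in $(V_{d}^{\Diff})^{(d-1)}/(V_{d}^{\Diff})^{(d-2)}$ is non-zero (Proposition~\ref{prop: indep general}), its order of derivation is exactly $d-1$.

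For minimality I would pass to the space of indecomposables. Put $A:=(V^{\Diff})^{(k)}$, a connected graded algebra, with irrelevant ideal $A_{+}:=\bigoplus_{d\geq 1}A_{d}$; for such an algebra a homogeneous generating set always maps onto a spanning set of $A_{+}/A_{+}^{2}$, the minimal number of degree-$d$ generators equals $\dim(A_{+}/A_{+}^{2})_{d}$, and a homogeneous generating set is minimal as soon as it has exactly that many elements in each degree. Since the $W^{\subs}_{\mathbi{\alpha}}$ are homogeneous, it is enough to compute $(A_{+}^{2})_{d}$, the span of the degree-$d$ products of two elements of $A_{+}$. In degree $1$ one has $(A_{+}^{2})_{1}=0$, so $(A_{+}/A_{+}^{2})_{1}=V_{1}^{\Diff}$, of dimension $N+1$. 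For $d\geq k+2$: since $A$ is generated in degrees $\leq k+1$, every element of $A_{d}$ is a sum of products of at least two positive-degree elements, i.e. $(A_{+}^{2})_{d}=A_{d}$ and $(A_{+}/A_{+}^{2})_{d}=0$. The remaining range $2\leq d\leq k+1$ is the crux: on one hand a factor of a degree-$d$ product has degree $i\leq d-1$, hence order at most $i-1\leq d-2$ by order saturation, so $(A_{+}^{2})_{d}\subseteq(V_{d}^{\Diff})^{(d-2)}$; on the other hand, applying Theorem~\ref{thm: finite generation} at the smaller order $d-2$ gives $(V_{d}^{\Diff})^{(d-2)}=\big(\C[\mathcal{G}_{1},\dotsc,\mathcal{G}_{d-1}]\big)_{d}$, which is spanned by products of the $\mathcal{G}_{i}$ with $i\leq d-1$ --- each necessarily with at least two factors since $d-1<d$ --- hence contained in $(A_{+}^{2})_{d}$. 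Therefore $(A_{+}^{2})_{d}=(V_{d}^{\Diff})^{(d-2)}$, and using $A_{d}=(V_{d}^{\Diff})^{(k)}=(V_{d}^{\Diff})^{(d-1)}$ one obtains $(A_{+}/A_{+}^{2})_{d}\simeq(V_{d}^{\Diff})^{(d-1)}/(V_{d}^{\Diff})^{(d-2)}$, of dimension $|\overline{\Sigma_{d}}|$ by Proposition~\ref{prop: indep general} and Lemma~\ref{lemma: denombre}.

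Putting the three ranges together, the generating set $\mathcal{G}_{1}\cup\dotsb\cup\mathcal{G}_{k+1}$ realizes in every degree exactly the dimension of $A_{+}/A_{+}^{2}$, so it descends to a basis of $A_{+}/A_{+}^{2}$ and is therefore a minimal generating set, which is the assertion of the theorem. The step I expect to be the main obstacle is the inclusion $(V_{d}^{\Diff})^{(d-2)}\subseteq(A_{+}^{2})_{d}$ --- the assertion that \emph{every} degree-$d$ differentially homogeneous polynomial whose order of derivation drops below $d-1$ is genuinely a polynomial in the strictly-lower-degree generators; it is precisely here that one must re-invoke the finite generation theorem at the lower order $d-2$ rather than treat Theorem~\ref{thm: finite generation} as a black box applied only at level $k$.
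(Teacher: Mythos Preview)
Your proposal is correct and follows essentially the same approach as the paper. The paper's own proof of minimality is a two-line sketch---observing that degree-$i$ generators are of minimal degree among elements truly of order $i-1$, and invoking Proposition~\ref{prop: indep general} to rule out linear relations lowering the order---which amounts to exactly your identification $(A_{+}^{2})_{d}=(V_{d}^{\Diff})^{(d-2)}$ for $2\leq d\leq k+1$; you have simply unpacked this into the standard indecomposables language $A_{+}/A_{+}^{2}$ and made explicit the step of re-invoking Theorem~\ref{thm: finite generation} at the lower order $d-2$ to get the reverse inclusion, which the paper leaves implicit.
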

\begin{proof}
Everything has been justified, except perhaps the minimality. 
To prove this, it suffices to observe the following two facts:
\begin{itemize}
\item{} the generators in degree \(i\) are minimal with respect to the degree amongst the differentially homogeneous polynomials that are truly of order \((i-1)\) (i.e. not of order strictly less than \((i-1)\));
\item{} by Proposition \ref{prop: indep general}, there does not exist any linear relation between the generators in degree \(i\) allowing to decrease their order.
\end{itemize}
\end{proof}

\section{Application: Projective compactifications of jet spaces of projective spaces}
\label{se: proj}
Let us fix \(N \geq 1\) a natural number, and \(k \geq 0\) a truncation level. 
Recall the geometric interpretation of \((V^{\Diff})^{(k)}\): this is the algebra of (generalized) functions on the \(k\)-jets of germ of holomorphic curves traced on the projective space \(\P^{N}\), i.e. on \(J_{k}\P^{N}\). The main Theorem \ref{thm: final} provides a description of this algebra:
\[
(V^{\Diff})^{(k)}
=
\C[\mathcal{G}_{1}, \dotsc, \mathcal{G}_{k+1}],
\]
where \(\mathcal{G}_{1}=\{X_{0}, \dotsc, X_{N}\}\), and where \((\mathcal{G}_{i})_{2 \leq i \leq (k+1)}\) is an explicit set of generators of degree \(i\) and order \((i-1)\).
\begin{example}~

\begin{enumerate}
\item{}
For \(k=0\), \((V^{\Diff})^{(0)}=\C[X_{0}, \dotsc, X_{N}]\). This is indeed the algebra of functions on points (i.e. \(0\)-jets) on \(\P^{N}\).

\item{}
For \(k=1\), \((V^{\Diff})^{(1)} = \C[(X_{i})_{0 \leq i \leq N}, (\Wronsk(X_{i}, X_{j}))_{0 \leq i<j \leq N}]\). This is the algebra of functions on points and tangent vectors (i.e. \(1\)-jets) on \(\P^{N}\).

\end{enumerate}
\end{example}
The reader will probably have recognized that, in the above example, the elements
\[
\big(
\Wronsk(X_{i}, X_{j}):=X_{i}X_{j}'-X_{j}X_{i}'
\big)_{0 \leq i<j \leq N}
\]
corresponds to a basis of \(H^{0}(\P^{N},\Omega_{\P^{N}}(2))\). This is no coincidence: there is natural one-to-one correspondence between differentially homogeneous polynomials and global sections of the so-called \textsl{Green--Griffiths vector bundles}. We refer to \cite{ESK}[Section 3] for more details, where the correspondence is detailed.

However, we would like to emphasize that we do \textsl{not} want to interpret it in this way: our general philosophy is that functions on points should not be put apart from functions on higher jets, as it is usually customary. That being said, there is therefore a natural projective compactification of the \(k\)th jet bundle over \(\P^{N}\):
\begin{definition-proposition}[Projective compactification of \(J_{k}\P^{N}\)]~
\label{def-prop: compactification}
For any \(k \in \N\), denote by
\[
(\P^{N})^{(k)}
:=
\Proj (V^{\Diff})^{(k)}.
\]
This is a projective variety, that compactifies in a natural fashion the \(k\)th jet bundle \(J_{k}\P^{N}\).
\end{definition-proposition}
\begin{proof}
The graded ring \((V^{\Diff})^{(k)}\) is finitely generated, in degree less or equal than \((k+1)\): the scheme \((\P^{N})^{(k)}\) is therefore a projective variety. Consider the subvariety
\[
D_{\infty}:=\{X_{0}=\dotsb X_{N}=0\},
\]
and set \(V:=(\P^{N})^{(k)}\setminus D_{\infty}\). The claim is that one has a natural isomorphism of (smooth) quasi-projective varieties:
\[
(V, \O_{V})
\simeq
(J_{k}\P^{N}, \O_{J_{k}\P^{N}}).
\]
On the one hand, recall that, from the point of view of algebraic geometry, the scheme \(J_{k}\P^{N} \overset{\pi}{\longrightarrow} \P^{N}\) is obtained as the glueing of the affine schemes 
\[
\Spec(\C[ \overset{\wedge_{i}}{(x_{0}, \dotsc, x_{N})}, \dotsc, \overset{\wedge_{i}}{(x_{0}^{(k)}, \dotsc, x_{N}^{(k)})}]).
\]
On the other hand, observe that the quasi-projective variety \(V\) is covered by the affine open subsets \(V_{i}:=\{X_{i} \neq 0\}\), and note that one has the following equality of affine schemes:
\[
V_{i}
=
\Spec(\C[ \overset{\wedge_{i}}{(x_{0}, \dotsc, x_{N})}, \dotsc, \overset{\wedge_{i}}{(x_{0}^{(k)}, \dotsc, x_{N}^{(k)})} ]).
\]
One checks that, in both cases, the glueings are the same, so that \(V=(\P^{N})^{(k)}\setminus D_{\infty}\) does indeed identify with \(J_{k}\P^{N}\).
\end{proof}

As one may expect, the projective variety \((\P^{N})^{(k)}\) embeds inside the weighted projective space
\[
\P(\underbrace{1, \dotsc,1}_{\times (N+1)}, \underbrace{2, \dotsc,2}_{\times \frac{N(N+1)}{2}}, \dotsc, \underbrace{k+1, \dotsc, k+1}_{\times \frac{N(N+1)}{2}(N+1)^{k-1}})
\]
via the generators of \((V^{\Diff})^{(k)}\). Accordingly, understanding the relations between the generators is of particular interest in order to better apprehend the compactification \((\P^{N})^{(k)}\). As mentioned in the Introduction, this is the object of an ongoing work.
\begin{remark}
Note that we already know that the compactification is not too bad, as \((V^{\Diff})^{(k)}\) is factorial: see Corollary \ref{cor: factoriality}.
\end{remark}

\bibliographystyle{alpha}
\bibliography{diff-homogeneous}

 \end{document}